\numberwithin{equation}{section}
\numberwithin{table}{section}
\numberwithin{figure}{section}
\setlist[enumerate,1]{label={\rm(\arabic*)}, ref={\rm\arabic*}} 
\newtheorem{theorem}[equation]{Theorem}
\newtheorem{proposition}[equation]{Proposition}
\newtheorem{lemma}[equation]{Lemma}
\theoremstyle{definition}
\newtheorem{definition}[equation]{Definition}
\theoremstyle{remark}
\newtheorem{remark}[equation]{Remark}
\newtheorem{example}[equation]{Example}
\newcommand{\sC}{\mathcal{C}}
\newcommand{\sF}{\mathcal{F}}
\newcommand{\sG}{\mathcal{G}}
\newcommand{\sO}{\mathcal{O}}
\newcommand{\sP}{\mathcal{P}}
\renewcommand{\AA}{\mathbb{A}}
\newcommand{\PP}{\mathbb{P}}
\newcommand{\QQ}{\mathbb{Q}}
\newcommand{\ZZ}{\mathbb{Z}}
\newcommand{\NN}{\mathbb{N}}
\newcommand{\RR}{\mathbb{R}}
\newcommand{\lpt}{\mathfrak{S}}
\newcommand{\lc}{\bm{[}}
\newcommand{\rc}{\bm{]}}
\newcommand{\ro}{\bm{)}}
\newcommand{\lo}{\bm{(}}
\newcommand{\chiloc}{\chi_\mathrm{loc}}
\DeclareMathOperator{\Spec}{Spec}
\DeclareMathOperator{\Conv}{Conv}
\DeclareMathOperator{\Pos}{Pos}
\DeclareMathOperator{\vol}{vol}
\DeclareMathOperator{\ord}{ord}
\DeclareMathOperator{\Hom}{Hom}
\DeclareMathOperator{\SL}{SL}
\renewcommand{\H}{\operatorname{H}}
\renewcommand{\k}{\mathbf{k}}
\newcommand{\lra}{\longrightarrow}
\newcommand{\supth}[1]{\ensuremath{#1^{\mathrm{th}}}}
\newcommand{\supst}[1]{\ensuremath{#1^{\mathrm{st}}}}
\title{Local Euler characteristics of $\boldsymbol{A_n}$-singularities and their application to hyperbolicity}
\author{Nils Bruin}
\address{Department of Mathematics, Simon Fraser University, Burnaby, BC, Canada V5A 1S6}
\email{nbruin@sfu.ca}
\author{Nathan Ilten}
\address{Department of Mathematics, Simon Fraser University, Burnaby, BC, Canada V5A 1S6}
\email{nilten@sfu.ca}
\author{Zhe Xu}
\address{Department of Mathematics, University of Oregon, Eugene, OR 97403, USA}
\email{zhxu@uoregon.edu}
\begin{document}

%%%%%%%%%%%%%%%%%%%%%%%%%%%%%%%
% Title page
%%%%%%%%%%%%%%%%%%%%%%%%%%%%%%%

\maketitle

\begin{prelims}

\DisplayAbstractInEnglish

\bigskip

\DisplayKeyWords

\medskip

\DisplayMSCclass

\end{prelims}

%%%%%%%%%%%%%%%%%%%%%
% Table of Contents
%%%%%%%%%%%%%%%%%%%%%

\newpage

\setcounter{tocdepth}{1}

\tableofcontents

%%%%%%%%%%%%%%%%%%%%%
% Content begins here
%%%%%%%%%%%%%%%%%%%%%

\section{Introduction}
\subsection{Motivation and local Euler characteristic}\label{sec:intro}
We present a full analysis of the local Euler characteristic of the cotangent sheaf at a surface singularity of type $A_n$. Our main motivation is its application to a method for proving that certain surfaces are algebraically quasi-hyperbolic by showing that sufficiently high symmetric powers of the cotangent sheaf have global sections.

Let $Y$ be a non-singular projective surface over a field $\k$ of characteristic $0$. For simplicity we assume that $\k$ is algebraically closed. We say that $Y$ is \emph{algebraically quasi-hyperbolic} if it contains only finitely many curves of genus $0$ and $1$. Coskun and Riedl \cite{CoskunRiedl2023} proved that very general surfaces in $\PP^3$ of degree $d\geq 5$ are algebraically hyperbolic, which is a property that implies they are algebraically quasi-hyperbolic as well. However, no surface defined over a number field is very general, so for many specific surfaces, the question about their quasi-hyperbolicity remains open.

For surfaces of general type, Bogomolov \cite{Bogomolov} shows that if the cotangent bundle on $Y$ is big, then $Y$ is algebraically quasi-hyperbolic. 

The cotangent bundle of a non-singular surface $X\subset\PP^3$ is never big.
However, Bogomolov and de Oliveira \cite{BO} observed that the resolution $\phi\colon Y\to X$ of a normal surface $X$ may have a big cotangent bundle if~$X$ has sufficiently many singularities for its degree. One way to see this is by considering the $\supth{m}$ symmetric power $\sF=S^m\Omega_Y$ of the cotangent sheaf on $Y$. This is a vector bundle, and in particular reflexive. We take the direct image of $\sF$ on $X$ and take its reflexive hull $\sF'$. We have $\sF'=\hat{S}^m \Omega_X$, where $\hat{S}^m\Omega_X$ denotes the reflexive hull of $S^m\Omega_X$. 

As Blache \cite[Section~3.9]{Blache} shows, if the singular locus $S$ of $X$ consists of ADE-singularities, then \emph{local Euler characteristics} as defined by Wahl in~\cite{Wahl1976} can be used to express the difference in Euler characteristics as a sum of local contributions at the singularities $s\in S$ as
\begin{equation}\label{eqn:blache}
	\chi(X,{\sF'})=\chi(Y,\sF)+\sum_{s\in S} \chiloc(s,\sF),
\end{equation}
where $\chiloc(s,\sF)$ is defined as follows.
For a sufficiently small open affine neighbourhood $X^\circ$ of $s$, together with $Y^\circ=\phi^{-1}X^\circ$ and $E_s=\phi^{-1}(s)$, we set
\[
\begin{aligned}
\chiloc(s,\sF)&=\chi^0(s,\sF)+\chi^1(s,\sF), \text{ where }\\
\chi^0(s,\sF)&=\dim \H^0(Y^\circ-E_s,\sF)/\H^0(Y^\circ,\sF)\text{ and}\\
\chi^1(s,\sF)&=\dim \H^1(Y^\circ,\sF).
\end{aligned}
\]
We note that $\H^0(Y^\circ-E_s,\sF)\simeq \H^0(X^\circ-s,\sF')$ and that, thanks to reflexivity, $\H^0(X^\circ-s,\sF')\simeq \H^0(X^\circ,\sF')$. Hence, $\chi^0(s, S^m\Omega_Y)$ gives a bound on the number of conditions that sections of $\sF'$ need to satisfy to extend to regular sections on $E_s$ upon pull-back. Globally, this yields
\begin{equation*}
h^0(Y,S^m\Omega_Y)\geq h^0(X,\hat{S}^m \Omega_X)-\sum_s \chi^0(s, S^m\Omega_Y).
\end{equation*}
For $Y$ of general type, we have $h^2(X,\hat{S}^m\Omega_X)=0$ for $m>2$ by \cite[Proposition~2.3]{BO}, which implies that $h^0(X,\hat{S}^m \Omega_X)\geq \chi(X,\hat{S}^m\Omega_X)$ for $m\geq 3$. We obtain
\begin{equation}
\label{eq:bound_chi1}
h^0(Y,S^m\Omega_Y)\geq \chi(Y,S^m\Omega_Y)+\sum_s \chi^1(s,S^m\Omega_Y)\quad\text{for }m\geq 3.
\end{equation}
By \cite[Proposition~3.7]{BTA2022} we have $\chi^1(s,S^m\Omega_Y)=\frac{4}{27}m^3+O(m)$ for an $A_1$-singularity. It follows (see \cite[Example~4.2 and Remark~4.3]{BTA2022}) that a surface $X\subset \PP^3$ of degree $d\geq 5$ with $r>\frac{9}{4}(2d^2-5d)$ singularities of type $A_1$ has a big cotangent bundle.

\subsection{Local Euler characteristics at $\boldsymbol{A_n}$-singularities}

We compute the local Euler characteristic and its components for an $A_n$-singularity $s_n$. Specifically, we prove the following in Section~\ref{proof:chiloc}.

\begin{theorem}\label{T:chiloc}
	For an $A_n$-singularity $s_n$ on a surface $X$ with minimal resolution $Y\to X$, we have
	\[
	\chiloc(s_n,S^m\Omega^1_Y)=\frac{(n+1)^2-1}{(n+1)}\left(\frac{1}{6}m^3+\frac{1}{2}m^2+\frac{1}{4}m\right)+\frac{b_{n}(m)}{4(n+1)}\cdot m+\frac{c_n(m)}{12(n+1)}, 
	\]
	where
	\begin{align*}
		b_n(m)&=\begin{cases}
			0 &  n\ \textrm{even},\\
			1 &  n\ \textrm{odd}, q\ \textrm{even},\\
			-1 &  n\ \textrm{odd}, q\ \textrm{odd},\\
		\end{cases}
		\\
		c_n(m)&=\begin{cases}
			2q^3-3(n-1)q^2+(n^2-4n-2)q &n\ \textrm{even}, q\ \textrm{even},\\
			2q^3-3(n-1)q^2+(n^2-4n-2)q-3(n+1) &n\ \textrm{even}, q\ \textrm{odd},\\
			2q^3-3(n-1)q^2+(n^2-4n-5)q &n\ \textrm{odd}, q\ \textrm{even},\\
			2q^3-3(n-1)q^2+(n^2-4n+1)q-3(n+1) &n\ \textrm{odd}, q\ \textrm{odd},\\
		\end{cases}
	\end{align*}
	and $q$ is the remainder of $m$ divided by $n+1$.
\end{theorem}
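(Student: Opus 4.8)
The plan is to compute $\chiloc(s_n, S^m\Omega^1_Y)$ via toric geometry, exploiting the fact that an $A_n$-singularity is the cyclic quotient singularity $\AA^2/\mu_{n+1}$ (acting with weights $1, n$), and that its minimal resolution $Y^\circ \to X^\circ$ is a toric morphism. First I would set up the toric picture: the resolution $Y^\circ$ is given by a chain of $n$ exceptional $\PP^1$'s, and the cotangent sheaf $\Omega^1_{Y^\circ}$, together with all its symmetric powers $S^m\Omega^1_{Y^\circ}$, are equivariant (toric) reflexive sheaves. This means their cohomology is $\ZZ^2$-graded, and in each degree $\H^0(Y^\circ - E, \cdot)$, $\H^0(Y^\circ, \cdot)$, and $\H^1(Y^\circ, \cdot)$ are computed by the combinatorics of the fan — concretely, the sections of a toric vector bundle in a given multidegree are controlled by which cones of the fan "see" that lattice point, encoded in a filtration on the fiber. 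The quantities $\chi^0$ and $\chi^1$ then become counts of lattice points lying in certain (generally non-convex) regions of $\ZZ^2$, cut out by the chain of rays in the fan and the Euler-sequence description of $S^m\Omega^1$.

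Next I would make these lattice-point counts explicit. Using the Euler sequence $0 \to \Omega^1_Y \to \sO_Y(-1)^{\oplus 2} \to \sO_Y \to 0$ — or rather its toric avatar, since the resolution is a chain of Hirzebruch-type charts — one gets a description of $S^m\Omega^1$ in each chart, and the transition data along the chain of $\PP^1$'s reduces the computation to counting integer points $(a,b)$ subject to a system of inequalities indexed by the successive rays $v_0, v_1, \dots, v_{n+1}$ of the resolution fan. The key structural point is that this region is a union of simplices (one cluster of inequalities per maximal cone), and the local Euler characteristic is an alternating sum over cones, so there is substantial cancellation. Carrying out the count, one obtains a function of $m$ that is naturally a sum of a "leading" polynomial part — the genuinely polynomial-in-$m$ contribution, which should produce the $\frac{(n+1)^2-1}{n+1}\left(\frac{1}{6}m^3 + \frac{1}{2}m^2 + \frac{1}{4}m\right)$ term by a volume/Ehrhart computation — and a periodic correction depending only on the residue $q$ of $m$ modulo $n+1$, which gives the $b_n(m)$ and $c_n(m)$ pieces.

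The main obstacle I expect is the bookkeeping of the periodic correction terms $b_n$ and $c_n$, and in particular the fourfold case split on the parity of $n$ and of $q$. Since the $\mu_{n+1}$-action has weights $1$ and $n$, the relevant lattice is $\ZZ^2 + \frac{1}{n+1}\ZZ\cdot(1,n)$, and the lattice-point count in the non-convex region interacts with this sublattice structure in a way that is genuinely sensitive to whether $n+1$ is even or odd (equivalently $n$ even/odd) and whether $q$ is even or odd — the "half-integer" points along the diagonal of the resolution chain contribute differently. I would handle this by first treating the even-$n$ case, where the quotient lattice has odd order and fewer coincidences occur, then carefully track the extra central lattice point(s) in the odd-$n$ case. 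The cubic and quadratic coefficients in $m$ should be robust (they come from Ehrhart leading terms and are parity-insensitive), so the delicate part is isolating the constant-in-$m$ and linear-in-$m$ periodic pieces, matching them against the closed forms claimed for $b_n(m)$ and $c_n(m)$, and verifying consistency at the "glueing" values $q = 0$ and $q = n$. A sanity check via $\H^1$-only contributions (the $\chi^1$ piece) against known small-$n$ computations, e.g. the $A_1$ and $A_2$ cases, would confirm the normalization of the periodic terms.
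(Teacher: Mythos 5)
There is a genuine gap: your proposal sets up the same general framework the paper starts from (torus-equivariant reflexive sheaves, $\ZZ^2$-graded cohomology via Klyachko filtrations, Ehrhart-type lattice-point counting), but it never supplies a mechanism that actually produces the stated quasi-polynomial. The step ``carrying out the count, one obtains\dots the $\frac{(n+1)^2-1}{n+1}(\tfrac16 m^3+\tfrac12 m^2+\tfrac14 m)$ term\dots and a periodic correction\dots which gives the $b_n(m)$ and $c_n(m)$ pieces'' is an assertion of the theorem, not a derivation; the entire difficulty is exactly the closed-form evaluation of that count, and ``careful bookkeeping split by parity'' is not an argument. In the paper this is done by a specific chain of reductions with no analogue in your plan: a recursion $\chi(n,m)-\chi(n-1,m)=\delta_n(m)$ obtained by comparing two \emph{complete} toric surfaces with an $A_n$- resp.\ $A_{n-1}$-singularity sharing the same minimal resolution (Lemma~\ref{lemma:induct}, via Blache's formula \eqref{eqn:blache}); an explicit piecewise-linear formula for $\delta_n(m,u)$ and its rewriting as a weighted lattice-point count over half-open triangles using unimodular scissor moves (Lemmas~\ref{lemma:delta}--\ref{lemma:Q}, Theorem~\ref{thm:delta}); and the generating function $\sum_m \chi(n,m)z^m = z\bigl((n+1)(1+\cdots+z^n)^2-(1+z^2+\cdots+z^{2n})\bigr)/\bigl((1-z)^2(1-z^{n+1})^2\bigr)$ with an explicit coefficient extraction (Lemmas~\ref{lemma:gen}, \ref{lemma:coeff}), which is where the fourfold parity structure of $b_n,c_n$ actually comes from.

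There are also two technical problems with the route as you describe it. First, $\chiloc$ is not a naive ``alternating sum over cones'' of the non-complete resolution fan: on $Y^\circ$ the spaces $\H^0(Y^\circ,S^m\Omega)_u$ are nonzero for infinitely many $u$, $\chi^0$ is a dimension of a quotient, and $\chi^1(s,\sF)_u$ is the cohomology of Klyachko's complex in degree one, not an alternating sum of its terms; so ``$\chi^0$ and $\chi^1$ become lattice-point counts'' needs an actual argument for the $\chi^1$ part (the paper sidesteps computing $\H^1$ graded pieces altogether by globalizing to complete toric surfaces, where Proposition~\ref{prop:chiu} reduces everything to dimensions of the $W^\sigma(u)$). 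Second, the Euler sequence you invoke, $0\to\Omega^1_Y\to\sO_Y(-1)^{\oplus 2}\to\sO_Y\to 0$, is the sequence for projective space and does not hold on the resolution $Y$; its toric analogue involves $\bigoplus_\rho \sO_Y(-D_{\rho})$ and the class group, and in any case the needed filtration data for $S^m\Omega$ is obtained directly (Example~\ref{ex:Scotangent}), so this ingredient as stated would fail. To turn your outline into a proof you would need either the paper's recursion-plus-generating-function argument or an equally explicit evaluation of the graded $\chi^0$ and $\chi^1$ on the resolution fan, including the $\H^1$ computation you currently treat as automatic.
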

We also compute $\chi^0(s_n,S^m\Omega^1_Y)$ as a sum of lattice point counts in rational polytopes. In order to formulate the result, we need to define some vertices. Consider
\[\begin{aligned}
	P_i&=\left(-\frac{1}{i+1},0,0\right), \quad
	Q_i=\left(-\frac{2}{(i+1)(i+2)},-\frac{i}{i+2},\frac{i}{i+2}\right)\quad\text{for } i=0,1,\ldots,\\
	Z&=(0,-1,0),\\
	P_n'&=\left(\frac{1}{n+1},-1,0\right),\quad
	Q_n'=\left(\frac{2}{(n+1)(n+2)},-1,\frac{n}{n+2}\right).
\end{aligned}\]
Writing $\Conv V$ for the convex hull of a set $V$, we consider the half-open convex polytopes
\begin{equation}\label{E:polyhedra}
	\begin{aligned}
		\sP_i&=\Conv\{P_{i-1},Q_{i-1},P_i,Q_i,Z\}\setminus \Conv\{P_i,Q_i,Z\}\setminus \Conv \{P_{i-1},P_i,Z\},\\
		\sC_n&=\Conv\{P_n,P'_n,Q_n,Q'_n,Z\}\setminus\Conv\{P_n,P_n',Z\}.
	\end{aligned}
\end{equation}
For a polytope $\sP$ we consider the Ehrhart function counting integral point in dilations of the polytope $\sP$,
\[L(\sP,t)=\#\left(t\sP \cap \ZZ^3\right)\quad\text{for }t=0,1,2,\ldots.\]
For a convex polytope spanned by vertices with rational coordinates, this function is a \emph{quasi-polynomial}. The same holds for a non-convex polytope. In Section~\ref{S:proof_chi0} we prove the following.
\begin{theorem}\label{T:chi0}
	Let $Y\to X$ be the minimal resolution of a surface singularity $s_n\in X$ of type $A_n$. Then
	\[\chi^0\left(s_n,S^m\Omega^1_Y\right)=L(\sC_n,m+1)+2\sum_{i=1}^n L(\sP_i,m+1),\]
	which is the lattice point count of the dilations of a half-open non-convex polytope with volume
	\[\vol \sC_n+2\sum_{i=1}^n\vol \sP_i.\]
\end{theorem}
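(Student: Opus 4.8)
The plan is to reduce to the local toric model of the minimal resolution of an $A_n$-singularity and to compute the two relevant cohomology groups of the toric vector bundle $S^m\Omega^1_Y$ one $M$-weight at a time. Concretely, the germ of an $A_n$-singularity is the affine toric surface $U_\sigma$ for $\sigma=\RR_{\geq 0}(1,0)+\RR_{\geq 0}(1,n+1)$ in $N_\RR=\RR^2$, and its minimal resolution $Y^\circ$ is the smooth toric surface whose fan is the subdivision of $\sigma$ obtained by inserting the rays $\rho_i=\RR_{\geq 0}(1,i)$, $i=1,\dots,n$, with exceptional curves $E_i=V(\rho_i)$. Writing $\rho_0,\rho_{n+1}$ for the two boundary rays of $\sigma$, one checks that $Y^\circ\setminus E_s$ is the open toric subvariety $U_{\rho_0}\cup U_{\rho_{n+1}}$, identified under $\phi$ with $X^\circ\setminus\{s\}$ compatibly with $\Omega^1$.

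Next I would express both cohomology groups in terms of binary forms. Working either directly in the $n+1$ affine charts $\cong\AA^2$ of $Y^\circ$ or through Klyachko's description of $S^m\Omega^1_Y$ as a toric vector bundle --- its fibre is $S^m(M\otimes\k)$, and along a ray $\rho$ with primitive generator $u_\rho$ a cotangent vector $v\in M\otimes\k$ has filtration level $0$ when $\langle v,u_\rho\rangle=0$ and $-1$ otherwise, which induces the filtration on the symmetric power --- both $\H^0(Y^\circ,S^m\Omega^1_Y)$ and $\H^0(Y^\circ\setminus E_s,S^m\Omega^1_Y)$ are $M$-graded, and the weight-$w$ piece is an intersection of filtration steps over all $n+2$ rays, respectively over only $\rho_0$ and $\rho_{n+1}$. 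Identifying $S^m(M\otimes\k)$ with the degree-$m$ forms on $\PP^1=\PP(M\otimes\k)$ and letting $\xi_k=\PP(u_k^\perp)$ be the $n+2$ pairwise distinct points cut out by the rays, the weight-$w$ piece along $\rho_k$ becomes the forms vanishing at $\xi_k$ to order at least $c_k(w):=\max\{0,m-\langle w,u_k\rangle\}$. Hence $\H^0(Y^\circ\setminus E_s,S^m\Omega^1_Y)_w$ is the space $V^{\mathrm{out}}_w$ of degree-$m$ forms with vanishing $\geq c_0(w)$ at $\xi_0$ and $\geq c_{n+1}(w)$ at $\xi_{n+1}$, while $\H^0(Y^\circ,S^m\Omega^1_Y)_w=V^{\mathrm{in}}_w\subseteq V^{\mathrm{out}}_w$ additionally imposes the conditions at $\xi_1,\dots,\xi_n$, and
\[\chi^0(s_n,S^m\Omega^1_Y)=\sum_{w\in M}\dim\bigl(V^{\mathrm{out}}_w/V^{\mathrm{in}}_w\bigr),\]
where a direct count for binary forms with assigned zeros gives $\dim(V^{\mathrm{out}}_w/V^{\mathrm{in}}_w)=\min\bigl(\sum_{i=1}^n c_i(w),\ \max\{0,m+1-c_0(w)-c_{n+1}(w)\}\bigr)$, and only weights with $\langle w,u_0\rangle,\langle w,u_{n+1}\rangle\geq 0$ contribute.

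The remaining step is to repackage this sum as Ehrhart functions. I would filter $V^{\mathrm{out}}_w\supseteq\cdots\supseteq V^{\mathrm{in}}_w$ by imposing the vanishing conditions at $\xi_1,\dots,\xi_n$ successively; the associated graded splits as a sum over $i=1,\dots,n$ of truncated jet spaces at $\xi_i$, whose basis is indexed by a jet order $a$ in a range controlled by $c_i(w)$. Recording the triple $(p,q,a)$ with $w=(p,q)$ and substituting $\langle w,u_k\rangle=p+kq$, the inequalities $\langle w,u_0\rangle\geq 0$, $\langle w,u_{n+1}\rangle\geq 0$, $0\leq a<c_i(w)$, $\sum_k c_k(w)\leq m+1$ and their relevant negations cut out finitely many rational cones in $\RR^3$ whose slices at height $m+1$ are precisely the dilates $(m+1)\sP_i$ and $(m+1)\sC_n$: passing from the global coordinates $(p,q)$ to those natural to each of the two maximal cones flanking $\rho_i$ accounts for the two copies of $\sP_i$, while the two ends $\rho_0,\rho_{n+1}$ account for $\sC_n$; the strict inequalities --- a form being regular on a chart rather than lying on a coordinate hyperplane there, equivalently the jet order lying strictly below $c_i(w)$ --- are exactly what the half-open cuts $\setminus\Conv\{\dots\}$ record, and the extremal weights produce the vertices $P_i$, $Q_i$, $Z$, $P'_n$, $Q'_n$. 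The volume assertion is then automatic, since the Ehrhart quasi-polynomial of a rational (possibly non-convex) polytope has leading coefficient equal to its volume and lattice-point counts add over the disjoint half-open pieces; as a check, for $A_1$ and $m=2$ this recipe gives $\chi^0=3$, consistent with Theorem~\ref{T:chiloc}.

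I expect this last step to be the main obstacle: matching the piecewise-linear regions of $\dim(V^{\mathrm{out}}_w/V^{\mathrm{in}}_w)$ to the explicit vertex and half-open data of $\sP_i$ and $\sC_n$, pinning down the $m\mapsto m+1$ shift and the strict-versus-weak facet structure, and verifying that the cones tile with multiplicity $2$ on each $\sP_i$ and $1$ on $\sC_n$ with no over- or under-counting.
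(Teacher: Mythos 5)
Your reduction is correct as far as it goes, and it is in fact the paper's own route: the toric model of the resolution, Klyachko's filtrations, the identification of the weight-$w$ pieces of $\H^0(Y^\circ\setminus E_s,S^m\Omega^1_Y)$ and $\H^0(Y^\circ,S^m\Omega^1_Y)$ with spaces of degree-$m$ binary forms with prescribed vanishing at the $n+2$ points cut out by the rays, and your resulting formula $\dim(V^{\mathrm{out}}_w/V^{\mathrm{in}}_w)=\min\bigl\{\sum_{i=1}^n c_i(w),\,\max\{0,\,m+1-c_0(w)-c_{n+1}(w)\}\bigr\}$ is exactly the paper's piecewise-linear function $z_m(u)$ (Lemma \ref{lemma:intersections} and the display before Lemma \ref{L:chi_0_as_pointcount}). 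The genuine gap is the final step, which is where the content of Theorem \ref{T:chi0} actually lies: you never prove that summing this function over $M$ gives $L(\sC_n,m+1)+2\sum_{i=1}^n L(\sP_i,m+1)$ for the specific half-open polytopes of \eqref{E:polyhedra}, with the vertices $P_i,Q_i,Z,P_n',Q_n'$ and the prescribed removed faces. You assert that the cones produced by your successive-jet stratification have height-$(m+1)$ slices ``precisely'' these dilates, and then concede that verifying the vertices, the half-open facets, the $m\mapsto m+1$ dilation and the multiplicity-two tiling is ``the main obstacle''. That verification is not bookkeeping to be deferred; it is the statement being proved, and without it you have only re-derived the intermediate weight count.

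Two specific points make me doubt the sketch would close as described. First, the dilation structure is not automatic: in the original weight coordinates the region under the graph of $z_m$ is not the $(m+1)$st dilate of a fixed body; the paper must first shift coordinates via $(u_1,u_2)=(a,b+1)$ so that the clamp functions become homogeneous, $\lambda'_{m+1}((m+1)i)=(m+1)\lambda'_1(i)$, and only then is $\sG_{m,n}=\{0<z\le z_m(a,b)\}$ equal to $(m+1)\sG_{0,n}$ (Lemmas \ref{L:chi_0_as_pointcount} and \ref{L:G_0n_convex_decomposition}); your proposal never identifies this shift. Second, your explanation of the factor $2$ --- ``the two maximal cones flanking $\rho_i$'' --- does not match how the two copies of $\sP_i$ actually arise: in the decomposition $\sG_{0,n}=\sC_n\cup\bigcup_i\sP_i\cup\bigcup_i\tau_n(\sP_i)$ all the unprimed $\sP_i$ lie in $\{a\le 0\}$, and the second family is the image of the first under the single lattice involution $\tau_n\colon(a,b)\mapsto(-a,(n+1)a+b)$ in $\GL(2,\ZZ)$, which exchanges the end rays $\rho_0,\rho_{n+1}$ and preserves the cap $\sC_n$; it is because $\tau_n$ preserves $\ZZ^3$ that each $L(\sP_i,m+1)$ is counted twice. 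To complete the argument you would need either to find and exploit this symmetry, as the paper does, or to carry out your jet stratification explicitly and match it facet by facet (including which faces are removed) against \eqref{E:polyhedra}; as written, neither is done. The volume assertion is indeed immediate from Ehrhart theory once the decomposition is in place, so that part of your sketch is fine.
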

\noindent See the appendix  for the generating functions of  $L(\sP_n,m+1)$ and $L(\sC_n,m+1)$ for small values of $n$.

By considering the non-convex polytope as $n\to\infty$, we obtain some extra information; see Section~\ref{S:proof_chi0_asymptotic} for the proof.

\begin{proposition}\label{P:chi0_asymptotic}\leavevmode
	\begin{enumerate}
		\item The function $\chi^0(s_n,S^m\Omega^1_Y)$ is non-decreasing  both in $n$ and in $m$.
		\item The function $\chi^0(s_n,S^m\Omega^1_Y)$ is constant in $n$ for $n>m$.
		\item For any fixed $n$ we have $\chi^0(s_n,S^m\Omega^1_Y)\leq (\frac{2}{9}\pi^2-2)(m+1)^3+O(m^2)$, where $\pi$ denotes Archimedes' constant, so $\frac{2}{9}\pi^2-2\approx0.1932$.
		
	\end{enumerate}
\end{proposition}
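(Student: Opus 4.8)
The plan is to work directly with the non-convex polytope description from Theorem~\ref{T:chi0}. Write $\sD_n = \sC_n \cup \bigcup_{i=1}^n \sP_i$ (with the second copy of each $\sP_i$ accounted for by the factor $2$), so that $\chi^0(s_n, S^m\Omega^1_Y)$ is the lattice point count $L(\sD_n, m+1)$ plus the contribution of the extra copies of the $\sP_i$. The key geometric observation to establish first is that the polytopes $\sP_i$ and $\sC_n$ are \emph{nested and disjoint} in a suitable sense: the vertices $P_i$ converge monotonically to $P_\infty = (0,0,0) = $ (a point on the segment through $Z$), and $Q_i$ converge to $Q_\infty = (0,-1,1)$, so that as $i$ grows the slabs $\sP_i$ shrink toward the plane $\{x=0\}$ and accumulate there. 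For part (2), I would show that for $n > m$, dilating by $m+1$ is too small to see the ``new'' pieces $\sP_i$ with $i$ close to $n$: more precisely, since $P_{i-1}, P_i, Q_{i-1}, Q_i$ all have first coordinate of absolute value $\le \tfrac{1}{i}$, the dilate $(m+1)\sP_i$ for $i > m$ contains no lattice point with nonzero first coordinate, and the half-open construction (removing $\Conv\{P_i,Q_i,Z\}$ and $\Conv\{P_{i-1},P_i,Z\}$) is precisely designed so that the $x=0$ slice contributes nothing. Hence $L(\sP_i, m+1) = 0$ for $i > m$, and likewise the transition $\sP_n \mapsto \sC_n$ does not affect the count once $n > m$ — giving both stabilization in $n$ and, by comparing $\sD_n \subset \sD_{n+1}$ up to the half-open boundary, monotonicity in $n$. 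Monotonicity in $m$ is the standard fact that Ehrhart functions of (half-open) polytopes containing the origin in their closure are non-decreasing, which one checks by exhibiting an injection $t\sD_n \cap \ZZ^3 \hookrightarrow (t+1)\sD_n \cap \ZZ^3$ via a lattice translation in the direction of $Z$.

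For part (3) the plan is a volume estimate: by Theorem~\ref{T:chi0} the leading term of $\chi^0(s_n, S^m\Omega^1_Y)$ in $m$ is $\big(\vol\sC_n + 2\sum_{i=1}^n \vol\sP_i\big)(m+1)^3$, so it suffices to bound $\vol\sC_n + 2\sum_{i=1}^\infty \vol\sP_i$ from above by $\tfrac{2}{9}\pi^2 - 2$, since by part (2) this infinite sum is the relevant bound uniformly in $n$ (for fixed $n$ only finitely many terms are nonzero, and the tail only helps). I would compute $\vol\sP_i$ explicitly: each $\sP_i$ is a difference of a pyramid over the quadrilateral $\{P_{i-1},Q_{i-1},P_i,Q_i\}$ with apex $Z$ minus two tetrahedra, and using the coordinates given, the $x$-extent of $\sP_i$ is the interval between $-\tfrac{1}{i}$ and $-\tfrac{1}{i+1}$ while the cross-sectional area scales like the corresponding $Q$-data; a direct determinant computation should give $\vol \sP_i = \tfrac{1}{3}\big(\tfrac{1}{i} - \tfrac{1}{i+1}\big)\cdot(\text{something like } \tfrac{i}{i+2})$ up to lower-order corrections, so that $2\sum_i \vol\sP_i$ telescopes against a series whose value involves $\sum 1/i^2 = \pi^2/6$. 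The emergence of $\pi^2$ is the signal that the bound is sharp and comes from this $\zeta(2)$ sum.

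The main obstacle I anticipate is the bookkeeping in the volume computation of part (3): the polytopes $\sP_i$ are non-convex (differences of three convex pieces) and live in a two-dimensional affine family of slices, so one must be careful that the subtracted tetrahedra $\Conv\{P_i,Q_i,Z\}$ and $\Conv\{P_{i-1},P_i,Z\}$ are genuinely contained in the big pyramid and meet only along a face — otherwise inclusion–exclusion over-subtracts. Once the per-slice area is pinned down as an exact rational function of $i$, summing over $i$ and recognizing the constant as exactly $\tfrac{2}{9}\pi^2 - 2$ is a matter of evaluating $\sum_{i\ge 1} 1/(i(i+2))$ type series (which are elementary) together with the $\zeta(2)$ contribution; the ``$-2$'' should come from the $\vol\sC_n$ term and the finite telescoping part. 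I would double-check the final constant against the explicit generating functions in Appendix~\ref{appendix} for small $n$ to make sure no factor of $2$ has been dropped.
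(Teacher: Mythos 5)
Your overall strategy (work with the decomposition of Theorem~\ref{T:chi0}, kill the $\sP_i$-contributions for $i>m$ by the smallness of their first coordinates, and get the constant $\tfrac{2}{9}\pi^2-2$ from an exact volume series) is the paper's, but three specific steps do not hold as you state them. First, the set inclusion $\sD_n\subset\sD_{n+1}$ that you use for monotonicity in $n$ is false: $\sC_n$ has a lobe on the positive-$a$ side with vertices $P_n'=(\tfrac{1}{n+1},-1,0)$ and $Q_n'$, and its points with first coordinate in $(\tfrac{1}{n+2},\tfrac{1}{n+1}]$ lie neither in $\sC_{n+1}$ (which is contained in $\{a\le\tfrac{1}{n+2}\}$) nor in any $\sP_i$ (contained in $\{a< 0\}$); for large $m$ the dilate of this protruding region does contain lattice points, so the injection of lattice points you want does not exist. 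The fix, which is what the paper does, is to restrict to the half-space $H=\{a\le 0\}$, where $\sC_{n-1}\cap H\subset(\sP_n\cup\sC_n)\cap H$ does hold, and then use the lattice-preserving symmetry $\tau_n(a,b,z)=(-a,(n+1)a+b,z)$ of the whole configuration to equate the counts on $\{a>0\}$ and $\{a<0\}$ and to check that the $a=0$ slices are nested; you never invoke this symmetry, and without it neither monotonicity in $n$ nor your patch of part (2) goes through. Second, in part (2) the assertion that ``the transition $\sP_n\mapsto\sC_n$ does not affect the count once $n>m$'' is exactly the nontrivial point: for $n>m$ \emph{every} lattice point of $(m+1)\sC_n$ has $a=0$, so the $a=0$ slice is all that matters (your heuristic that the slice contributes nothing is correct for the $\sP_i$ but emphatically not for $\sC_n$), and that slice is a triangle which genuinely grows with $n$; one must argue, as the paper does, that its lattice points satisfy $b\ge -m$ and hence $z\le m-1$, so they are already present for $n\ge m$ and no new ones appear.

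Third, in part (3) the bound you propose to prove, namely $\vol\sC_n+2\sum_{i=1}^{\infty}\vol\sP_i\le\tfrac{2}{9}\pi^2-2$, is literally impossible: with $\vol\sP_i=\tfrac{i^2+3i-2}{6i(i+1)^2(i+2)}$ one has $2\sum_{i\ge 1}\vol\sP_i=\tfrac{2}{9}\pi^2-2$ exactly, and $\vol\sC_n>0$. What you actually need is $\vol\sC_n+2\sum_{i=1}^{n}\vol\sP_i\le 2\sum_{i=1}^{\infty}\vol\sP_i$, i.e.\ $\vol\sC_n\le 2\sum_{i>n}\vol\sP_i$; this is not a matter of ``the tail only helps'' but is precisely the statement that $\vol\sG_{0,n}$ is non-decreasing in $n$ with limit $\tfrac{2}{9}\pi^2-2$ (using $\vol\sC_n\to 0$), which again rests on the half-space-plus-symmetry argument from part (1). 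Once that is supplied, your plan for (3) — exact volumes, partial fractions producing the $\zeta(2)$ term, and the Ehrhart leading-coefficient argument for fixed $n$ — is sound and coincides with the paper's proof; also note for the bookkeeping that the potential ``$-2$'' does not come from $\vol\sC_n$ (which disappears in the limit) but from the telescoping part of the series itself.
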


For the proofs of the above results, we employ tools from toric geometry. Consider the affine variety $X\colon x_1x_2=x_3^{n+1}\subset\AA^3$ with its $A_n$-singularity $s=(0,0,0)$, as well as its minimal resolution $Y\to X$. 
Both $X$ and $Y$ are toric varieties; see Examples \ref{ex:An} and \ref{ex:Anres}.

The reflexive hull of the symmetric powers of the cotangent sheaf on $X$,  and the symmetric powers of the cotangent bundle on $Y$, are torus-equivariant reflexive sheaves. In general, for any equivariant reflexive sheaf $\sF$ on a toric variety $Z$, the equivariant structure provides a grading of the cohomology parametrized by the character lattice $M$ of the maximal torus:
\[\H^i(Z,\sF)=\bigoplus_{u\in M}\H^{i}(Z,\sF)_u.\]
Klyachko \cite{Klyachko1989} gives a very detailed description of these graded pieces in terms of combinatorial data associated to $Z$ and $\sF$; this applies in particular to the sheaves $\hat{S}^m\Omega_X$ and $S^m\Omega_Y$.
We can express the quantities in Theorems~\ref{T:chiloc} and \ref{T:chi0} as sums of graded parts as well.
Using Klyachko's machinery we find that only finitely many of these graded parts are non-trivial and that we can express them as lattice point counts in a non-convex polytope dilated by a factor of $m+1$. For Theorem~\ref{T:chiloc} this expression significantly simplifies through the use of lattice-preserving scissor operations and manipulations of generating functions.

\subsection{Applications to algebraic quasi-hyperbolicity}

Comparing the results from Theorem~\ref{T:chiloc} and Proposition~\ref{P:chi0_asymptotic}, we see that the coefficient of $m^3$ in $\chi^0(s_n,S^m\Omega^1_Y)$ is bounded in $n$, whereas in $\chiloc(s_n,S^m\Omega^1_Y)$ it grows linearly with $n$. As a result, we see that the inequality \eqref{eq:bound_chi1} improves as $n$ grows.

A hypersurface $X\subset\PP^3$ of degree $d\geq 5$ with $r$ singularities of type $A_n$ is of general type. For a minimal desingularization $Y$ of $X$, one can compute $\chi(Y,S^m\Omega^1_Y)$ by Atiyah's observation that this equals $\chi(Z,S^m\Omega^1_Z)$ for a smooth degree $d$ surface $Z\subset\PP^3$, combined with a standard application of Hirzebruch--Riemann--Roch and a Chern class computation. This is outlined in \cite{BO}, and the explicit full formulae are given in \cite[Appendix]{BTA2022}. The leading term is
\begin{equation}
\chi\left(Y,S^m\Omega^1_Y\right)= -\tfrac{1}{3}(2d^2-5d)m^3+O\left(m^2\right).
\end{equation}
By combining Theorems~\ref{T:chiloc} and \ref{T:chi0}, we can compute for any particular $n$ the formula for $\chi^1(s_n,S^m\Omega^1_Y)$ as a quasi-polynomial in $m$ and therefore compute the bound \eqref{eq:bound_chi1} explicitly as a function of $m$ and $r$. It is then a matter of simple algebra to determine a bound $r(d,n)$ such that for $r\geq r(d,n)$ we have that $\chi(Y,S^m\Omega^1_Y)$ is a quasi-polynomial of degree $3$ in $m$ with a positive coefficient for $m^3$. This ensures that for large enough $m$ the sheaf $S^m\Omega^1_Y$ has global sections, guaranteeing the algebraic quasi-hyperbolicity; in fact, it guarantees that the cotangent sheaf is big.
We tabulate some values for $r(d,n)$ for small $d,n$ in Table~\ref{table:rdn}. Code to compute the requisite quasi-polynomials is available; see \cite{BIXanc}.
\begin{table}
\[\begin{array}{c|cccccc}
	&n=1&n=2&n=3&n=4&n=5&n=6\\
	\hline
	d=5&57 & 27 & 18 & 13 & 11 & - \\
	d=6&95 & 46 & 30 & 22 & 18 & 15 \\
	d=7&142 & 68 & 45 & 33 & 27 & 22 \\
	d=8&199 & 95 & 62 & 46 & 37 & 31 \\
	d=9&264 & 126 & 83 & 61 & 49 & 41 \\
	d=10&338 & 162 & 106 & 78 & 62 & 52
\end{array}\]
\caption{Values for $r(d,n)$ for small $d,n$.}
\label{table:rdn}
\end{table}

Miyaoka \cite{Miyaoka1984} shows that a degree $d$ surface has at most $\frac{2}{3}(d-1)^2d(n+1)/(2n+1)$ singularities of type $A_n$. Hence, we see that for $n=1$ the smallest realizable degree would be $d=10$, and indeed Barth's decic surface has $r=345$ singularities of type $A_1$ and therefore has big cotangent bundle. For $n\geq 2$ we see that Miyaoka's bound does not exclude any $d$.

Labs \cite[Corollary~A]{Labs2006}, using a construction attributed to Segre \cite{Segre1952} and generalized by Galliarti \cite{Gallarati1952}, describes surfaces of degree $d=2k$ with $4k^2$ singularities of type $A_{k-1}$. An explicit equation (see Section~\ref{S:labs}) for such surfaces is
\[X_k\colon \xi_0^{2k}+\xi_1^{2k}+\xi_2^{2k}+\xi_3^{2k}
-\xi_0^k\xi_1^k-\xi_0^k\xi_2^k-\xi_0^k\xi_3^k-\xi_1^k\xi_2^k-\xi_1^k\xi_3^k-\xi_2^k\xi_3^k=0.\]
For $k\geq 4$ these surfaces have enough singularities to force the cotangent bundle on their minimal resolutions to be big, and hence these surfaces are algebraically quasi-hyperbolic, as was also found by Weiss; see \cite[Corollary~1.1.17]{WeissPhD}. 

While very general surfaces of degree at least $5$ are algebraically hyperbolic by \cite{CoskunRiedl2023}, no surface defined over a number field is very general. The surface $X_4$ is an explicit degree $8$ surface in $\PP^3$ that is algebraically quasi-hyperbolic. To our knowledge this is the lowest-degree explicit example.

We can in fact prove a little more by computing a regular symmetric differential on $X_k$ for $k\geq 4$; see Section~\ref{S:labs_hyperbolic} for the proof.
\begin{theorem}\label{T:labs_hyperbolic}
	For $k\geq 4$ the surface $X_k$ contains no genus $0$ curves. For $k\geq 5$ the surface $X_k$ contains no curves of genus $0$ or $1$.
\end{theorem}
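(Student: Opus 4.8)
The plan is to upgrade the statement ``finitely many curves of genus $0$ and $1$'' (which already holds for $k\ge4$, since by the table after \eqref{eq:bound_chi1} the cotangent bundle of the minimal resolution $Y\to X_k$ is big, so \cite{Bogomolov} applies) to ``none'', by pairing the Euler-characteristic bound with an explicit symmetric differential whose zero locus can be described. The mechanism is the standard one. A nonzero $\omega\in\H^0(Y,S^m\Omega^1_Y)$ is a section of $\mathcal{O}_{\PP(\Omega^1_Y)}(m)$ with zero divisor $V=V(\omega)$; write $p\colon\PP(\Omega^1_Y)\to Y$. For a non-constant morphism $f\colon C\to Y$ from a smooth projective curve of genus $g$ whose image is not an exceptional $(-2)$-curve, differentiating gives a lift $\tilde{f}\colon C\to\PP(\Omega^1_Y)$ with $\deg\tilde{f}^*\mathcal{O}(1)\le 2g-2$, so if $\tilde{f}(C)\not\subset V$ then $0\le m\deg\tilde{f}^*\mathcal{O}(1)\le m(2g-2)$, forcing $g\ge1$, with $g=1$ possible only when $\deg\tilde{f}^*\mathcal{O}(1)=0$. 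If on the other hand $\tilde{f}(C)\subset V$ and $V$ has no component dominating $Y$, then $V=p^{-1}(B)$ for the curve $B:=p(V)\subset X_k$ and $f(C)\subseteq B$. Hence it suffices to produce, for each $k\ge4$, a symmetric differential $\omega_k$ on $X_k$ whose vanishing locus $V$ has no horizontal component, and then to show that every component of $B=p(V)$ has geometric genus $\ge1$ for $k\ge4$ and $\ge2$ for $k\ge5$.

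To build $\omega_k$ I would first note that a nonzero global section of $\hat{S}^m\Omega_{X_k}$, equivalently a symmetric differential on $Y$ regular away from the exceptional curves, exists for suitable $m$ by \eqref{eq:bound_chi1} together with the numerics tabulated in the introduction, once $k\ge4$. To pin one down I would exploit the large automorphism group of $X_k$: the symmetric group $S_4$ permutes $\xi_0,\dots,\xi_3$, and scaling each $\xi_i$ independently by a $k$th root of unity preserves the equation (only $\xi_i^{2k}$ and $\xi_i^k\xi_j^k$ occur), giving an extra $(\mu_k)^3$ acting on $X_k\subset\PP^3$. Decomposing $\hat{S}^m\Omega_{X_k}$ under this group and using the toric-local description at the $A_{k-1}$-points that underlies Theorems~\ref{T:chiloc} and~\ref{T:chi0}, one can write down an explicit (semi-)invariant section $\omega_k$ and read off that its zero locus lies over a short list of distinguished curves --- I expect these to be the coordinate sections $\{\xi_i=0\}\cap X_k$ together with fibres of the two pencils on $X_k$ pulled back along the degree-$k^3$ cover $X_k\to Q$, $[\xi]\mapsto[\xi^k]$, onto the smooth quadric $Q\colon 3(y_0^2+\cdots+y_3^2)=(y_0+\cdots+y_3)^2$, which is $\cong\PP^1\times\PP^1$.

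Granting this description of $B$, the remaining work is a genus count. Each component of $B$ is a cover of $\PP^1$ (or of a conic) whose branch behaviour is dictated by the $A_{k-1}$-singularities it meets, so its geometric genus is computed by Riemann--Hurwitz and comes out as an explicit increasing function of $k$; the thresholds $k\ge4$ and $k\ge5$ in the statement should be precisely the least $k$ for which all these genera are $\ge1$, respectively $\ge2$. One point requiring care is a curve $C\subset X_k$ passing through singular points: its strict transform on $Y$ meets the exceptional $(-2)$-curves, and the pullback $\tilde{f}^*\omega_k$ acquires a correction supported there, so the genus inequality of the first paragraph must be run with that correction term. Bounding it is exactly what the local description of $\hat{S}^m\Omega_X$ near an $A_n$-singularity from Theorems~\ref{T:chiloc} and~\ref{T:chi0} supplies.

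The main obstacle is the genus-$1$ case, and it is the reason the elliptic conclusion needs $k\ge5$ rather than $k\ge4$: when $g=1$ the first-paragraph inequality only gives $\deg\tilde{f}^*\mathcal{O}(1)=0$, so $\tilde{f}^*\omega_k$ may be a nonzero constant, $\tilde{f}(C)$ need not lie in $V$, and $C$ need not be contained in $B$; closing this gap forces the genus bound on the components of $B$ up from $\ge1$ to $\ge2$, and, if $V$ should after all have a horizontal component, also requires bounding the elliptic leaves of the resulting multifoliation --- cleanest via a second independent symmetric differential on $X_k$ whose vanishing locus one intersects with $V$, or else via Bogomolov's argument \cite{Bogomolov}. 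Handling this, along with the bookkeeping of the correction terms at the $A_{k-1}$-points, is the technical heart of the proof; by contrast, exhibiting $\omega_k$ and computing the genera of the components of $B$ should be routine once the cover $X_k\to Q$ and the group action are set up.
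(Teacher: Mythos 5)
Your plan has a structural flaw and leaves the two decisive steps unproven. First, the dichotomy you set up cannot be realized: for any nonzero $\omega\in\H^0(Y,S^m\Omega^1_Y)$ with $m\geq 1$, the divisor $V(\omega)\subset\PP(\Omega^1_Y)$ meets every fibre of $p$ in degree $m>0$, so $V$ \emph{always} has a horizontal component; ``produce $\omega_k$ whose vanishing locus has no horizontal component'' is impossible, and the case you relegate to a parenthetical fallback (controlling the leaves of the multifoliation cut out by the horizontal part of $V$) is in fact the whole problem. Second, you never construct $\omega_k$: you only assert that equivariance under $S_4\ltimes\mu_k^3$ together with the Euler-characteristic numerics should yield a section with a predictable zero locus. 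The paper's proof is exactly the missing construction: $X_1$ is a smooth quadric $\cong\PP^1\times\PP^1$, $\phi_k\colon X_k\to X_1$, $(\xi_i)\mapsto(\xi_i^k)$, is finite of degree $k^3$ ramified to order $k$ over the coordinate planes, and one takes $\omega_k=\phi_k^*(ds\,dt)$ twisted to $\tilde\omega_k=(x_1x_2x_3)^{2-k}\omega_k$. Regularity of $\tilde\omega_k$ on the resolution $Y_k$ does not follow from Theorems~\ref{T:chiloc} or~\ref{T:chi0} (those give Euler characteristics, not valuations); it needs the order gain along ramified solution curves (Proposition~\ref{P:ord_solution_curve}) together with the toric bound $\ord_{E_i}\omega\geq\lceil -mn/(n+1)\rceil$ at exceptional divisors over an $A_n$-point (Proposition~\ref{P:ord_bound_quotient_singularity}), plus the extra vanishing along $\xi_0\xi_1\xi_2\xi_3=0$, and this is where the threshold $k\geq 4$ enters. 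The genus $0$ statement then follows because a regular quadratic differential restricts to zero on any rational curve, so genus $0$ curves are solution curves, and the solution curves of $\tilde\omega_k$ are exactly the members of the two pencils pulled back from the rulings (smooth complete intersections of genus $k^3-2k^2+1$) and the Fermat curves in the coordinate planes (genus $(k-1)(k-2)/2$).

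The genus $1$ case, which you yourself flag as open, is not closable by the routes you suggest: Bogomolov's theorem gives finiteness, not emptiness, and no second independent differential is exhibited. The paper's mechanism is different and elementary: for $k\geq 5$ the extra vanishing allows one to multiply $\tilde\omega_k$ by an arbitrary linear form $a_0+a_1x_1+a_2x_2+a_3x_3$ and stay regular on $Y_k$; choosing the plane transversal to a putative elliptic curve $C$ produces a regular quadratic differential on $C$ with zeros, which must vanish identically, so $C$ is a solution curve of $\tilde\omega_k$ --- and those all have genus $\geq 2$. Without this (or an equivalent device) your proposal does not prove either assertion of the theorem.
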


\subsection{Literature}
Bogomolov and de Oliveira \cite{BO} first considered algebraic quasi-hyperbolicity of hypersurfaces with $A_1$-singularities. Due to an error in their computations, they are led to consider an alternative inequality to \eqref{eq:bound_chi1} that is established through Serre duality,
\begin{align}
\label{eq:bound_chi0}h^0\left(Y,S^m\Omega^1_Y\right)&\geq \chi\left(Y,S^m\Omega^1_Y\right)+\sum_s \chi^0\left(s,S^m\Omega^1_Y\right).
\end{align}
Bruin--Thomas--V\'arilly-Alvarado \cite{BTA2022} correct the error and compute $\chi^0(s_1,S^m\Omega^1_Y)$ and $\chi^1(s_1,S^m\Omega^1_Y)$ exactly. They also generalize the results to complete intersection surfaces and give several examples of algebraically quasi-hyperbolic ones.

Using an orbifold approach, Roulleau--Rousseau \cite{RoulleauRousseau} approximate the local Euler characteristic of an $A_n$-singularity $s_n$ by $\chiloc(s_n,S^m\Omega^1_Y)=\frac{n(n+2)}{6(n+1)}m^3+O(m^2)$, consistent with Theorem~\ref{T:chiloc}. They combine equations \eqref{eq:bound_chi1} and \eqref{eq:bound_chi0} to a weaker inequality
\[h^0\left(Y,S^m\Omega^1_Y\right)\geq \chi\left(Y,S^m\Omega^1_Y\right)+\tfrac{1}{2}\sum_s \chiloc\left(s,S^m\Omega^1_Y\right),\]
which allows them to identify examples of degree $d\geq 13$ with sufficient $A_1$-singularities for their bound to imply algebraic quasi-hyperbolicity.
From Proposition~\ref{P:chi0_asymptotic} it follows that \eqref{eq:bound_chi1} gives the stronger result for $n\geq 2$.

De Oliveira--Weiss \cite{OliveiraWeiss2019} consider $A_2$-singularities and reference an approximation to $\chi^0(s_2,S^m\Omega_Y)$ that is consistent with Theorem~\ref{T:chi0}. They also reference \cite{Labs2006} for an example of a degree $9$ surface with sufficiently many $A_2$-singularities to conclude it has big cotangent bundle.
Theorem~\ref{T:chi0} and Proposition~\ref{P:chi0_asymptotic} largely follow the exposition in the third author's master's thesis \cite{XuMSc}. The leading coefficient in $m$ for $\chi^0(s_n,S^m\Omega_Y)$ for $A_n$-singularities is derived independently by Weiss \cite{WeissPhD}, and the top two coefficients are determined independently by Asega--de Oliveira--Weiss \cite{AsegaDeOliveiraWeiss2023}.

Explicit computations with symmetric differentials as in Section~\ref{S:labs_hyperbolic} go back to Vojta \cite{Vojta2000}. See also \cite{BTA2022} for more elaborate examples.

\section{Preliminaries}
\subsection{Toric varieties}
We recall here the necessary basics of toric geometry. See \cite{CLS} for more details. 
Let $N$ be a finitely generated free abelian group with dual $M=\Hom(N,\ZZ)$. Given a pointed polyhedral cone $\sigma\subseteq N\otimes \RR$, its dual is
\[
	\sigma^\vee=\{u\in M\otimes \RR\ |\ \langle v,u\rangle\geq 0\ \forall\ v\in \sigma\}.
\]

Here $\langle v,u \rangle$ is the natural pairing induced by the duality of $N$ and $M$. The semigroup $\sigma^\vee\cap M$ is finitely generated, and
\[
	X_\sigma=\Spec \k[\sigma^\vee\cap M]
\]
is the \emph{affine toric variety} associated to the cone $\sigma$. The dimension of $X_\sigma$ is simply the rank of $N$. The $M$-grading of $\k[\sigma^\vee\cap M]$
 induces an inclusion of the torus
$T=\Spec \k[M]=N\otimes \k$ in $X_\sigma$, with the action of $T$ on itself extending to $X_\sigma$.

\begin{example}[An $A_n$-singularity]\label{ex:An}
	We take $M=N=\ZZ^2$, with $\langle \cdot,\cdot \rangle$ the standard inner product. Let $\sigma_{A_n}$ be the cone generated by $(0,1)$ and $(n+1,1)$. Its dual $\sigma_{A_n}^\vee$ is generated by $(1,0)$ and $(-1,n+1)$. The semigroup $\sigma_{A_n}^\vee\cap M$ is generated by $(1,0)$, $(-1,n+1)$, and $(0,1)$. See Figure \ref{fig:An}. These generators satisfy the relation
	\[
(1,0)+(-1,n+1)=(n+1)\cdot (0,1),
	\]
	so the toric variety $X_{\sigma_{A_n}}$ is isomorphic to the vanishing locus of $x_1x_2-x_3^{n+1}$ in $\AA^3$. This is an isolated surface singularity of type $A_n$.
\end{example}
\begin{figure}
\scriptsize{
\[	\begin{array}{c c}
	\begin{tikzpicture}
		\filldraw[lightgray] (3.9,1.3) -- (0,0) -- (0,1.3);
		\draw[->] (0,0) -- (3.9,1.3);
		\draw[->] (0,0) -- (0,1.3);
\filldraw (0,1) circle [radius=.04];
\filldraw (3,1) circle [radius=.04];
\node[above left] at  (0,1) {$(0,1)$};
\node[below right] at  (3,1) {$(n+1,1)$};
	\end{tikzpicture} &\qquad\qquad
	\begin{tikzpicture}
		\filldraw[lightgray] (1.3,0) -- (0,0) -- (-1.1,3.3);
		\draw[->] (0,0) -- (1.3,0);
		\draw[->] (0,0) -- (-1.1,3.3);
\filldraw (1,0) circle [radius=.04];
\filldraw (-1,3) circle [radius=.04];
\filldraw (0,1) circle [radius=.04];
\node[below left] at  (-1,3) {$(-1,n+1)$};
\node[right] at  (0,1) {$(0,1)$};
\node[below right] at  (1,0) {$(1,0)$};
	\end{tikzpicture}\\
	\sigma_{A_n}&\sigma_{A_n}^\vee
\end{array}
\]
}	\caption{The cone and dual cone for an $A_n$-singularity}\label{fig:An}
\end{figure}
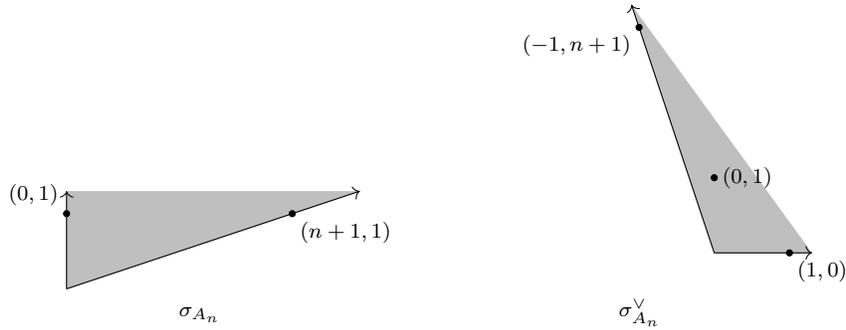

The above construction globalizes. Let $\Sigma$ be a \emph{fan} in $N\otimes \RR$, that is, a collection of pointed polyhedral cones that is closed under taking faces, and such that any two elements intersect in a common face. Any face relation $\tau\prec \sigma$ for $\sigma\in\Sigma$ induces an open inclusion $X_\tau\hookrightarrow X_\sigma$. The toric variety $X_\Sigma$ is constructed by gluing together the affine toric varieties 
\[
	\left\{X_\sigma\right\}_{\sigma\in\Sigma}
\]
along the open immersions induced by face relations; see \cite[Section~3.1]{CLS} for precise details. Moreover, any normal variety $X$ equipped with an effective action of the torus $T$ can be constructed in this fashion; see  \cite[Corollary 3.1.8]{CLS}.

Many aspects of the geometry of $X_\Sigma$ can be read directly from $\Sigma$. For instance, $X_\Sigma$ is non-singular if and only if the fan $\Sigma$ is \emph{smooth}, that is, the primitive lattice generators for each cone in $\Sigma$ can be completed to a basis of $N$; see \cite[Theorem 3.1.19]{CLS}.
For any natural number $i$, let $\Sigma^{(i)}$ be the set of $i$-dimensional cones in $\Sigma$.
Torus-invariant prime divisors on $X_\Sigma$ are in bijection with elements of $\Sigma^{(1)}$; see \cite[Section~4.1]{CLS}.
Given a ray $\rho\in\Sigma^{(1)}$, we denote the corresponding prime divisor by $D_\rho$.
We will denote the primitive lattice generator of the ray $\rho$ by $\nu_\rho$.
The valuation determined by a divisor $D_\rho$ is easily described:
for any ray $\rho\in\Sigma^{(1)}$ and $u\in M$, we have
\begin{equation}\label{eqn:vanishing}
	\ord_{D_\rho}\left(x^u\right)=\langle \nu_\rho,u\rangle, 
\end{equation}
where $x^u$ is the rational function on the torus corresponding to $u$ and $\ord_{D_\rho}(x^u)$ denotes its order of vanishing along $D_\rho$. 

\begin{example}[The minimal resolution of an $A_n$-singularity]\label{ex:Anres}
	Continuing with $M=N=\ZZ^2$, for $i=0,1,\ldots,n+1$ we let $\rho_i$ be the ray in $N\otimes \RR$ generated by $(i,1)$. Consider the fan $\Sigma$ whose $n+1$ top-dimensional cones are generated by $\rho_i,\rho_{i+1}$ for $i=0,\ldots,n$. See Figure \ref{fig:Anres}.

	The fan $\Sigma$ is smooth, so the resulting surface $X_\Sigma$ is non-singular. In fact, the toric variety $X_\Sigma$ is the minimal resolution of the $A_n$-surface singularity from Example \ref{ex:An}. Indeed, the inclusion of each cone of $\Sigma$ in the cone $\sigma_{A_n}$ generated by $\rho_0,\rho_{n+1}$ induces a birational morphism $\phi:X_\Sigma\to X_\sigma$. The morphism $\phi$ is proper since the union of the cones in $\Sigma$ is just $\sigma_{A_n}$; see \cite[Theorem 3.4.11]{CLS}.

	Since the subfan of $\Sigma$ consisting of $\rho_0$, $\rho_{n+1}$, and the origin is the non-singular locus of $X_{\sigma_{A_n}}$, the exceptional locus $E$ of $\phi$ is the union of the prime divisors $E_1=D_{\rho_1}$, \ldots, $E_n=D_{\rho_n}$. Using \textit{e.g.} \cite[Theorem 10.4.4]{CLS} one computes that each $E_i$ is a $(-2)$-curve, so the resolution $\phi$ is indeed minimal.
\end{example}

\begin{figure}
	\scriptsize{
	\begin{tikzpicture}
		\filldraw[lightgray] (7.8,1.3) -- (0,0) -- (0,1.3);
		\filldraw[lightgray] (7.8,1.3) -- (0,0) -- (0,1.3);
		\draw[->] (0,0) -- (7.8,1.3);
		\draw[->] (0,0) -- (0,1.3);
		\draw[->] (0,0) -- (2.6,1.3);
		\draw[->] (0,0) -- (1.3,1.3);
		\draw[->] (0,0) -- (6.5,1.3);
\filldraw (0,1) circle [radius=.04];
\filldraw (6,1) circle [radius=.04];
\node[above left] at  (0,1) {$(0,1)$};
\node[above right] at  (0,1.3) {$\rho_0$};
\node[above right] at  (1.3,1.3) {$\rho_1$};
\node[above right] at  (2.6,1.3) {$\rho_2$};
\node[above right] at  (6.5,1.3) {$\rho_n$};
\node[above right] at  (7.8,1.3) {$\rho_{n+1}$};
\node[above ] at  (4.5,1.3) {$\cdots\cdots\cdots\cdots$};
\node[below right] at  (6,1) {$(n+1,1)$};
	\end{tikzpicture} 
}
	\caption{The minimal resolution of an $A_n$-singularity}\label{fig:Anres}
\end{figure}
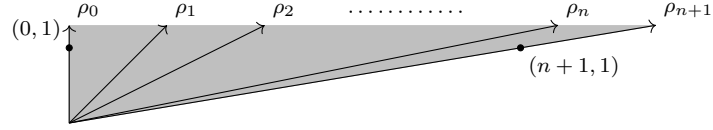

\subsection{Torus-equivariant reflexive sheaves}\label{sec:klyachko}
Let $\sF$ be a $T$-equivariant reflexive sheaf on the toric variety $X_\Sigma$. In \cite{Klyachko1989,klyachko} Klyachko associates a collection of filtrations to $\sF$ as follows. We first set
\[
	V_\sF=\H^0\left(T,\sF_{|T}\right)^T; 
\]
that is, $V_\sF$ is the $\k$-vector space obtained as the $T$-invariant sections of the restriction of $\sF$ to the torus $T$. The restriction of $\sF$ to $T$ is a vector bundle, and $V_\sF$ may be identified with the fibre of this bundle over the identity element of $T$. In particular, it is a vector space of dimension equal to the rank of $\sF$. 

For each ray $\rho\in\Sigma^{(1)}$, we may consider the decreasing $\ZZ$-filtration $V_\sF^\rho$ defined as
\[
	V_\sF^\rho(i)=\{z\in V_\sF\ |\ \ord_{D_\rho}(z) \geq i\}.
\]
As before $\ord_{D_\rho}(z)$ denotes the order of vanishing of a section $z$ along the prime divisor $D_\rho$.
When the sheaf $\sF$ is clear from the context, we will omit the subscript and use the notation $V$ and $V^\rho(i)$.

\begin{example}[The reflexive hull of the cotangent sheaf]\label{ex:cotangent}
	Let $X_\Sigma$ be a toric variety with cotangent sheaf $\Omega=\Omega_{X_\Sigma}$. This bundle has a natural $T$-equivariant structure. The corresponding filtrations for its reflexive hull $\hat\Omega$ are as follows:
	\begin{align*}
V&=M\otimes \k,\\
V^\rho(i)&=\begin{cases}
V & i<0,\\
\ker\left(\nu_\rho\right) \subset V & i=0,\\
0 & i > 0.
\end{cases}
	\end{align*}
	If $X_\Sigma$ is smooth, then $\hat\Omega=\Omega$ and this is just \cite[Section~2.3, Example 5]{Klyachko1989}. For the singular case we note that $\hat\Omega$ agrees with $\Omega$ on the non-singular locus of $X_\Sigma$. Since any toric variety is smooth in codimension~$1$, the filtrations for $\hat\Omega$ agree with the filtrations for the restriction of $\Omega$ to the non-singular locus of $X_\Sigma$, which are exactly the filtrations above.
\end{example}

Let $\sF$ be an equivariant reflexive sheaf on $X_\Sigma$. It is straightforward to describe the filtration data of the reflexive hull of its symmetric powers $\hat S^m\sF$ in terms of the filtration data of $\sF$:
\begin{align*}
	&V_{\hat S^m\sF}=S^m V_{\sF},\\
	&V_{\hat S^m\sF}^\rho(i)=\sum_{j_1+j_2+\ldots+j_m=i}V_{\sF}^\rho(j_1)\cdot \ldots \cdot V_{\sF}^\rho(j_m)\subseteq S^m V_\sF.
\end{align*}
See~\cite[Corollary 3.5]{jose} for the locally free case; the reflexive case follows immediately.

\begin{example}[Symmetric powers of the cotangent sheaf]\label{ex:Scotangent}
	Combining the above with Example \ref{ex:cotangent}, we obtain that for the reflexive sheaf $\hat S^m \Omega$, we have
\begin{align*}
	V_{\hat S^m\Omega}&=S^m (M\otimes \k),\\
		V_{\hat S^m \Omega}^\rho(i)&=\begin{cases}
S^m (M\otimes \k) & i\leq -m,\\
S^{i+m}(\rho^\perp)\cdot S^{-i} (M\otimes \k)  & -m\leq i \leq 0,\\
0 & i \geq  1.
\end{cases}
\end{align*}
\end{example}

For any $T$-equivariant reflexive sheaf $\sF$, $T$ acts on the cohomology groups $\H^p(X_\Sigma,\sF)$, and so these decompose as a direct sum of eigenspaces
\[
	\H^p(X_\Sigma,\sF)=\bigoplus_{u\in M} \H^p(X_\Sigma,\sF)_u.
\]
Global sections are especially easy to describe. For any ray $\rho\in\Sigma$ and $u\in M$, let $\rho(u)=\langle \nu_\rho,u\rangle$.
We have that $\H^0(T,\sF|_T)_u\simeq V_{\sF}$ via $z \mapsto x^{u}z$. 
From $\ord_{D_\rho}(x^{u} z)=\ord_{D_\rho}(z)+\rho(u)$ we obtain
\begin{equation}\label{E:shifted_filtration}
\left\{z \in \H^0(T,\sF|_T)_u \mid \ord_{D_\rho}(z)\geq i \right\}=
x^{-u} V_{\sF}^\rho(i+\rho(u)).
\end{equation}
In particular,
\begin{equation}\label{eqn:global}
	\H^0(X_\Sigma,\sF)_u\cong\bigcap_{\rho\in\Sigma^{(1)}} V_\sF^\rho(\rho(u)).
\end{equation}

Higher cohomology groups of $\sF$ may also be recovered from the filtration data. For $\sigma\in\Sigma$ and $u\in M$, set
\[
	W_\sF^\sigma(u)=V_\sF/\sum_{\rho\in\Sigma^{(1)}\cap \sigma} V_\sF^\rho(\rho(u)).
\]
Klyachko uses these vector spaces to construct a complex
\begin{equation}\label{eqn:cohom}
	0 \lra \bigoplus_{\sigma\in\Sigma^{(0)}} W^\sigma(u) \lra \bigoplus_{\sigma\in\Sigma^{(1)}} W^\sigma(u) \lra \bigoplus_{\sigma\in\Sigma^{(2)}} W^\sigma(u) \lra \cdots
\end{equation}
whose $p$\textsuperscript{th} cohomology may be identified with $\H^p(X_\Sigma,\sF)_u$; see \cite[Theorem 4.1.1]{Klyachko1989}. In particular, we have the following.

\begin{proposition}\label{prop:chiu}
Let $\sF$ be a $T$-equivariant reflexive sheaf on the toric variety $X_\Sigma$. For any $u\in M$ the quantity
\[
	\chi_u(\sF):=\sum_{p\geq 0} (-1)^p \dim \H^p(X_\Sigma,\sF)_u
\]
may be computed as
\[
	\chi_u(\sF)=\sum_{p\geq 0} (-1)^p \sum_{\sigma\in\Sigma^{(p)}} \dim W_\sF^\sigma(u).
\]
\end{proposition}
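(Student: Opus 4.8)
The plan is to establish the identity via the Euler characteristic of the cochain complex \eqref{eqn:cohom}. For a fixed character $u \in M$, Klyachko's theorem \cite[Theorem 4.1.1]{Klyachko1989} identifies $\H^p(X_\Sigma, \sF)_u$ with the $p$\textsuperscript{th} cohomology of the finite cochain complex
\[
	0 \to \bigoplus_{\sigma\in\Sigma^{(0)}} W_\sF^\sigma(u) \to \bigoplus_{\sigma\in\Sigma^{(1)}} W_\sF^\sigma(u) \to \cdots \to \bigoplus_{\sigma\in\Sigma^{(d)}} W_\sF^\sigma(u) \to 0,
\]
where $d$ is the dimension of $X_\Sigma$. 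Each term is a finite-dimensional $\k$-vector space: $W_\sF^\sigma(u)$ is a quotient of $V_\sF$, which is finite-dimensional since $\sF$ is reflexive of finite rank. Hence the complex is a bounded complex of finite-dimensional vector spaces.

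The key step is the standard fact that the alternating sum of dimensions of the cohomology of a bounded complex of finite-dimensional vector spaces equals the alternating sum of dimensions of the terms of the complex. First I would invoke this to write
\[
	\sum_{p\geq 0}(-1)^p \dim \H^p(X_\Sigma,\sF)_u = \sum_{p\geq 0}(-1)^p \dim\!\left(\bigoplus_{\sigma\in\Sigma^{(p)}} W_\sF^\sigma(u)\right) = \sum_{p\geq 0}(-1)^p \sum_{\sigma\in\Sigma^{(p)}} \dim W_\sF^\sigma(u).
\]
The left-hand side is precisely $\chi_u(\sF)$ by definition, which gives the claim. There are no real obstacles here; the only points requiring a word of care are that the sum defining $\chi_u(\sF)$ is finite (which follows because $X_\Sigma$ has cohomological dimension at most $d$, or simply because the complex has length $d$) and that each $W_\sF^\sigma(u)$ is finite-dimensional, both of which are immediate from the setup in Section~\ref{sec:klyachko}. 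One should also note that passing to $u$-graded pieces is legitimate because the $T$-action makes each $\H^p(X_\Sigma,\sF)$ into an $M$-graded vector space, as recorded in the excerpt.

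I expect the ``main obstacle'' to be merely bookkeeping: confirming that Klyachko's complex \eqref{eqn:cohom} is exactly the one whose Euler characteristic we are computing, i.e. that the identification of cohomology is as $M$-graded (equivalently, $u$-indexed) objects and that the indexing of terms by $\Sigma^{(p)}$ matches the cohomological degree $p$. Both are part of the statement of \cite[Theorem 4.1.1]{Klyachko1989} as quoted, so the proof is genuinely a one-line application of the rank-nullity/Euler characteristic principle once the finiteness observations are in place.
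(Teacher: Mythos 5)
Your proposal is correct and is essentially identical to the paper's proof: both identify $\H^p(X_\Sigma,\sF)_u$ with the cohomology of Klyachko's complex \eqref{eqn:cohom} and then apply the standard fact that the Euler characteristic of a bounded complex of finite-dimensional vector spaces equals the alternating sum of the dimensions of its terms. The paper states this in one sentence; your additional finiteness remarks are harmless bookkeeping.
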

\begin{proof}
	Since the cohomology of the complex \eqref{eqn:cohom} computes $\H^p(X_\Sigma,\sF)_u$, the alternating sum of the dimensions of the terms of the complex computes $\chi_u(\sF)$.
\end{proof}

\begin{remark}
	Klyachko initially constructs the complex \eqref{eqn:cohom} when $\sF$ is locally free. However, it is straightforward to check that the result \cite[Theorem 4.1.1]{Klyachko1989} is also true in the reflexive case; the proof in \textit{loc.~cit.}~goes through verbatim.
\end{remark}

\subsection{Ehrhart theory}
We briefly recall some basics of Ehrhart theory. See \textit{e.g.}~\cite{polytopes} for details.
For the purposes of this article, a \emph{convex polytope} is the convex hull of a finite set in $\RR^d$. A \emph{non-convex polytope} is a connected finite union of convex polytopes. A \emph{half-open polytope} is a polytope with some of its faces removed.
A \emph{quasi-polynomial} $f(t)$ is a function from $\NN$ to $\NN$ that may be written in the form
\[
	f(t)=a_d(t)t^d+a_{d-1}(t)t^{d-1}+\dots+a_{0}(t),
        \]
where the coefficient functions $a_i(t)$ are periodic of integral period. 
The degree of such an $f(t)$ is the largest exponent $d$ such that $a_d(t)$ is not identically zero; the period is the least common multiple of the periods of all coefficient functions.

For a rational polytope $\sP\subset \RR^d$, we may consider its Ehrhart function
\[L(\sP,t)=\#\left(t\sP \cap \ZZ^d\right)\quad\text{for }t=0,1,2,\ldots .\]
This function is a quasi-polynomial in $t$ whose period divides the smallest integer $\lambda$ such that $\lambda\cdot \sP$ is integral. The degree of 
$L(\sP,t)$ is the dimension of $\sP$. Assuming that $\sP$ has dimension $d$, the leading coefficient of $L(\sP,t)$ is simply the volume of $\sP$.

Given a subset $A\subset \RR^d$, we define its \emph{lattice point transform} to be
\[
	\lpt_A=\sum_{u\in A\cap\ZZ^d} z^u.
\]
This is a formal power series in $z_1,\ldots,z_d$ and is a useful tool for computing the generating series of $L(\sP,t)$. We will make use of the following. 

\begin{proposition}[\textit{cf.}~{\cite[Theorem 3.5]{polytopes}}]\label{prop:lpt}
Let $C\subset \RR^d$ be a simplicial cone whose rays are generated by primitive vectors $w_1,\ldots,w_k\in \ZZ^d$. Set
\[
	\Pi(C)=\left\{\sum \alpha_i w_i\ |\ 0\leq \alpha_i <1\right\}.
\]
Then\[	
\lpt_C(z)=\frac{\lpt_{\Pi(C)}}{(1-z^{w_1})\cdots(1-z^{w_k})}.
\]
\end{proposition}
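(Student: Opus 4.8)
The plan is to establish a bijection between $C\cap\ZZ^d$ and $(\Pi(C)\cap\ZZ^d)\times\ZZ_{\geq 0}^k$ that respects the additive structure, so that passing to lattice point transforms turns the bijection into the claimed product. First I would use simpliciality: the ray generators $w_1,\ldots,w_k$ are linearly independent, so every $v\in C$ has a unique expression $v=\sum_{i=1}^k\alpha_i w_i$ with all $\alpha_i\geq 0$. Writing $\alpha_i=n_i+\beta_i$ with $n_i=\lfloor\alpha_i\rfloor\in\ZZ_{\geq 0}$ and $\beta_i\in[0,1)$ gives $v=p+\sum_i n_i w_i$ where $p:=\sum_i\beta_i w_i\in\Pi(C)$. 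If moreover $v\in\ZZ^d$, then $p=v-\sum_i n_i w_i$ lies in $\ZZ^d$ as well, so $p\in\Pi(C)\cap\ZZ^d$; conversely, for any $p\in\Pi(C)\cap\ZZ^d$ and $(n_i)\in\ZZ_{\geq 0}^k$, the vector $p+\sum_i n_i w_i$ is a lattice point of $C$.

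Next I would check uniqueness of this decomposition. If $p+\sum_i n_i w_i=p'+\sum_i n_i' w_i$ with $p=\sum_i\beta_i w_i$ and $p'=\sum_i\beta_i' w_i$, then linear independence of the $w_i$ forces $\beta_i+n_i=\beta_i'+n_i'$ for each $i$; since $\beta_i,\beta_i'\in[0,1)$ while $n_i,n_i'\in\ZZ$, comparing integer parts yields $n_i=n_i'$ and hence $\beta_i=\beta_i'$, so $p=p'$. This produces the desired bijection $u\leftrightarrow(p,(n_1,\ldots,n_k))$, with $u=p+\sum_i n_i w_i$, between $C\cap\ZZ^d$ and $(\Pi(C)\cap\ZZ^d)\times\ZZ_{\geq 0}^k$; note $\Pi(C)$ is bounded, so $\Pi(C)\cap\ZZ^d$ is finite and $\lpt_{\Pi(C)}$ is a Laurent polynomial.

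Finally, since the bijection gives $z^u=z^p\cdot\prod_{i=1}^k(z^{w_i})^{n_i}$, I would reorganize the sum defining $\lpt_C$ and use the geometric series in each of the $k$ factors:
\[
\lpt_C(z)=\sum_{u\in C\cap\ZZ^d}z^u=\sum_{p\in\Pi(C)\cap\ZZ^d}z^p\prod_{i=1}^k\Bigl(\sum_{n_i\geq 0}(z^{w_i})^{n_i}\Bigr)=\lpt_{\Pi(C)}(z)\cdot\prod_{i=1}^k\frac{1}{1-z^{w_i}}.
\]
To make the rearrangement and the geometric series legitimate, I would fix a linear functional $\ell$ that is strictly positive on $C\setminus\{0\}$ (possible since $C$ is pointed), grade monomials by $\ell$, observe that each graded piece of every sum in sight is finite (a slice $C\cap\{\ell\leq c\}$ is compact), and conclude that the identity holds in the corresponding completed group algebra; equivalently, one restricts to the region where $|z^{w_i}|<1$ for all $i$ and reads it as an identity of rational functions. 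The combinatorial content is entirely the unique floor decomposition in a simplicial cone, which is elementary; I expect the only point requiring care to be this last bookkeeping about the ambient ring, not any genuine difficulty.
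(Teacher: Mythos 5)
Your argument is correct and is essentially the standard tiling proof of this fact: the paper itself gives no proof but cites Beck--Robins, Theorem 3.5, whose proof is exactly your unique decomposition of a lattice point of $C$ as a lattice point of $\Pi(C)$ plus a nonnegative integer combination of the $w_i$, followed by summing geometric series. Your attention to the convergence/formal-series bookkeeping is the right level of care, and nothing is missing.
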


\section{Computation of \texorpdfstring{$\boldsymbol{\chiloc}$}{chi\_loc}}
\subsection{A recursive formula}
Let $Y\to X$ be a minimal resolution of a surface $X$ with an $A_n$-singularity $s_n$. We 
are interested in computing 
\[
\chi(n,m):=\chiloc\left(s_n,S^m\Omega^1_Y\right).
\]
Our approach is to use the machinery described in Section~\ref{sec:klyachko}. It will be advantageous to first develop a recursive formula for $\chi(n,m)$. For $n=0$ we set $\chi(n,m)=\chi(0,m)=0$.

Fix $N=\ZZ^2$. As in Example \ref{ex:Anres} we let $\rho_i\subset \RR^2$ be the ray generated by $(i,1)$. We additionally consider the rays $\rho_+,\rho_-,\rho_\infty$ generated by $(1,0)$, $(-1,0)$, and $(0,-1)$, respectively. Fixing $n\geq 1$, we let $\widetilde\Sigma$, $\overline\Sigma$, and $\Sigma$ be the unique complete fans in $\RR^2$ whose rays are as follows:
\begin{align*}
	\widetilde\Sigma^{(1)}&=\{\rho_0,\ldots,\rho_{n+1},\rho_+,\rho_\infty,\rho_-\},\\
	\overline\Sigma^{(1)}&=\{\rho_0,\rho_1,\rho_{n+1},\rho_+,\rho_\infty,\rho_-\},\\
	\Sigma^{(1)}&=\{\rho_0,\rho_{n+1},\rho_+,\rho_\infty,\rho_-\}.
\end{align*}
See Figure \ref{fig:fans}.

For any $m\geq 0$ and $u\in M=\ZZ^2$, we define
\[
\delta_n(m,u):=\chi_u\left(\hat S^m \Omega_{X_{\Sigma}}\right)-\chi_u\left(\hat S^m \Omega_{X_{\overline\Sigma}}\right).
\]
We will see in Section~\ref{sec:filtrations} how to calculate $\delta_n(m,u)$ explicitly.
We set 
\[
	\delta_n(m)=\sum_{u\in M} \delta_n(m,u).
\]
Since both $X_{\overline\Sigma}$ and $X_{\Sigma}$ are complete, $\delta_n(m,u)=0$ for all but finitely many $u\in M$, and the above sum is finite.
\begin{lemma}\label{lemma:induct}
For any $m\geq 0$,
\[
	\chi(n,m)-\chi(n-1,m)=\delta_n(m).
\]
\end{lemma}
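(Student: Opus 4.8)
The plan is to reduce the local statement $\chi(n,m)-\chi(n-1,m)=\delta_n(m)$ to a global computation on complete toric surfaces, where Klyachko's machinery from \S\ref{sec:klyachko} applies directly. First I would make precise the relationship between the local Euler characteristic $\chiloc(s_n,\sF)$ and the cohomology of sheaves on the complete fans $\Sigma$, $\overline\Sigma$, $\widetilde\Sigma$. The key observation is that the fan $\widetilde\Sigma$ contains as a subfan the fan of Example~\ref{ex:Anres} (the minimal resolution $Y^\circ$ of the $A_n$-singularity), while $\overline\Sigma$ contains the fan of the minimal resolution of the $A_{n-1}$-singularity (after an appropriate lattice automorphism moving $\rho_1,\rho_{n+1}$ to $\rho_0,\rho_n$, or just reindexing), and $\Sigma$ contains the cone $\sigma_{A_n}$ itself, i.e.\ the singular surface $X^\circ$. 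The remaining three rays $\rho_+,\rho_\infty,\rho_-$ and the cones they span are common to all three fans and serve only to complete the surfaces to proper ones; they are chosen so that the toric divisors supported there are ``the same'' in all three cases.

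Next I would invoke the defining formula for $\chiloc$. Using the long exact sequences relating $\H^i(Y^\circ,\sF)$, $\H^i(Y^\circ\setminus E_s,\sF)$, and local cohomology along $E_s$, together with the fact that $X^\circ$ and $Y^\circ$ are affine so that $\H^0(X^\circ,\hat S^m\Omega_X)=\H^0(Y^\circ,S^m\Omega_Y)$ in negative degrees, one expresses $\chiloc(s_n,S^m\Omega_Y)$ as the difference $\chi(Y^\circ,S^m\Omega_Y)-\chi(X^\circ,\hat S^m\Omega_X)$ up to contributions that are controlled. Cleanly, in the toric setting: for each $u\in M$ the graded piece $\chi_u$ is computable by Proposition~\ref{prop:chiu} from the $W^\sigma_\sF(u)$, and the spaces $W^\sigma_\sF(u)$ attached to cones lying in the common part of the three fans \emph{cancel} in the difference $\chi_u(\hat S^m\Omega_{X_\Sigma})-\chi_u(\hat S^m\Omega_{X_{\overline\Sigma}})$ and in $\chi_u(\hat S^m\Omega_{X_{\widetilde\Sigma}})-\chi_u(\hat S^m\Omega_{X_\Sigma})$, leaving exactly the local contribution at the singular point (resp.\ at the resolution). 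So I would prove two identities: $\sum_u\bigl(\chi_u(\hat S^m\Omega_{X_{\widetilde\Sigma}})-\chi_u(\hat S^m\Omega_{X_\Sigma})\bigr)=\chi(n,m)$ and $\sum_u\bigl(\chi_u(\hat S^m\Omega_{X_{\overline\Sigma}})-\chi_u(\hat S^m\Omega_{X_\Sigma})\bigr)=-\chi(n-1,m)$; subtracting and recognizing $\chi_u(\hat S^m\Omega_{X_\Sigma})-\chi_u(\hat S^m\Omega_{X_{\overline\Sigma}})=\delta_n(m,u)$, then summing over $u$, gives $\chi(n,m)-\chi(n-1,m)=\delta_n(m)$ after noting the $\widetilde\Sigma$ contribution matches $\chi(n,m)$ and the $\overline\Sigma$ contribution matches $\chi(n-1,m)$.

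For the first identity I would argue as follows. Blache's formula \eqref{eqn:blache} applied to the complete surfaces $X_{\widetilde\Sigma}$ (smooth, since its fan refines the singular one only over $\sigma_{A_n}$) and $X_\Sigma$ (which has exactly the $A_n$-singularity $s_n$ and is otherwise smooth) gives $\chi(X_\Sigma,\widehat{S^m\Omega})=\chi(X_{\widetilde\Sigma},S^m\Omega)+\chiloc(s_n,S^m\Omega)$, because $X_{\widetilde\Sigma}\to X_\Sigma$ restricts to the minimal resolution $Y^\circ\to X^\circ$ near $s_n$ and is an isomorphism elsewhere. Rearranging, $\chiloc(s_n,S^m\Omega)=\chi(X_{\widetilde\Sigma},S^m\Omega)-\chi(X_\Sigma,\widehat{S^m\Omega})$, and passing to $M$-graded pieces via Proposition~\ref{prop:chiu} yields $\chi(n,m)=\sum_u\bigl(\chi_u(S^m\Omega_{X_{\widetilde\Sigma}})-\chi_u(\widehat{S^m\Omega}_{X_\Sigma})\bigr)$. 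The second identity is identical with $\widetilde\Sigma$ replaced by $\overline\Sigma$ and $A_n$ by $A_{n-1}$: one checks that $X_{\overline\Sigma}$ has precisely an $A_{n-1}$-singularity (the cone spanned by $\rho_1$ and $\rho_{n+1}$ is lattice-equivalent to $\sigma_{A_{n-1}}$, and the single inserted ray $\rho_0$ does nothing to it—wait, rather: $\overline\Sigma$ has rays $\rho_0,\rho_1,\rho_{n+1}$, so its cones over the original region are $\langle\rho_0,\rho_1\rangle$, smooth, and $\langle\rho_1,\rho_{n+1}\rangle$, which is an $A_{n-1}$-cone). So $\chi(n-1,m)=\sum_u\bigl(\chi_u(S^m\Omega_{X_{\overline\Sigma}})-\chi_u(\widehat{S^m\Omega}_{X_\Sigma})\bigr)$, noting that $S^m\Omega_{X_{\overline\Sigma}}$ means the reflexive hull, which is what Klyachko computes in any case. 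Subtracting, $\chi(n,m)-\chi(n-1,m)=\sum_u\bigl(\chi_u(\widehat{S^m\Omega}_{X_\Sigma})-\chi_u(\widehat{S^m\Omega}_{X_{\overline\Sigma}})\bigr)=\sum_u\delta_n(m,u)=\delta_n(m)$, using finiteness of the support of $\delta_n(m,\cdot)$ to justify the interchange and the finite sum. The main obstacle I anticipate is the bookkeeping needed to justify that Blache's decomposition \eqref{eqn:blache} applies in this toric global setting and that the relevant resolutions are minimal with the expected ADE type on the nose—i.e.\ correctly matching $X_{\overline\Sigma}$'s singularity to $A_{n-1}$ (including checking it is isolated and that $X_{\overline\Sigma}\to X_\Sigma$ restricts to the minimal resolution of $A_{n-1}$ locally) rather than any subtle issue with the cohomology computation, which is purely formal once Proposition~\ref{prop:chiu} is in hand.
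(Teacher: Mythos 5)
Your overall strategy is the paper's: realize the $A_n$- and $A_{n-1}$-singularities on the complete toric surfaces $X_{\Sigma}$ and $X_{\overline\Sigma}$, use that $X_{\widetilde\Sigma}$ is their common minimal resolution, apply Blache's formula \eqref{eqn:blache} twice, and subtract so that everything away from the singular points cancels. However, the identities you actually write down are wrong, and the chain of equalities does not close. First, a sign slip: from $\chi(X_\Sigma,\hat S^m\Omega)=\chi(X_{\widetilde\Sigma},S^m\Omega)+\chiloc(s_n,S^m\Omega)$ you "rearrange" to $\chiloc=\chi(X_{\widetilde\Sigma},S^m\Omega)-\chi(X_\Sigma,\hat S^m\Omega)$; the correct rearrangement is $\chi(n,m)=\chi(X_\Sigma,\hat S^m\Omega)-\chi(X_{\widetilde\Sigma},S^m\Omega)$. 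Second, and more seriously, your "second identity" pairs the wrong surfaces: you claim $\chi(n-1,m)=\sum_u\bigl(\chi_u(S^m\Omega_{X_{\overline\Sigma}})-\chi_u(\hat S^m\Omega_{X_\Sigma})\bigr)$ (and, inconsistently, $=-\chi(n-1,m)$ in your outline). Blache's formula for the $A_{n-1}$-point must compare $X_{\overline\Sigma}$ with \emph{its} minimal resolution, which is $X_{\widetilde\Sigma}$, not with $X_\Sigma$; note $X_{\overline\Sigma}\to X_\Sigma$ is only a partial resolution and $X_{\overline\Sigma}$ is itself singular, so \eqref{eqn:blache} does not apply to that pair. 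In fact, by the two correct applications of Blache one gets $\sum_u\bigl(\chi_u(\hat S^m\Omega_{X_{\overline\Sigma}})-\chi_u(\hat S^m\Omega_{X_\Sigma})\bigr)=\chi(n-1,m)-\chi(n,m)=-\delta_n(m)$, which is neither $\chi(n-1,m)$ nor $-\chi(n-1,m)$ in general. Consequently, even granting your two identities as stated, subtracting them yields $\sum_u\bigl(\chi_u(\hat S^m\Omega_{X_{\widetilde\Sigma}})-\chi_u(\hat S^m\Omega_{X_{\overline\Sigma}})\bigr)$, not $\sum_u\delta_n(m,u)$, so the final line of your argument does not follow.

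The repair is exactly the intended proof: establish $\chi(n,m)=\chi(\hat S^m\Omega_{X_\Sigma})-\chi(S^m\Omega_{X_{\widetilde\Sigma}})$ and $\chi(n-1,m)=\chi(\hat S^m\Omega_{X_{\overline\Sigma}})-\chi(S^m\Omega_{X_{\widetilde\Sigma}})$ (the latter after checking, as you correctly do, that $X_{\overline\Sigma}$ has a single $A_{n-1}$-singularity coming from the cone spanned by $\rho_1,\rho_{n+1}$, resolved minimally by $X_{\widetilde\Sigma}$), then subtract so the $\chi(S^m\Omega_{X_{\widetilde\Sigma}})$ terms cancel, leaving $\chi(\hat S^m\Omega_{X_\Sigma})-\chi(\hat S^m\Omega_{X_{\overline\Sigma}})=\sum_u\delta_n(m,u)=\delta_n(m)$. (A side remark: your claim that $Y^\circ$ is affine is false --- it contains the complete exceptional curves --- but that digression is not needed once the argument is run on the complete surfaces.)
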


\begin{proof}
The toric varieties $X_{\widetilde\Sigma}$, $X_{\overline\Sigma}$, and $X_{\Sigma}$ are all complete surfaces. Similarly to Example \ref{ex:Anres}, there is a sequence of toric morphisms 
\[X_{\widetilde\Sigma}\longrightarrow X_{\overline\Sigma}\longrightarrow X_{\Sigma}.\]
The surface $X_\Sigma$ has a single $A_n$-singularity (see Example \ref{ex:An}). The surface $X_{\overline\Sigma}$ has a single $A_{n-1}$-singularity: this may be seen by applying the lattice isomorphism 
\[
\left(	\begin{array}{c c}
	1&-1\\
	0&1\\
\end{array}\right)\in\SL(2,\ZZ).
\]
Here, an $A_0$-singularity is just a smooth point. As in Example \ref{ex:Anres}, $X_{\widetilde\Sigma}$ is the minimal resolution of both $X_\Sigma$ and $X_{\overline\Sigma}$.

By applying \eqref{eqn:blache} for both the $A_n$- and the $A_{n-1}$-singularity, we obtain
\begin{align*}
	\chi(n,m)-\chi(n-1,m)=\left(\chi\left(\hat S^m \Omega_{X_{\Sigma}}\right)-\chi\left(S^m \Omega_{X_{\widetilde\Sigma}}\right)\right)-\left(\chi\left(\hat S^m \Omega_{X_{\overline \Sigma}}\right)-\chi\left(S^m \Omega_{X_{\widetilde\Sigma}}\right)\right)\\
	=\chi\left(\hat S^m \Omega_{X_{\Sigma}}\right)-\chi\left(\hat S^m \Omega_{X_{\overline\Sigma}}\right), 
\end{align*}
and the claim follows.
\end{proof}

\begin{figure}
	\begin{tabular}{c c c}
		\scriptsize{
			\begin{tikzpicture}[scale=.3]
		\draw[->] (0,0) -- (7.8,1.3);
		\draw[->] (0,0) -- (0,1.3);
		\draw[->] (0,0) -- (5.2,1.3);
		\draw[->] (0,0) -- (3.9,1.3);
		\draw[->] (0,0) -- (2.6,1.3);
		\draw[->] (0,0) -- (1.3,1.3);
		\draw[->] (0,0) -- (6.5,1.3);
		\draw[->] (0,0) -- (2,0);
		\draw[->] (0,0) -- (-2,0);
		\draw[->] (0,0) -- (0,-1.3);
\node[right] at  (2,0) {$\rho_+$};
\node[left] at  (-2,0) {$\rho_-$};
\node[below] at  (0,-1.3) {$\rho_\infty$};
\node[above right] at  (0,1.3) {$\rho_0$};
\node[above right] at  (1.3,1.3) {$\rho_1$};
\node[above right] at  (2.6,1.3) {$\rho_2$};
\node[above right] at  (6.5,1.3) {$\rho_n$};
\node[above right] at  (7.8,1.3) {$\rho_{n+1}$};
\node[above ] at  (5.3,1.3) {$\cdots\cdots$};
	\end{tikzpicture} 
}		&
{\tiny			\begin{tikzpicture}[scale=.3]
		\draw[->] (0,0) -- (7.8,1.3);
		\draw[->] (0,0) -- (0,1.3);
		\draw[->] (0,0) -- (1.3,1.3);
		\draw[->] (0,0) -- (2,0);
		\draw[->] (0,0) -- (-2,0);
		\draw[->] (0,0) -- (0,-1.3);
\node[right] at  (2,0) {$\rho_+$};
\node[left] at  (-2,0) {$\rho_-$};
\node[below] at  (0,-1.3) {$\rho_\infty$};
\node[above right] at  (0,1.3) {$\rho_0$};
\node[above right] at  (1.3,1.3) {$\rho_1$};
\node[above right] at  (7.8,1.3) {$\rho_{n+1}$};
	\end{tikzpicture} 
}&
{\tiny			\begin{tikzpicture}[scale=.3]
		\draw[->] (0,0) -- (7.8,1.3);
		\draw[->] (0,0) -- (0,1.3);
		\draw[->] (0,0) -- (2,0);
		\draw[->] (0,0) -- (-2,0);
		\draw[->] (0,0) -- (0,-1.3);
\node[right] at  (2,0) {$\rho_+$};
\node[left] at  (-2,0) {$\rho_-$};
\node[below] at  (0,-1.3) {$\rho_\infty$};
\node[above right] at  (0,1.3) {$\rho_0$};
\node[above right] at  (7.8,1.3) {$\rho_{n+1}$};
	\end{tikzpicture} 
}\\
$\widetilde{\Sigma}$&
$\overline{\Sigma}$
&$\Sigma$
\end{tabular}	
\caption{The fans $\widetilde\Sigma$, $\overline\Sigma$, and $\Sigma$}\label{fig:fans}
\end{figure}

\subsection{Computing $\boldsymbol{\delta_n(m)}$}\label{sec:filtrations}
Define
\[
	\lambda_m(i)=\begin{cases}
0 & i\leq -m,\\
i+m &-m \leq i \leq 1,\\
m+1 & i \geq  1.
	\end{cases}
\]
\begin{lemma}\label{lemma:delta}For any $u\in M=\ZZ^2$, 
\begin{align*}
\delta_n(m,u)=(m+1)-\lambda_m(\rho_1(u))&-\max \{m+1-\lambda_m(\rho_0(u))-\lambda_m(\rho_1(u)),0\}\\
&-\max \{m+1-\lambda_m(\rho_1(u))-\lambda_m(\rho_{n+1}(u)),0\}\\
&+\max \{m+1-\lambda_m(\rho_0(u))-\lambda_m(\rho_{n+1}(u)),0\}.
\end{align*}
\end{lemma}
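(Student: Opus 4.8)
The plan is to compute both Euler characteristics $\chi_u(\hat S^m\Omega_{X_\Sigma})$ and $\chi_u(\hat S^m\Omega_{X_{\overline\Sigma}})$ using Proposition~\ref{prop:chiu}, and to observe that the two complete fans $\Sigma$ and $\overline\Sigma$ differ only by the single ray $\rho_1$. First I would record the relevant filtration dimensions. By Example~\ref{ex:Scotangent}, for a ray $\rho$ and the sheaf $\hat S^m\Omega$ on a surface we have $\dim V^\rho_{\hat S^m\Omega}(i) = \binom{m+1-\lambda_m(i)+1}{1}$ up to the right normalization — the point being that $\dim V^\rho(i)$ as a function of $i$ is exactly $m+1-\lambda_m(i)$ when $V = S^m(M\otimes\k)$ has dimension $m+1$ (here $\rho^\perp$ is one-dimensional, so $\dim S^{j}(\rho^\perp)\cdot S^{m-j}(M\otimes\k) = m-j+1$, giving the piecewise-linear function $\lambda_m$). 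So for a maximal cone $\sigma$ spanned by two rays $\rho,\rho'$, the dimension of $W^\sigma_{\hat S^m\Omega}(u) = V/(V^\rho(\rho(u)) + V^{\rho'}(\rho'(u)))$ equals $\max\{m+1 - \lambda_m(\rho(u)) - \lambda_m(\rho'(u)),\,0\}$ by a dimension count (the sum of two subspaces of $V$ of codimensions $\lambda_m(\rho(u))$ and $\lambda_m(\rho'(u))$ has codimension $\max\{(m+1)-\lambda_m(\rho(u))-\lambda_m(\rho'(u)),0\}$, since the filtrations are "generic" enough — this needs a small argument, see below). For a ray $\rho$, $\dim W^\rho(u) = \lambda_m(\rho(u))$, and for the zero cone $\dim W = 0$.

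Next I would apply Proposition~\ref{prop:chiu} to each fan and subtract. In the alternating sum $\chi_u(\sF) = \sum_p (-1)^p\sum_{\sigma\in\Sigma^{(p)}}\dim W^\sigma(u)$, every cone not involving $\rho_1$ appears identically in $\Sigma$ and $\overline\Sigma$ and cancels. What remains from $\overline\Sigma$: the ray $\rho_1$ itself (contributing $+\lambda_m(\rho_1(u))$ to $\chi_u(\hat S^m\Omega_{X_{\overline\Sigma}})$ with sign $(-1)^1$), and the two maximal cones $\langle\rho_0,\rho_1\rangle$ and $\langle\rho_1,\rho_{n+1}\rangle$ (each contributing with sign $(-1)^2$); from $\Sigma$, the single maximal cone $\langle\rho_0,\rho_{n+1}\rangle$ that got subdivided. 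Being careful with signs, $\delta_n(m,u) = \chi_u(\hat S^m\Omega_{X_\Sigma}) - \chi_u(\hat S^m\Omega_{X_{\overline\Sigma}})$ picks up $+\lambda_m(\rho_1(u))$ wait — $\langle\rho_0,\rho_{n+1}\rangle\in\Sigma^{(2)}$ contributes $+\dim W^{\langle\rho_0,\rho_{n+1}\rangle}(u)$, while in $\overline\Sigma$ we lose $-\lambda_m(\rho_1(u)) + \dim W^{\langle\rho_0,\rho_1\rangle}(u) + \dim W^{\langle\rho_1,\rho_{n+1}\rangle}(u)$; assembling gives
\[
\delta_n(m,u) = \lambda_m(\rho_1(u)) + \dim W^{\langle\rho_0,\rho_{n+1}\rangle}(u) - \dim W^{\langle\rho_0,\rho_1\rangle}(u) - \dim W^{\langle\rho_1,\rho_{n+1}\rangle}(u).
\]
Plugging in $\dim W^{\langle\rho,\rho'\rangle}(u) = \max\{m+1-\lambda_m(\rho(u))-\lambda_m(\rho'(u)),0\}$ and then rearranging $\lambda_m(\rho_1(u)) + \max\{\cdots_{0,n+1}\} = (m+1) - (m+1-\lambda_m(\rho_1(u))) + \max\{\cdots_{0,n+1}\}$ — I would massage this into the stated form, where the bare $(m+1)-\lambda_m(\rho_1(u))$ comes from combining $\lambda_m(\rho_1(u))$ with a term that is always nonnegative for the relevant $u$ (here one uses that $\lambda_m(i)\le m+1$ always, so $\lambda_m(\rho_1(u)) = (m+1) - (m+1-\lambda_m(\rho_1(u)))$ with the subtracted quantity in $[0,m+1]$). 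The precise bookkeeping of which $\max$ collapses is routine once the signs are fixed.

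The main obstacle is justifying the dimension formula $\dim\big(V^\rho(a) + V^{\rho'}(b)\big) = \min\{(m+1),\ \lambda_m(a)+\lambda_m(b)\}$ — equivalently that the two flags $\{V^{\rho}(i)\}$ and $\{V^{\rho'}(i)\}$ inside $S^m(M\otimes\k)$ are in sufficiently general position. For $\hat S^m\Omega$ these filtrations are determined (Example~\ref{ex:cotangent}) by the hyperplanes $\ker(\nu_\rho), \ker(\nu_{\rho'})\subset M\otimes\k$; since $\nu_{\rho_0} = (0,1)$, $\nu_{\rho_1}=(1,1)$, $\nu_{\rho_{n+1}}=(n+1,1)$ are pairwise linearly independent, these two lines in $M\otimes\k\cong\k^2$ are distinct, and I would check directly that the induced symmetric-power filtrations $S^{j}(\ker\nu_\rho)\cdot S^{m-j}(V)$ and $S^{j'}(\ker\nu_{\rho'})\cdot S^{m-j'}(V)$ meet transversally in $S^m\k^2$ (this is a statement about binary forms: products of powers of two distinct linear forms span the expected-dimensional space). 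Once that transversality is in hand, everything else is linear algebra and careful sign tracking in Proposition~\ref{prop:chiu}, so I would present the transversality lemma first and then assemble $\delta_n(m,u)$ in a short computation.
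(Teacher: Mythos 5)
Your route is the same as the paper's: apply Proposition~\ref{prop:chiu} to $\Sigma$ and $\overline\Sigma$, cancel all cones not involving $\rho_1$, and reduce everything to linear algebra of the filtration subspaces, with the needed input being that the filtrations attached to distinct rays are in general position (products of powers of coprime binary linear forms). That transversality point is exactly what the paper uses (cf.\ Lemma~\ref{lemma:intersections}), and your cancellation and signs are correct: $\delta_n(m,u)=\dim W^{\rho_1}(u)+\dim W^{\sigma_{0(n+1)}}(u)-\dim W^{\sigma_{01}}(u)-\dim W^{\sigma_{1(n+1)}}(u)$, with $\dim W^{\rho_1}(u)=\lambda_m(\rho_1(u))$.

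The gap is your formula for the two-dimensional cones. You assert $\dim W^\sigma(u)=\max\{m+1-\lambda_m(\rho(u))-\lambda_m(\rho'(u)),0\}$, i.e.\ that two subspaces of codimensions $c,c'$ in the $(m+1)$-dimensional space $V$ have sum of codimension $\max\{m+1-c-c',0\}$. Under the very transversality you invoke, the codimension of the sum is $\max\{c+c'-(m+1),0\}$; the quantity $\max\{m+1-c-c',0\}$ is $\dim\bigl(V^\rho(\rho(u))\cap V^{\rho'}(\rho'(u))\bigr)$, not $\dim W^\sigma(u)$. Substituting your formula into your (correctly) assembled expression gives $\lambda_m(\rho_1(u))$ plus the same three max-terms as in the lemma, whereas the lemma's right-hand side begins with $(m+1)-\lambda_m(\rho_1(u))$; the discrepancy $2\lambda_m(\rho_1(u))-(m+1)$ is not identically zero (for $u=(0,-m-1)$ all three $\lambda$'s vanish, your expression equals $-(m+1)$, while $\delta_n(m,u)=0$), so no bookkeeping of signs or collapsing of maxima can close it. The repair is exactly the paper's step: write $\dim W^\sigma(u)=(m+1)-\dim\bigl(V^\rho+V^{\rho'}\bigr)$ and expand via $\dim(V^\rho+V^{\rho'})=\dim V^\rho+\dim V^{\rho'}-\dim(V^\rho\cap V^{\rho'})$ with $\dim(V^\rho\cap V^{\rho'})=\max\{m+1-\lambda_m(\rho(u))-\lambda_m(\rho'(u)),0\}$; then the linear terms combine to $(m+1)-\lambda_m(\rho_1(u))$ and the three maxima appear with the stated signs. (A minor further slip: $W^{\{0\}}(u)=V$ has dimension $m+1$, not $0$, but this cancels between the two fans and is harmless here.)
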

\begin{proof}
	We let $V$ and $\{V^\rho(i)\}$ be the vector space and filtrations associated to the reflexive hull of the $\supth{m}$ symmetric power of the cotangent sheaf on any toric surface. Then by Example \ref{ex:Scotangent} we have
\begin{align*}
\dim V&=m+1,\\
	\dim V^\rho(i)&=m+1-\lambda_m(i),\\
	\dim V^\rho(i)\cap V^{\rho'}{(j)}&=\max \{m+1-\lambda_m(i)-\lambda_m(j),0\}\quad\mathrm{if}\ \rho\neq \rho'.
\end{align*}

For $0\leq i,j\leq n+1$ let $\sigma_{ij}$ denote the cone in $\RR^2$ spanned by $\rho_i$ and $\rho_j$. We have that $\Sigma^{(0)}=\overline\Sigma^{(0)}$, and the rays of $\Sigma$ and $\overline \Sigma$ differ only by $\rho_1$ (which belongs to $\overline \Sigma$). The sets $\Sigma^{(2)}$ and $\overline\Sigma^{(2)}$ differ only by $\sigma_{01},\sigma_{1(n+1)}$, which belong to $\overline\Sigma^{(2)}$,  and $\sigma_{0(n+1)}$, which belongs to $\Sigma^{(2)}$. 
Applying Proposition \ref{prop:chiu} to both 
$\chi_u(S^m \hat \Omega_{X_{\overline\Sigma}})$ and $\chi_u(S^m \hat \Omega_{X_{\Sigma}})$ and cancelling terms, we obtain
\begin{align*}
\delta_n(m,u)&=\dim W^{\rho_1}(u)-\dim W^{\sigma_{01}}(u)-\dim W^{\sigma_{1(n+1)}}(u)+\dim W^{\sigma_{0(n+1)}}(u)\\
&=\begin{aligned}[t]-\dim V^{\rho_1}(\rho_1(u))&+\dim (V^{\rho_0}(\rho_0(u))+V^{\rho_1}(\rho_1(u)))\\
&+\dim (V^{\rho_1}(\rho_1(u))+V^{\rho_{n+1}}(\rho_{n+1}(u)))\\
&-\dim (V^{\rho_0}(\rho_0(u))+V^{\rho_{n+1}}(\rho_{n+1}(u)))
\end{aligned}\\
&=\begin{aligned}[t](m+1)-\lambda_m(\rho_1(u))&-\max \{m+1-\lambda_m(\rho_0(u))-\lambda_m(\rho_1(u)),0\}\\
&-\max \{m+1-\lambda_m(\rho_1(u))-\lambda_m(\rho_{n+1}(u)),0\}\\
&+\max \{m+1-\lambda_m(\rho_0(u))-\lambda_m(\rho_{n+1}(u)),0\}.
\end{aligned}
\end{align*}
The second equality follows by writing $W^\sigma$ in terms of $V^\rho$. The third follows by using
\[
	\dim (V^\rho(i)+V^{\rho'}(j))=\dim V^\rho(i)+\dim V^{\rho'}(j)-\dim (V^\rho(i)\cap V^{\rho'}(j))
\]
and the above computation of $\dim (V^\rho(i)\cap V^{\rho'}(j))$.
\end{proof}

Using the formula for $\delta_n(m,u)$ in Lemma \ref{lemma:delta}, we may extend $\delta_n(m,u)$ to a function in $u$  on all of $\RR^2$; this function is piecewise linear. 
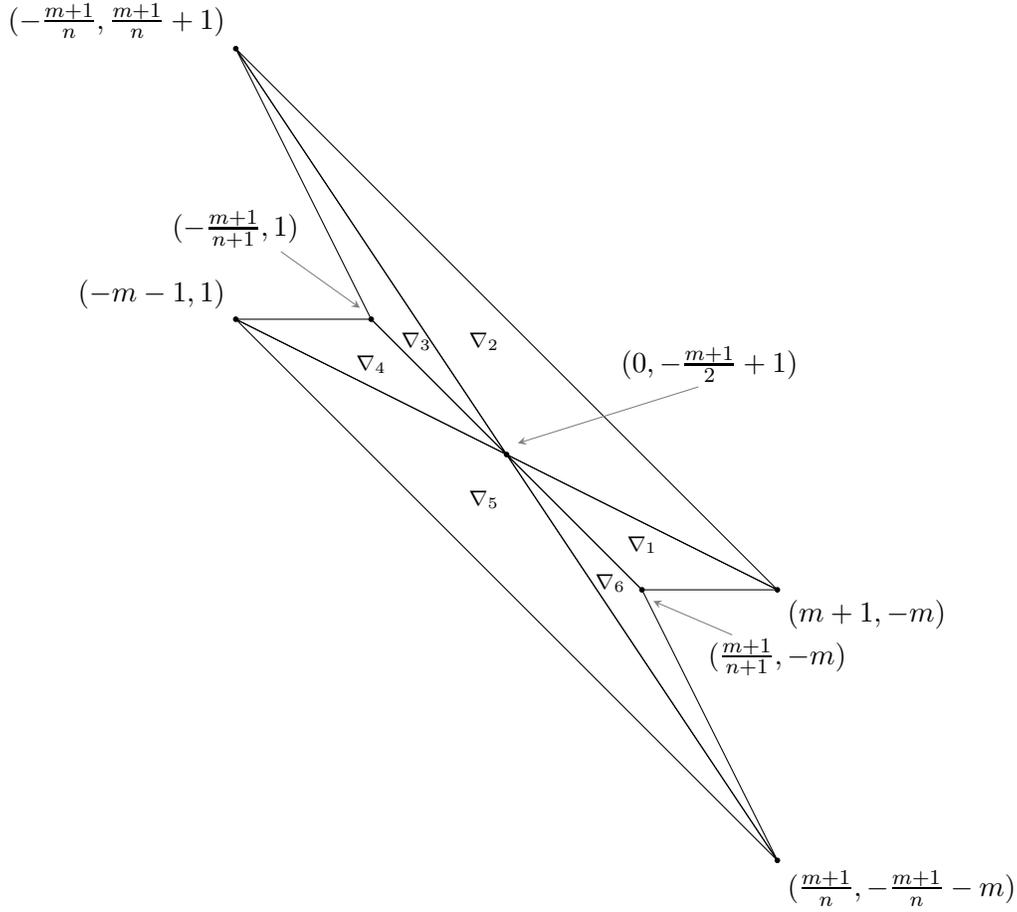
\begin{figure}
\centering
	\begin{tikzpicture}[scale=.3]
\draw (-12,0) -- (-6,0) -- (0,-6) -- (-12,0);
\draw (-12,0) -- (12,-24) -- (0,-6) -- (-12,0);
\draw (-6,0) -- (0,-6) -- (-12,12) -- (-6,0);
\draw (-12,12) -- (0,-6) -- (12,-12) -- (-12,12);
\draw (12,-12) -- (0,-6) -- (6,-12) -- (12,-12);
\draw (6,-12) -- (0,-6) -- (12,-24) -- (6,-12);
\filldraw (-12,0) circle [radius=.1];
\filldraw (12,-24) circle [radius=.1];
\filldraw (12,-12) circle [radius=.1];
\filldraw (-12,12) circle [radius=.1];
\filldraw (-6,0) circle [radius=.1];
\filldraw (6,-12) circle [radius=.1];
\filldraw (0,-6) circle [radius=.1];
\node[above left] at  (-12,0) {$(-m-1,1)$};
\node[below right] at  (12,-24) {$\left(\frac{m+1}{n},-\frac{m+1}{n}-m\right)$};
\node[above left] at  (-12,12) {$\left(-\frac{m+1}{n},\frac{m+1}{n}+1\right)$};
\node[below right] at  (12,-12) {$(m+1,-m)$};
\node at  (12,-15) {$\left(\frac{m+1}{n+1},-m\right)$};
\node at  (-12,4) {$\left(-\frac{m+1}{n+1},1\right)$};
\draw[-stealth,gray] (-10,3) -- (-6.5,.5);
\draw[-stealth,gray] (10,-14) -- (6.5,-12.5);
\node at  (9,-2) {$\left(0,-\frac{m+1}{2}+1\right)$};
\draw[-stealth,gray] (8.5,-3) -- (.5,-5.5);
\scriptsize{
	\node at (-1,-1) {$\nabla_2$};
	\node at (-4,-1) {$\nabla_3$};
	\node at (-6,-2) {$\nabla_4$};
	\node at (-1,-8) {$\nabla_5$};
	\node at (4.6,-11.7) {$\nabla_6$};
	\node at (6,-10) {$\nabla_1$};
}

\end{tikzpicture}
	\caption{Regions of linearity of $\delta_n(m,u)$}\label{fig:linearity}
\end{figure}
\begin{lemma}\label{lemma:linearity}
	Outside of the six polytopes $\nabla_1,\ldots,\nabla_6$ pictured in Figure \ref{fig:linearity}, the function $\delta_n(m,u)$ vanishes. 
	The regions of linearity of $\delta_n(m,u)$ are exactly the six polytopes $\nabla_1,\ldots,\nabla_6$. On each of these six simplices, $\delta_n(m,u)$ takes value 
$(m+1)/2$ at the vertex $(0,-(m+1)/2+1)$ and $0$ at the other two vertices.	
\end{lemma}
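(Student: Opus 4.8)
The statement of Lemma~\ref{lemma:linearity} has three parts: (i) $\delta_n(m,u)$ vanishes outside $\nabla_1\cup\dots\cup\nabla_6$, (ii) the regions of linearity are exactly these six simplices, and (iii) on each simplex the function is the affine interpolation of the prescribed vertex values. The plan is to work directly from the closed formula for $\delta_n(m,u)$ in Lemma~\ref{lemma:delta}, which expresses $\delta_n(m,u)$ as $(m+1)-\lambda_m(\rho_1(u))$ minus two $\max\{\cdot,0\}$ terms plus one $\max\{\cdot,0\}$ term. First I would record that each linear form $\rho_i(u) = \langle \nu_{\rho_i}, u\rangle$ is $\langle (i,1), u \rangle$, so with $u=(a,b)$ we have $\rho_0(u) = b$, $\rho_1(u) = a+b$, and $\rho_{n+1}(u) = (n+1)a + b$; these are the only three linear forms appearing. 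The function $\lambda_m$ is piecewise linear with breakpoints at $i=-m$ and $i=1$, and likewise each $\max\{m+1 - \lambda_m(\cdot) - \lambda_m(\cdot),0\}$ term is piecewise linear in $u$, with its own locus where the argument changes sign. So $\delta_n(m,u)$ is manifestly piecewise linear, and its regions of linearity are cut out by the hyperplanes $\rho_i(u) \in \{-m, 1\}$ for $i \in \{0,1,n+1\}$ together with the sign-change loci of the three $\max$-arguments.

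Second, I would prove vanishing outside the six simplices by a support argument. The term $(m+1) - \lambda_m(\rho_1(u))$ is nonzero only when $\rho_1(u) \leq 1$, i.e. $a+b \leq 1$ (using $\lambda_m(i) = m+1$ for $i \geq 1$); more precisely it contributes $m+1 - \lambda_m(a+b)$, which is supported on $-m \leq a+b \leq 1$ and is $m+1$ for $a+b \leq -m$. Similarly each $\max\{m+1 - \lambda_m(\rho_i(u)) - \lambda_m(\rho_j(u)),0\}$ is supported where both coordinates are small. One checks that outside the union of the six triangles every one of these supports either is empty or the terms cancel in pairs (the two negative $\max$-terms against the positive constant $m+1 - \lambda_m(\rho_1(u))$ and the positive $\max$-term). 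Concretely: in the region where $\rho_0(u), \rho_1(u), \rho_{n+1}(u)$ are all $\leq -m$ the formula gives $(m+1) - (m+1) - (m+1) + (m+1) = 0$; in the region where all three are $\geq 1$ everything is zero; and the remaining unbounded sectors are handled by the same bookkeeping. This pins the support down to the bounded region, which by inspection of the breakpoint hyperplanes is exactly $\bigcup \nabla_k$.

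Third, for the vertex values I would simply evaluate. The common vertex of all six simplices in Figure~\ref{fig:linearity} is $u_0 = (0, -(m+1)/2 + 1)$, at which $\rho_0(u_0) = \rho_1(u_0) = \rho_{n+1}(u_0) = -(m+1)/2 + 1$; plugging $i = -(m+1)/2+1$ into $\lambda_m$ (noting $-m \leq i \leq 1$ for $m \geq 1$) gives $\lambda_m(i) = (m+1)/2$, and the three $\max$-arguments are $m+1 - 2\cdot(m+1)/2 = 0$, so $\delta_n(m,u_0) = (m+1) - (m+1)/2 - 0 - 0 + 0 = (m+1)/2$. At the outer vertices — e.g. $(-m-1,1)$, $(m+1,-m)$, $(-\tfrac{m+1}{n},\tfrac{m+1}{n}+1)$, etc. — one checks one of the linear forms $\rho_i(u)$ equals exactly $1$ or $-m$ so that the constant term $(m+1) - \lambda_m(\rho_1(u))$ either vanishes or is cancelled by a $\max$-term, yielding $\delta_n(m,u) = 0$; I would tabulate these seven vertices and their three form-values once and read off the answers. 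Since $\delta_n(m,u)$ is affine on each $\nabla_k$ and a triangle's affine function is determined by its vertex values, this proves (iii), and the fact that the six triangles meet only along the breakpoint hyperplanes of $\delta_n$ gives (ii).

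**Main obstacle.** The genuine work is the casework in step two: there are three linear forms, each with two breakpoints, plus three sign-change loci for the $\max$-terms, so a priori many chambers. The efficient route is \emph{not} to enumerate all chambers blindly but to exploit the symmetry of the formula under the lattice involution swapping $\rho_0 \leftrightarrow \rho_{n+1}$ (the matrix from the proof of Lemma~\ref{lemma:induct}, which fixes $\rho_1$ up to the relevant reindexing) and under the central symmetry $u \mapsto 2u_0 - u$ that the picture suggests, reducing to checking one or two representative sectors and one representative triangle. Getting the bounded support region to match Figure~\ref{fig:linearity} exactly — in particular verifying the coordinates of the two "thin" vertices $(\pm\tfrac{m+1}{n}, \mp(\tfrac{m+1}{n}) \mp m$ or $+1)$ that depend on $n$ — is the one place where care is needed, since those come from the intersection of $\rho_1(u) = -m$ (or $=1$) with $\rho_{n+1}(u) = -m$ (or $=1$), i.e. from solving a $2\times 2$ system whose solution genuinely involves $n$.
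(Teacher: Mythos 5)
Your setup (the candidate non-linearity locus) matches the paper's, your sector bookkeeping in step two is a workable, if heavier, replacement for the paper's shortcut (since $X_\Sigma$ and $X_{\overline\Sigma}$ are complete, $\delta_n(m,u)$ vanishes for all but finitely many $u\in\ZZ^2$, hence identically on every unbounded chamber), and your vertex evaluations are correct. The genuine gap is in step three. You deduce (iii) from ``$\delta_n(m,u)$ is affine on each $\nabla_k$ and a triangle's affine function is determined by its vertex values,'' but affineness on each $\nabla_k$ is precisely the nontrivial content of the claim that the regions of linearity are \emph{exactly} the six triangles, and it does not follow from your step one: the chamber decomposition cut out by the lines $\rho_i(u)\in\{1,-m\}$ and $\rho_i(u)+\rho_j(u)=1-m$ is strictly finer than the six triangles. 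For example, the line $\rho_0(u)=1$ meets the triangle with vertices $\left(-\tfrac{m+1}{n+1},1\right)$, $\left(0,-\tfrac{m+1}{2}+1\right)$, $\left(-\tfrac{m+1}{n},\tfrac{m+1}{n}+1\right)$ in its first vertex and in an interior point of the opposite edge, hence crosses its interior; similarly $\rho_{n+1}(u)=-m$ crosses the triangle containing $(-m-1,1)$, and the analogous lines cross the triangles in the lower half of the figure. So a priori $\delta_n(m,\cdot)$ could have a kink inside a $\nabla_k$ while still taking the prescribed values at its three vertices; interpolation from vertex values cannot exclude this. What is needed, and what the paper's proof does, is to compute the linear function coming from Lemma~\ref{lemma:delta} on each bounded chamber and check that the functions on the sub-chambers of a given $\nabla_k$ coincide (near those interior walls the $\max$-terms involving the breaking linear form are in their zero regime), so that chambers merge into exactly the six triangles; your proposal replaces this merging computation with an assertion of its conclusion.

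A smaller remark: for (ii) you must also rule out further merging, i.e.\ show $\delta_n(m,\cdot)$ is not affine across the walls between adjacent triangles nor across their outer edges. Once affineness on each $\nabla_k$ is known this does follow from your vertex table (the affine function on $\nabla_k$ vanishes exactly on the line through its two outer vertices, which misses the neighbouring triangle's remaining vertex and shows the function is nonzero on the interior, while the complement carries the zero function), but it should be argued rather than folded into ``the six triangles meet only along the breakpoint hyperplanes.'' Also note that the symmetries you invoke to cut down the casework are weaker than suggested: the central symmetry $u\mapsto(0,1-m)-u$ does preserve $\delta_n(m,\cdot)$, but the lattice involution swapping $\rho_0$ and $\rho_{n+1}$ sends $\rho_1$ to $\rho_n$ and so does not obviously fix the formula; you would still need three representative triangles, not one.
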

\begin{proof}
	From the description of $\delta_n(m,u)$ in Lemma \ref{lemma:delta} and the definition of $\lambda_m(i)$, it follows that the non-linear locus of $\delta_n(m,u)$ is contained in the lines $\rho_i(u)=1$, $\rho_j(u)=-m$ for $i,j=0,1,n+1$ along with the lines $\rho_0(u)+\rho_1(u)=1-m$, $\rho_1(u)+\rho_{n+1}(u)=1-m$, and $\rho_0(u)+\rho_{n+1}(u)=1-m$. 

	Since $\delta_n(m,u)=0$ for all but finitely many $u\in \ZZ^2$, we know that $\delta_n(m,u)=0$ on any unbounded region in the above subdivision of $\RR^2$. For each of the remaining bounded regions, we may calculate the linear function representing $\delta_n(m,u)$ on that region. In doing so, and combining regions with the same linear function, one obtains the result of the lemma.
\end{proof}

\subsection{Counting lattice points}
For this subsection we introduce some notation for subsets of $\RR^2$. Let $\gamma=(0,1/2)\in\RR^2$.
For $a\leq b\in \QQ$ set 
\[
	\lc a:b \rc=\Conv\{(a,0),(b,0),(0,1/2)\}\setminus \gamma.
\]
We further define
\[
\lc a \rc=\lc a:a\rc,\quad \lo a:b\rc=\lc a:b \rc \setminus \lc a\rc, \quad \lc a: b\ro=\lc a:b\rc \setminus \lc b\rc, \quad \lo a:b\ro =\lc a:b\rc \setminus (\lc a\rc \cup \lc b\rc ).
\]
For sets $A,B\subset \RR^2$ we will use the notation $A\bm{+}B$ to denote a disjoint union of $A$ and $B$ as abstract sets. Likewise, for $\ell\in\ZZ$ we use $\ell \bm{*} A$ to denote the disjoint union of $A$ with itself $\ell$ times (again as an abstract set).
In particular, $\#((\ell\bm{*} A)\cap\ZZ^2)=\ell\cdot (\#(A\cap \ZZ^2))$.

We set
\begin{align*}
	\Box_n:=&2 \bm{*} \left(-\frac{1}{n}:-\frac{1}{n+1}\right)\bm{+}2\bm{*}\left(-1:\frac{1}{n}\right)\bm{+}2\bm{*}\left(\frac{1}{n+1}:1\right)\\
	&\bm{+}2\bm{*}\left[\frac{1}{n+1}\right]\bm{+}2\bm{*}\left[\frac{1}{n}\right]\bm{+}2\bm{*}\lc1\rc\bm{+}\gamma.
\end{align*}
By $(m+1)\cdot \Box_n$ we denote the $\supst{(m+1)}$ dilate of $\Box_n$, where the dilate of a disjoint union is the disjoint union of the dilates.
\begin{lemma}\label{lemma:Q}
For any $m\geq 1$,
	\[
	\delta_n(m)=\sum_{(x,y)\in ((m+1)\cdot \Box_n)\cap\ZZ^2} y.
\]
\end{lemma}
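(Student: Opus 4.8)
The plan is to sum the identity of Lemma \ref{lemma:delta} (equivalently, the piecewise-linear description from Lemma \ref{lemma:linearity}) over $u\in M=\ZZ^2$, and to recognize the resulting total as a lattice-point count over the dilated disjoint union $(m+1)\cdot\Box_n$, weighted by the second coordinate $y$. First I would invoke Lemma \ref{lemma:linearity}: $\delta_n(m,u)$ is supported on the six simplices $\nabla_1,\dots,\nabla_6$, and on each $\nabla_k$ it is the unique affine function taking the value $(m+1)/2$ at the common vertex $(0,-(m+1)/2+1)$ and $0$ at the two other vertices. Hence
\[
\delta_n(m)=\sum_{k=1}^{6}\ \sum_{u\in\nabla_k\cap\ZZ^2}\delta_n(m,u),
\]
being careful about the shared boundaries of the $\nabla_k$ (the vertex $(0,-(m+1)/2+1)$ and the edges through it) so that each lattice point is counted with the correct multiplicity; since $\delta_n(m,u)$ vanishes on those shared edges except at the apex, the only bookkeeping needed is at the apex itself, where the contributions from the various $\nabla_k$ must be reconciled.

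The key computational step is to evaluate $\sum_{u\in\nabla_k\cap\ZZ^2}\delta_n(m,u)$ for a single simplex. Each $\nabla_k$ has the apex $A=(0,-(m+1)/2+1)$ and two further vertices lying on the line $y=1$ or $y=-m$; on $\nabla_k$ the affine function $\delta_n(m,\cdot)$ depends only on an affine coordinate that vanishes on the edge opposite $A$ and equals $(m+1)/2$ at $A$. I would reparametrize so that this coordinate becomes (a rescaling of) the vertical coordinate measured from the far edge, so that summing $\delta_n(m,u)$ over the lattice points of $\nabla_k$ at a fixed "height" $h$ contributes (number of lattice points at that height) $\times$ (linear function of $h$). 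Carrying this out and comparing with the definition of $\Box_n$ — whose constituent half-open triangles $\lc a:b\rc$ are exactly cones over the segment $[(a,0),(b,0)]$ with apex $\gamma=(0,1/2)$, so that the $y$-coordinate of a point of $(m+1)\lc a:b\rc$ is precisely the weight attached to its "slice" — one matches the six simplices $\nabla_1,\dots,\nabla_6$ (with the shared apex contributing the pieces $2\bm{*}\lc 1/(n+1)\rc$, $2\bm{*}\lc 1/n\rc$, $2\bm{*}\lc 1\rc$ and $\gamma$) against the six half-open triangles and isolated segments in the disjoint union $\Box_n$, the factor $2$ reflecting the symmetry $u\mapsto -u$ under which $\delta_n(m,\cdot)$ and the configuration of the $\nabla_k$ are invariant.

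The main obstacle I anticipate is the careful half-open bookkeeping: the $\nabla_k$ overlap along their boundaries, the polytopes in $\Box_n$ are deliberately half-open (with the removed faces recorded via the $\lo\cdot:\cdot\ro$, $\lc\cdot:\cdot\ro$ notation), and the vertices of the $\nabla_k$ have coordinates like $\pm(m+1)/n$ and $\pm(m+1)/(n+1)$ that are integral only for special residues of $m$. Getting every open/closed face to line up — in particular verifying that the closed segments $2\bm{*}\lc 1/(n+1)\rc$, $2\bm{*}\lc 1/n\rc$, $2\bm{*}\lc 1\rc$ together with the single point $\gamma$ exactly account for the lattice points on the shared edges and apex of the six simplices, with the right multiplicities coming from how many $\nabla_k$ meet there — is the delicate part. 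Once the boundary combinatorics is pinned down, the identity follows by summing the per-slice contributions, since the $y$-weight in $(m+1)\cdot\Box_n$ was defined precisely to record the value of the affine function $\delta_n(m,\cdot)$ on the corresponding slice.
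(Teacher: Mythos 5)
Your overall strategy is the one the paper uses: restrict to the six simplices via Lemma \ref{lemma:linearity}, identify each simplex with a dilated triangle of the form $(m+1)\cdot\lc a:b\rc$ in such a way that the value of $\delta_n(m,\cdot)$ becomes the second coordinate, and then do the half-open/inclusion--exclusion bookkeeping to land on $\Box_n$. However, as written, two steps would fail. First, the configuration is \emph{not} invariant under $u\mapsto -u$: for instance the vertex $(-m-1,1)$ would map to $(m+1,-1)$, which is not a vertex of any $\nabla_i$. The symmetry that pairs $\nabla_i$ with $\nabla_{i+3}$ (and hence produces the factors $2\bm{*}$ in $\Box_n$) is the point reflection about the apex $(0,-(m+1)/2+1)$, i.e.\ $(x,y)\mapsto(-x,1-m-y)$; leaning on $u\mapsto-u$ to explain the multiplicity $2$ is a step that does not go through. (Relatedly, your claim that the two non-apex vertices of each $\nabla_k$ lie on $y=1$ or $y=-m$ is false for the four simplices having $(\mp\tfrac{m+1}{n},\,\cdot\,)$ as a vertex.)

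Second, the slice-by-slice matching is exactly where the real work lies, and ``reparametrize so that this coordinate becomes (a rescaling of) the vertical coordinate'' is not sufficient: to conclude that the number of lattice points of $\nabla_k$ on the level set $\delta_n(m,\cdot)=y$ equals the number of lattice points of $(m+1)\lc a:b\rc$ at height $y$, you need a \emph{lattice-preserving} (integral, unimodular) affine map carrying one onto the other; a rescaling of the height coordinate does not preserve lattice counts per slice. This is precisely what the paper supplies: an explicit table of unimodular affine maps $\phi_1,\dots,\phi_6$ (e.g.\ $(x,y)\mapsto(x,y+m)$ on $\nabla_1$, $(x,y)\mapsto(-x,-x-y+1)$ on $\nabla_2$, $(x,y)\mapsto(x,(n+1)x+y+m)$ on $\nabla_3$), chosen so that $\delta_n(m,\phi_i^{-1}(x,y))=y$ and so that $\phi_i$ and $\phi_{i+1}$ agree along $\nabla_i\cap\nabla_{i+1}$; the latter compatibility is what makes your ``delicate boundary bookkeeping'' come out as the half-open pieces and closed segments in $\Box_n$. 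Without exhibiting such maps (or some equivalent lattice-preserving identification), the matching with $\Box_n$ remains an assertion rather than a proof.
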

\begin{proof}
	To each polytope $\nabla_i$ from Figure \ref{fig:linearity}, we will apply an invertible integral affine linear transformation $\phi_i$:
\[
	\def\arraystretch{1.2}
	\begin{array}{c c c}
		\textrm{Polytope} & \textrm{Transformation }\phi_i & \textrm{Image}
		\\
		\hline
		\nabla_1 & (x,y)\mapsto (x,y+m) & (m+1)\cdot \lc \frac{1}{n+1}:1\rc\\
		\nabla_2 & (x,y)\mapsto (-x,-x-y+1) & (m+1)\cdot \lc -1:\frac{1}{n}\rc\\
		\nabla_3 & (x,y)\mapsto (x,(n+1)x+y+m) & (m+1)\cdot \lc \frac{-1}{n}:\frac{-1}{n+1}\rc\\
		\nabla_4 & (x,y)\mapsto (-x,-y+1) & (m+1)\cdot \lc \frac{1}{n+1}:1\rc\\
		\nabla_5 & (x,y)\mapsto (x,x+y+m) & (m+1)\cdot \lc -1:\frac{1}{n}\rc\\
		\nabla_6 & (x,y)\mapsto (-x,-(n+1)x-y+1) & (m+1)\cdot \lc \frac{-1}{n}:\frac{-1}{n+1}\rc\\
	\end{array}
\]
Note that the transformations $\phi_i$ and $\phi_{i+1}$ agree along $\nabla_i\cap \nabla_{i+1}$, with indices taken modulo $6$. 
It follows from Lemma \ref{lemma:linearity} that for each $i$ and each $(x,y)\in \phi_i(\nabla_i)$, we have
\[
	\delta_n\left(m,\phi_i^{-1}((x,y))\right)=y.
\]
Again using Lemma \ref{lemma:linearity}, we have
\begin{align*}
	\delta_n(m)=\sum_{u\in (\bigcup \nabla_i)\cap M} \delta_n(m,u).
\end{align*}
Applying $\phi_i$ to each $\nabla_i$ and using inclusion-exclusion, we obtain
the claim of the lemma.
\end{proof}

We are now able to use induction to obtain a formula for $\chi(n,m)$ as a weighted lattice point count.
Using notation introduced at the start of this subsection, define
\[
	\Delta_n=2\bm{*}\left(\frac{1}{n+1}:2(n+1)-\frac{1}{n+1}\right]\bm{+}n\bm{*}\gamma.
\]
\begin{theorem}\label{thm:delta}
For $n,m\geq 1$ we have
\[
	\chi(n,m)=\sum_{(x,y)\in ((m+1)\cdot \Delta_n)\cap \ZZ^2} y.
\]
\end{theorem}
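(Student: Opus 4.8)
The plan is to induct on $n$. For the base case $n=0$ both sides vanish: $\chi(0,m)=0$ by convention, while $\Delta_0=2\bm{*}\lo 1:1\rc$ is empty because $\lo a:a\rc=\lc a:a\rc\setminus\lc a\rc=\emptyset$, so the $y$-weighted count over $(m+1)\cdot\Delta_0$ is $0$. For the inductive step, with $n\ge 1$ and $m\ge 1$, Lemma~\ref{lemma:induct} gives $\chi(n,m)=\chi(n-1,m)+\delta_n(m)$; by the inductive hypothesis $\chi(n-1,m)=\sum_{(x,y)\in((m+1)\Delta_{n-1})\cap\ZZ^2}y$, and by Lemma~\ref{lemma:Q} $\delta_n(m)=\sum_{(x,y)\in((m+1)\Box_n)\cap\ZZ^2}y$. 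So it is enough to show that $\Delta_{n-1}\bm{+}\Box_n$ and $\Delta_n$ carry the same $y$-weighted lattice point count in every integer dilate.

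To prove this I would set up a small scissors calculus. Call two formal disjoint unions $A,B$ of the half-open triangles $\lc a:b\rc$, the degenerate triangles $\lc a\rc$, and copies of $\gamma$ \emph{equivalent}, written $A\equiv B$, if $\sum_{(x,y)\in(tA)\cap\ZZ^2}y=\sum_{(x,y)\in(tB)\cap\ZZ^2}y$ for all integers $t\ge 1$; this relation is additive, unaffected by cancelling equal summands, and satisfies $2\bm{*}A\equiv 2\bm{*}B\iff A\equiv B$. Two kinds of moves preserve it: (i) the set-theoretic partitions $\lc a:b\rc=\lo a:b\ro\sqcup\lc a\rc\sqcup\lc b\rc$, $\lo a:b\rc=\lo a:b\ro\sqcup\lc b\rc$, $\lc a:b\ro=\lo a:b\ro\sqcup\lc a\rc$, and the cevian cut $\lo a:b\rc=\lo a:c\rc\sqcup\lo c:b\rc$ for $a\le c\le b$ (partitions of subsets of $\RR^2$, hence stable under dilation); and (ii) the reflection $(x,y)\mapsto(-x,y)$, an automorphism of $\ZZ^2$ fixing every $y$-coordinate and commuting with dilation, sending $\lc a:b\rc$ to $\lc-b:-a\rc$ and fixing $\gamma$. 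Cutting the triangle $\lo\tfrac1{n+1}:2n+2-\tfrac1{n+1}\rc$ underlying $\Delta_n$ at $x=\tfrac1n$ and $x=2n-\tfrac1n$ partitions it as $\lo\tfrac1{n+1}:\tfrac1n\rc\sqcup\lo\tfrac1n:2n-\tfrac1n\rc\sqcup\lo 2n-\tfrac1n:2n+2-\tfrac1{n+1}\rc$, whose middle part is exactly the triangle of $\Delta_{n-1}$; accounting for coefficients this gives
\[
\Delta_n=\Delta_{n-1}\ \bm{+}\ 2\bm{*}\lo\tfrac1{n+1}:\tfrac1n\rc\ \bm{+}\ 2\bm{*}\lo 2n-\tfrac1n:2n+2-\tfrac1{n+1}\rc\ \bm{+}\ \gamma,
\]
so the task becomes $\Box_n\equiv 2\bm{*}\lo\tfrac1{n+1}:\tfrac1n\rc\bm{+}2\bm{*}\lo 2n-\tfrac1n:2n+2-\tfrac1{n+1}\rc\bm{+}\gamma$. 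Reflecting the summand $\lo-\tfrac1n:-\tfrac1{n+1}\ro$ of $\Box_n$ onto $\lo\tfrac1{n+1}:\tfrac1n\ro$, then using the rules in (i) to absorb degenerate triangles via $\lo\tfrac1{n+1}:\tfrac1n\ro\bm{+}\lc\tfrac1n\rc=\lo\tfrac1{n+1}:\tfrac1n\rc$ and $\lo\tfrac1{n+1}:1\ro\bm{+}\lc\tfrac1{n+1}\rc\bm{+}\lc 1\rc=\lc\tfrac1{n+1}:1\rc$, and finally cancelling the common summands $\gamma$ and $2\bm{*}\lo\tfrac1{n+1}:\tfrac1n\rc$, the claim reduces to the single congruence
\begin{equation*}
\lo -1:\tfrac1n\ro\ \bm{+}\ \lc\tfrac1{n+1}:1\rc\ \equiv\ \lo 2n-\tfrac1n:2n+2-\tfrac1{n+1}\rc. \tag{$\ast$}
\end{equation*}

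Finally I would prove $(\ast)$ by a slice count. All three triangles have apex $(0,\tfrac12)$, so after dilating by $t=m+1$ they all have apex $(0,\tfrac t2)$, and it suffices to verify that at each height $y\in\{0,1,2,\dots\}$ the three horizontal slices contain equally many lattice points; the $y$-weighted totals then match level by level. Writing $s=t-2y$, the slice of $t\cdot\lo-1:\tfrac1n\ro$ is the open interval $(-s,\tfrac sn)$ with $s-1+\lceil\tfrac sn\rceil$ integers, the slice of $t\cdot\lc\tfrac1{n+1}:1\rc$ is $[\tfrac s{n+1},s]$ with $s+1-\lceil\tfrac s{n+1}\rceil$ integers, and the slice of $t\cdot\lo 2n-\tfrac1n:2n+2-\tfrac1{n+1}\rc$ is $\bigl((2n-\tfrac1n)s,(2n+2-\tfrac1{n+1})s\bigr]$ with $2s+\lceil\tfrac sn\rceil-\lceil\tfrac s{n+1}\rceil$ integers; the sum of the first two equals the third, proving $(\ast)$ and closing the induction. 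I expect the main obstacle to be the bookkeeping in the reduction to $(\ast)$: one must track precisely which slanted edges (the degenerate triangles $\lc a\rc$) belong to which piece as the triangles are subdivided and reflected, so that every correction term cancels. The slice count proving $(\ast)$ is elementary, requiring only mild care at the values of $s$ divisible by $n$ or by $n+1$ and near the apex level.
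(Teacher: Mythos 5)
Your proof is correct, and its skeleton is the same as the paper's: both arguments rest on Lemma~\ref{lemma:induct} and Lemma~\ref{lemma:Q}, and reduce the theorem to a scissors-type claim that the boxes assemble into $\Delta_n$ under operations preserving $y$-weighted lattice point counts in every dilate. Where you genuinely diverge is in how that combinatorial step is executed. The paper treats the whole telescoping sum at once, asserting ``up to integral translation in the $x$ direction'' that $\Box_1\bm{+}\cdots\bm{+}\Box_n\equiv\Delta_n$ and leaving the verification as straightforward; you instead prove the incremental congruence $\Delta_{n-1}\bm{+}\Box_n\equiv\Delta_n$ by induction on $n$, replace translations by the reflection $(x,y)\mapsto(-x,y)$, and settle the one residual congruence $(\ast)$ by an exact height-by-height count, which boils down to the identity $(s-1+\lceil s/n\rceil)+(s+1-\lceil s/(n+1)\rceil)=2s+\lceil s/n\rceil-\lceil s/(n+1)\rceil$ with $s=t-2y$. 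I checked your bookkeeping: the cevian cut $\lo a:b\rc=\lo a:c\rc\sqcup\lo c:b\rc$, the absorption of the degenerate triangles, the cancellation of $\gamma$ and of $2\bm{*}\lo\tfrac1{n+1}:\tfrac1n\rc$, and the division by $2$ are all legitimate for this equivalence (including the coincidence $\lc\tfrac1n\rc=\lc1\rc$ when $n=1$, since multiplicities are respected); the apex level $s=0$ is harmless because $\gamma$ is excluded from every triangle; and your base case $n=0$ is consistent with the paper's convention $\chi(0,m)=0$ together with $\Delta_0=\emptyset$, with Lemma~\ref{lemma:induct} supplying every step $n\geq1$. What your route buys is that the step the paper waves through is made completely explicit and checkable slice by slice; what the paper's route buys is brevity, handling all boxes in one pass by integral translations without any slicing.
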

\begin{proof}
Up to integral translation in the $x$-direction and reflection around the line $x=0$, we have
\[\Box_n\equiv 2\bm{*}\left(\left(-\frac{1}{n}:-\frac{1}{n+1}\right]\bm{+}\left(\frac{1}{n+1}:2+\frac{1}{n}\right]\right)\bm{+}\gamma.
\]
Indeed, 
\[\left(-\frac{1}{n}:-\frac{1}{n+1}\right)\bm{+}\left[\frac{1}{n+1}\right]\equiv\left(-\frac{1}{n}:-\frac{1}{n+1}\right)\bm{+}\left[\frac{-1}{n+1}\right]\equiv \left(-\frac{1}{n}:-\frac{1}{n+1}\right]\]
and 
\begin{align*}
	\left(-1:\frac{1}{n}\right)\bm{+}\left(\frac{1}{n+1}:1\right)\bm{+}\left[ \frac{1}{n}\right]\bm{+}\left[ 1\right]
	&\equiv \left( 1:2+\frac{1}{n}\right)\bm{+}\left(\frac{1}{n+1}:1\right)\bm{+}\left[ 2+\frac{1}{n}\right]\bm{+}\left[ 1\right]\\
	&\equiv \left(\frac{1}{n+1}:2+\frac{1}{n}\right].
\end{align*}

By translating the set $\lo\frac{1}{k+1}:2+\frac{1}{k}\rc$ in $\Box_k$ by $2(n-k)$ to the right, it is straightforward to see that
\begin{align*}
	\Box_1\bm{+}\cdots\bm{+}\Box_n\equiv2\bm{*}\left( \left(-1:-\frac{1}{n+1}\right]\bm{+}\left(\frac{1}{n+1},2n+1\right]\right)\bm{+}n\bm{*}\gamma \equiv \Delta_n.
\end{align*}
Since $\chi(n,m)=\delta_1(m)+\dots+\delta_n(m)$ by Lemma \ref{lemma:induct}, the claim of the theorem follows from Lemma \ref{lemma:Q}.
\end{proof}

\subsection{Generating functions}\label{S:generating_functions}
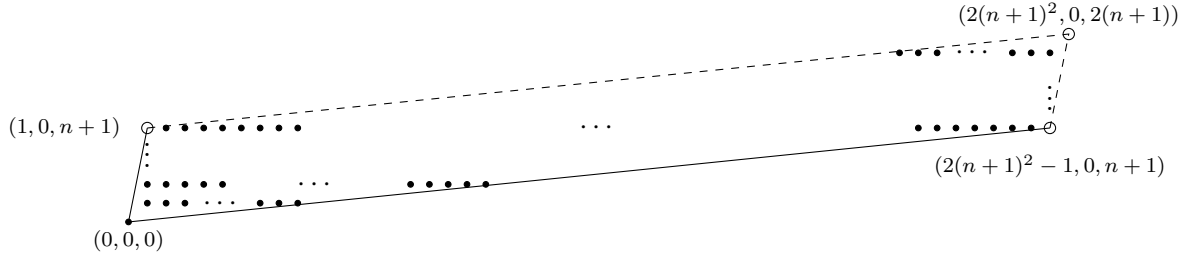
\begin{figure}
	\centering
	\begin{tikzpicture}[scale=.25]
\draw (0,0) --(1,5);
\draw (0,0) --(49,5);
\draw[dashed] (1,5) -- (50,10) -- (49,5);
\filldraw (0,0) circle [radius=.15];
\filldraw (1,1) circle [radius=.15];
\filldraw (2,1) circle [radius=.15];
\filldraw (3,1) circle [radius=.15];
\filldraw (7,1) circle [radius=.15];
\filldraw (8,1) circle [radius=.15];
\filldraw (9,1) circle [radius=.15];
\filldraw (1,2) circle [radius=.15];
\filldraw (2,2) circle [radius=.15];
\filldraw (3,2) circle [radius=.15];
\filldraw (4,2) circle [radius=.15];
\filldraw (5,2) circle [radius=.15];

\filldraw (15,2) circle [radius=.15];
\filldraw (16,2) circle [radius=.15];
\filldraw (17,2) circle [radius=.15];
\filldraw (18,2) circle [radius=.15];
\filldraw (19,2) circle [radius=.15];

\filldraw (2,5) circle [radius=.15];
\filldraw (3,5) circle [radius=.15];
\filldraw (4,5) circle [radius=.15];
\filldraw (5,5) circle [radius=.15];
\filldraw (6,5) circle [radius=.15];
\filldraw (7,5) circle [radius=.15];
\filldraw (8,5) circle [radius=.15];
\filldraw (9,5) circle [radius=.15];

\filldraw (48,5) circle [radius=.15];
\filldraw (47,5) circle [radius=.15];
\filldraw (46,5) circle [radius=.15];
\filldraw (45,5) circle [radius=.15];
\filldraw (44,5) circle [radius=.15];
\filldraw (43,5) circle [radius=.15];
\filldraw (42,5) circle [radius=.15];
\draw (49,5) circle [radius=.3];
\draw (1,5) circle [radius=.3];
\draw (50,10) circle [radius=.3];

\node at (5,1) {$\cdots$};
\node at (10,2) {$\cdots$};
\node at (25,5) {$\cdots$};
\node at (45,9) {$\cdots$};
\node at (1,4) {$\vdots$};
\node at (49,7) {$\vdots$};
\scriptsize{
\node [below] at (0,0) {$(0,0,0)$};
\node [left] at (0,5) {$(1,0,n+1)$};
\node [above] at (50,10) {$\left(2(n+1)^2,0,2(n+1)\right)$};
\node [below] at (49,4) {$\left(2(n+1)^2-1,0,n+1\right)$};
}

\filldraw (41,9) circle [radius=.15];
\filldraw (42,9) circle [radius=.15];
\filldraw (43,9) circle [radius=.15];
\filldraw (47,9) circle [radius=.15];
\filldraw (48,9) circle [radius=.15];
\filldraw (49,9) circle [radius=.15];

\end{tikzpicture}
	\caption{Lattice points of the region $\Pi(C_1)$}\label{fig:pi}
\end{figure}
\begin{lemma}\label{lemma:gen}
The regular generating function for $\chi(n,m)$ as a function of $m$ is
\[
	\sum_{m\geq 0} \chi(n,m)z^m=
\frac{z\cdot
	\left( (n+1)(1+z+\dots+z^{n})^2-(1+z^2+\dots+z^{2n}) \right)
}{(1-z)^2(1-z^{n+1})^2}. 
\]
\end{lemma}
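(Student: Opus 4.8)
The plan is to combine Theorem~\ref{thm:delta}, which expresses $\chi(n,m)$ as the weighted lattice point count $\sum_{(x,y)\in((m+1)\cdot\Delta_n)\cap\ZZ^2} y$, with the lattice point transform machinery of Proposition~\ref{prop:lpt}. First I would promote the weighted count over the two-dimensional region $\Delta_n$ to an unweighted count over a three-dimensional object: the trick is that $\sum y = \sum_{(x,y)} \#\{0 \le z < y\}$, so introducing a third coordinate $z$ replaces the weight $y$ by the number of integral points of a cone-like region sitting over $\Delta_n$. Concretely, $\chi(n,m)$ becomes the number of lattice points of the $(m+1)$st dilate of a fixed three-dimensional polyhedral region, and summing $z^m$ over $m\ge 0$ turns this into the lattice point transform of a single three-dimensional cone $C$ (after homogenizing, i.e. coning over the dilation parameter). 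This is exactly the setup of Proposition~\ref{prop:lpt}.

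Next I would identify this cone explicitly. Recalling $\Delta_n = 2\bm{*}\lo\frac{1}{n+1}:2(n+1)-\frac{1}{n+1}\rc\bm{+}n\bm{*}\gamma$, the apex region $\lc a:b\rc$ is the triangle $\Conv\{(a,0),(b,0),(0,1/2)\}$ with the top vertex removed, so after clearing denominators the relevant lattice triangle has vertices that, under the homogenization, generate a simplicial cone $C_1\subset\RR^3$ whose rays I would read off as $(0,0,0)$-based directions toward $(1,0,n+1)$ and $(2(n+1)^2,0,2(n+1))$ and the dilation direction; these are precisely the vertices appearing in Figure~\ref{fig:pi}. The figure already displays $\Pi(C_1)$, the fundamental half-open parallelepiped of the cone, so the main computational task is to tabulate $\lpt_{\Pi(C_1)}$. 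Counting those lattice points row by row (the picture organizes them into rows indexed by the last coordinate $0,1,2,\ldots$, with the row at height $j$ contributing a consecutive block of $z$-powers) yields a closed form, and the $\gamma$-summands and the doubling factor $2\bm{*}$ contribute the overall factor and account for the shift by $1/2$ in the apex.

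Then I would assemble the pieces: by Proposition~\ref{prop:lpt}, $\lpt_{C_1}(z) = \lpt_{\Pi(C_1)}/\big((1-z^{w_1})(1-z^{w_2})(1-z^{w_3})\big)$ where the $w_i$ are the primitive ray generators; in the homogenized coordinate the exponents that survive in the generating function in $z$ (the dilation variable) are $(1-z)^2(1-z^{n+1})^2$ in the denominator, matching the claimed formula. The numerator $z\cdot\big((n+1)(1+z+\cdots+z^n)^2 - (1+z^2+\cdots+z^{2n})\big)$ should then drop out as the specialization of $\lpt_{\Pi(C_1)}$ to the dilation variable, where the $(n+1)$-fold term comes from the bulk of the triangle scaled by $2(n+1)$ and the subtracted term $1+z^2+\cdots+z^{2n}$ accounts for the half-open boundary removed in the definition of $\lc\,{\cdot}\,\rc$ and $\lo\,{\cdot}\,\ro$. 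I would verify the identification either by a direct lattice-point count in $\Pi(C_1)$ as suggested by Figure~\ref{fig:pi}, or by cross-checking low-degree coefficients against Lemma~\ref{lemma:Q} and Theorem~\ref{thm:delta} for small $m$ and $n$.

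The main obstacle I anticipate is bookkeeping the half-open structure correctly. The region $\Delta_n$ is a disjoint union of half-open triangles with their top vertices deleted and with an additional $n\bm{*}\gamma$ of copies of the point $(0,1/2)$ added back; tracking which boundary faces are included after homogenizing to a cone, and making sure the deleted-vertex contributions and the $\gamma$-copies combine to exactly the correction term $-(1+z^2+\cdots+z^{2n})$ rather than something off by a boundary stratum, requires care. A secondary subtlety is that $1/2$ is not an integer, so the coning must be done after clearing the denominator $2$ (hence the $2\bm{*}$ and the factor $2(n+1)$ in the triangle), and one must check that the resulting cone is genuinely simplicial with the stated primitive generators so that Proposition~\ref{prop:lpt} applies verbatim. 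Once the parallelepiped count in Figure~\ref{fig:pi} is pinned down, the rest is routine geometric series manipulation.
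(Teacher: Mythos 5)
Your high-level route (start from Theorem~\ref{thm:delta}, cone over the triangles making up $\Delta_n$, apply Proposition~\ref{prop:lpt}, compute a fundamental parallelepiped) is the paper's, but the central mechanism for the weight $y$ is not carried through consistently, and this is a genuine gap. Your idea of trading the weight for an extra coordinate via $\sum y=\sum\#\{0\leq w<y\}$ is legitimate in principle, but it produces a \emph{four}-dimensional cone (the two coordinates of $\Delta_n$, the new weight coordinate, and the homogenizing dilation coordinate), whose fundamental parallelepiped is \emph{not} the region $\Pi(C_1)$ of Figure~\ref{fig:pi}. As written, after announcing the lift you immediately revert to the three-dimensional cone $C_1$ and the parallelepiped count of Figure~\ref{fig:pi}; but the lattice points of that cone at level $m+1$ count $\#\bigl((m+1)\Delta_n\cap\ZZ^2\bigr)$ with no weight, so your plan taken literally yields the unweighted Ehrhart series, not $\sum_{(x,y)}y$. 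The paper keeps the three-dimensional cones and extracts the weight by applying $\partial/\partial y$ to $\lpt_{C_i}(x,y,z)$ and evaluating at $x=y=1$; this differentiation is also the source of the second factor of $(1-z^2)$ in the denominator (which, after cancelling $(1+z)^2$ from the numerator, gives the $(1-z)^2$ in the statement). Your proposal supplies neither the derivative trick nor the genuinely four-dimensional computation that the lift would require, so the numerator and the denominator cannot actually be produced from the steps you describe. (If you did carry out the lift honestly, the four ray generators would again give $(1-z^{n+1})^2(1-z^2)^2$, so the idea is salvageable, but the parallelepiped computation would be a new one, not Figure~\ref{fig:pi}.)

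Two further points need repair. First, the rays you read off for $C_1$ are wrong: the primitive generators are $(1,0,n+1)$, $(2(n+1)^2-1,0,n+1)$, and $(0,1,2)$, the last coming from the apex $\gamma=(0,1/2)$ of the triangles, not from ``the dilation direction''; the point $(2(n+1)^2,0,2(n+1))$ is the sum of the first two generators, i.e.\ the excluded far corner of $\Pi(C_1)$, not a ray. Second, the half-open structure is not absorbed into the numerator in the loose way you suggest: one needs the precise inclusion–exclusion $2\bigl(\lpt_{C_1}-\lpt_{C_2}\bigr)+n\,\lpt_{C_3}$ coming from $\Delta_n=2\bm{*}\lo\tfrac{1}{n+1}:2(n+1)-\tfrac{1}{n+1}\rc\bm{+}n\bm{*}\gamma$, and it is the resulting combination $2\lpt_{\Pi(C_1)}(1,1,z)-2(1-z^{n+1})+n(1-z^{n+1})^2=(1+z)^2\bigl((n+1)(1+\cdots+z^n)^2-(1+z^2+\cdots+z^{2n})\bigr)$ that produces the numerator; attributing the term $-(1+z^2+\cdots+z^{2n})$ solely to removed boundary faces is not a computation and would be easy to get wrong by a boundary stratum, exactly the pitfall you flag.
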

\begin{proof}
Consider the cones
\begin{align*}
	C_1&=\Pos\left\{(1,0,n+1),(2(n+1)^2-1,0,n+1),(0,1,2)\right\},\\
	C_2&=\Pos\{(1,0,n+1),(0,1,2)\},\\
	C_3&=\Pos\{(0,1,2)\},
\end{align*}
where $\Pos$ denotes the positive hull. These are the cones in $\RR^3$ over $\lc \frac{1}{n+1}:2(n+1)-\frac{1}{n+1}\rc$, $\lc \frac{1}{n+1}\rc $, and $\gamma$.

Using variables $x,y,z$ and following notation from Proposition \ref{prop:lpt}, we have
\begin{equation}\label{eqn:lpt}
	\lpt_{\Pi(C_1)}=1+\left(\sum_{k=1}^{n} \sum_{j=1}^{2(n+1)k-1}(x^jz^k+x^{2(n+1)^2-j}z^{2(n+1)-k}) \right)+\sum_{j=2}^{2(n+1)^2-2}x^jz^{n+1}.
\end{equation}
Indeed, fixing the third coordinate equal to some integer $k$ with $1\leq k \leq n$, $\Pi(C_1)$ contains lattice points with first coordinate ranging from 
\[\left\lceil \frac{k}{n+1} \right\rceil=1\qquad \textrm{to}\qquad \left\lfloor 2(n+1)k-\frac{k}{n+1}\right\rfloor=2(n+1)k-1.\] For third coordinate equal to $n+1$, we have lattice points with first coordinate ranging from $2$ up to $2(n+1)^2-2$. Points with third coordinate larger than $n+1$ are obtained by reflecting points with $1\leq k \leq n$ through the point $(n+1,0,(n+1)^2)$. 
See Figure \ref{fig:pi}.

Further, we note that $\lpt_{\Pi(C_2)}=\lpt_{\Pi(C_3)}=1$. By Proposition \ref{prop:lpt} we conclude that
\begin{align*}
	\lpt_{C_1}&=\frac{\lpt_{\Pi(C_1)}}{(1-xz^{n+1})\left(1-x^{2(n+1)^2-1}z^{n+1}\right)}\cdot \frac{1}{(1-yz^2)},\\
	\lpt_{C_2}&=\frac{1}{(1-xz^{n+1})}\cdot\frac{1}{(1-yz^2)},\\
	\lpt_{C_3}&=\frac{1}{(1-yz^2)}.
\end{align*}

By the definition of $\Delta_n$, 
$\#((m+1)\cdot \Delta_n)\cap \ZZ^2$ is the coefficient of $z^{m+1}$ in
\[2\cdot \lpt_{C_1}(1,1,z)-2\cdot \lpt_{C_2}(1,1,z)+n\cdot \lpt_{C_3}(1,1,z).\]
Similarly, 
\[\sum_{(x,y)\in((m+1)\cdot \Delta_n)\cap \ZZ^2} y\]
is the coefficient of $z^{m+1}$ in 
\[2\cdot \frac{\partial \lpt_{C_1}}{\partial y}(1,1,z)-2\cdot \frac{\partial \lpt_{C_2}}{\partial y}(1,1,z)+n\cdot \frac{\partial \lpt_{C_3}}{\partial y}(1,1,z).\]
Applying 
Theorem \ref{thm:delta} and using the above expressions for the lattice point transforms, we obtain
\begin{align*}
\sum_{m\geq 0} \chi(n,m)z^m&=
\frac{1}{z}\left(2\cdot \frac{\partial \lpt_{C_1}}{\partial y}(1,1,z)-2\cdot \frac{\partial \lpt_{C_2}}{\partial y}(1,1,z)+n\cdot \frac{\partial \lpt_{C_3}}{\partial y}(1,1,z)\right)\\
&=\frac{z}{(1-z^2)^2}\cdot \frac{2\lpt_{\Pi(C_1)}(1,1,z)-2(1-z^{n+1})+n(1-z^{n+1})^2}{(1-z^{n+1})^2}.
\end{align*}
The claim of the lemma follows from Lemma \ref{lemma:tedious} below.
\end{proof}
\begin{lemma}\label{lemma:tedious}
We have that
\begin{align*}
&2\lpt_{\Pi(C_1)}(1,1,z)-2\left(1-z^{n+1}\right)+n\left(1-z^{n+1}\right)^2\\
&=(1+z)^2\cdot \left((n+1)(1+z+\dots+z^n)^2-\left(1+z^2+\dots+z^{2n}\right)\right).
\end{align*}
\end{lemma}
\begin{proof}
	Using \eqref{eqn:lpt} we have that
	\[
		\lpt_{\Pi(C_1)}(1,1,z)=1+\left(\sum_{k=1}^n (2(n+1)k-1)\cdot \left(z^k+z^{2(n+1)-k}\right)\right)+(2(n+1)^2-2)z^{n+1}.
	\]
	Thus, the coefficients of $z^k$ on the left-hand side of the claimed equality in the statement of the lemma are symmetric around $z^{n+1}$ and are equal to
\begin{equation*}
  \begin{cases}n& k=0,\\
    4(n+1)k-2& 1\leq k \leq n,\\
4(n+1)^2-2-2n& k=n+1.
\end{cases}
  \end{equation*}
	The coefficients of the expansion of the right-hand side are clearly also symmetric around $z^{n+1}$. 
It is straightforward to expand the right-hand side and compare coefficients with the above.
\end{proof}

To determine a formula for $\chi(n,m)$, we will extract coefficients from its generating function.
We note that
\[
\frac{z\cdot
	\left( (n+1)\left(1+z+\dots+z^{n}\right)^2-\left(1+z^2+\dots+z^{2n}\right) \right)
}{(1-z)^2(1-z^{n+1})^2}=f(z)-g(z)\]
for
\[
	f(z)=\frac{(n+1)z}{(1-z)^4},\quad g(z)=\frac{z\cdot
	\left(1+z^2+\dots+z^{2n}\right) 
}{(1-z)^2(1-z^{n+1})^2}.
\]
\begin{lemma}\label{lemma:coeff}
There is an expansion of $g(z)$ as
\[g(z)=\frac{a_1z+\dots+a_{4n+1}z^{4n+1}
}{(1-z^{n+1})^4}
\]
for some coefficients $a_1,\ldots,a_{4n+1}$.
Set additionally $a_0=a_{4n+2}=a_{4n+3}=0$. Then for $q=0,\ldots,n$, we have
\begin{align*}
	a_q+a_{(n+1)+q}+a_{2(n+1)+q}+a_{3(n+1)+q}&=(n+1)^2,\\
	2a_q+a_{(n+1)+q}-a_{3(n+1)+q}&=(n+1)(q+1),\\
	11a_q+2a_{(n+1)+q}-1a_{2(n+1)+q}+2a_{3(n+1)+q}&=\begin{cases}
		\frac{1}{2}(n+1)^2+3q(q+2) & n\ \textrm{odd}, q\ \textrm{even},\\
		\frac{1}{2}(n+1)^2+3q(q+2)+3 & n\ \textrm{odd}, q\ \textrm{odd},\\
		\frac{1}{2}(n+1)^2+3q(q+2)+\frac{3}{2} & n\ \textrm{even},
	\end{cases}\\
	a_q&=\begin{cases}
		\frac{q(q+2)}{4} & q\ \textrm{even},\\
		\frac{(q+1)^2}{4} & q\ \textrm{odd}.\\
	\end{cases}
\end{align*}

\end{lemma}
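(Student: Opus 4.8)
The plan is to replace $g$ by a form with denominator $(1-z^{n+1})^4$. From $(1-z)^2(1-z^{n+1})^2=(1-z^{n+1})^4/(1+z+\cdots+z^n)^2$ we get
\[
g(z)=\frac{N(z)}{(1-z^{n+1})^4},\qquad N(z)=z\,(1+z^2+\cdots+z^{2n})\,(1+z+\cdots+z^n)^2,
\]
so $a_i=[z^i]N(z)$. Since $\deg N=4n+1$, the only coefficients in a fixed residue class $q$ modulo $n+1$ that can be nonzero are $c_0:=a_q$, $c_1:=a_{q+(n+1)}$, $c_2:=a_{q+2(n+1)}$, $c_3:=a_{q+3(n+1)}$. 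The four claimed equalities are four linear conditions on $(c_0,c_1,c_2,c_3)$; it therefore suffices to establish four independent relations, one of which (the last claimed equality) is simply the value of $c_0$.

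First I would compute $c_0=a_q$ for $0\le q\le n$. Only low-order terms of the factors of $N$ contribute, so $a_q=[z^{q-1}]\bigl((1+z^2+\cdots+z^{2n})(1+z+\cdots+z^n)^2\bigr)=\sum_{l\ge 0,\ 2l\le q-1}[z^{q-1-2l}](1+z+\cdots+z^n)^2$, and since $q-1-2l\le n-1$ the inner coefficient equals $q-2l$. Summing the arithmetic progression $\sum_l(q-2l)$ gives $q(q+2)/4$ for $q$ even and $(q+1)^2/4$ for $q$ odd, which is the last claimed equality.

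The remaining three relations come from a roots-of-unity filter. Write $P(z)=1+z^2+\cdots+z^{2n}$ and $Q(z)=1+z+\cdots+z^n$, so $N=zPQ^2$, and let $\zeta$ be a primitive $(n+1)$st root of unity; then $\sum_{i\equiv q\,(n+1)}f_i=\tfrac1{n+1}\sum_{l=0}^{n}\zeta^{-ql}F(\zeta^l)$ for any polynomial $F=\sum_i f_iz^i$. Applying this with $\theta:=z\tfrac{d}{dz}$ to $F=N,\ \theta N,\ \theta^2 N$ (where $[z^i]\theta^kN=i^k a_i$) computes $\sum_{i\equiv q}i^k a_i$ for $k=0,1,2$. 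The decisive point is that at $\zeta^l$ with $l\ne0$ the factor $Q$ has a simple zero, so $N(\zeta^l)=N'(\zeta^l)=0$ (each term of $N'$ retains a factor $Q$); in $N''$ only the term $2zP(Q')^2$ loses that factor, and $P(\zeta^l)=0$ unless $\zeta^{2l}=1$, which for $l\ne0$ occurs only when $n$ is odd and $\zeta^l=-1$. A product-rule evaluation at $z=1$, using $P(1)=Q(1)=n+1$, $P'(1)=n(n+1)$, $Q'(1)=\tfrac12n(n+1)$, $P''(1)=\tfrac13n(n+1)(4n-1)$, $Q''(1)=\tfrac13n(n-1)(n+1)$, gives $N(1)=(n+1)^3$, $N'(1)=(2n+1)(n+1)^3$, $(\theta^2N)(1)=(n+1)^3\bigl(\tfrac92n^2+5n+1\bigr)$, and for $n$ odd also $(\theta^2N)(-1)=N''(-1)=-\tfrac12(n+1)^3$. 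Substituting into the filter and re-expanding in $c_0,\dots,c_3$ yields
\[
\begin{aligned}
c_0+c_1+c_2+c_3&=(n+1)^2,\\
c_1+2c_2+3c_3&=(n+1)(2n+1-q),\\
c_1+4c_2+9c_3&=\tfrac92n^2+5n+1+q^2-4qn-2q+\varepsilon_n(q),
\end{aligned}
\]
where $\varepsilon_n(q)=0$ for $n$ even and $\varepsilon_n(q)=-\tfrac12(-1)^q$ for $n$ odd; this $(-1)^q$, inherited from the $\zeta^l=-1$ term, is the only source of the parity-of-$q$ split.

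It then remains to read off the claimed equalities as linear combinations of these relations. The first is the top relation. The second is (twice the top) minus the middle: $2(n+1)^2-(n+1)(2n+1-q)=(n+1)(q+1)$. The third is $11\cdot(\text{top})-12\cdot(\text{middle})+3\cdot(\text{bottom})$, which simplifies to $\tfrac12(n+1)^2+3q(q+2)+\tfrac32+3\varepsilon_n(q)$, and substituting the two values of $\varepsilon_n(q)$ yields the three right-hand sides in the statement. The one genuinely laborious step is the product-rule computation of $(\theta^2N)(1)$ and, in the odd case, of $N''(-1)$, together with correctly bookkeeping the $\zeta^l=-1$ contribution to the filter; everything after that is elementary linear algebra.
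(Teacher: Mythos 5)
Your argument is correct, and it reaches the four identities by a genuinely different mechanism than the paper. You share the first step: clearing $(1-z)^2$ against $(1-z^{n+1})^2$ to identify the numerator $N(z)=z(1+z^2+\cdots+z^{2n})(1+z+\cdots+z^n)^2$, whose degree $4n+1$ confines each residue class mod $n+1$ to the four coefficients $c_0,\dots,c_3$; and your direct convolution computation of $a_q$ for $q\le n$ is essentially the paper's reading of the low columns. But where the paper then computes all the needed column sums by inspection of an $n\times(4n+1)$ array of shifted rows (handling the parities of $n$ and $q$ case by case), you extract the three remaining relations from a roots-of-unity filter applied to $N$, $\theta N$, $\theta^2 N$: the factor $Q=1+z+\cdots+z^n$ kills every evaluation at a nontrivial $(n+1)$st root of unity except the single term $2zP(Q')^2$ at $\zeta^l=-1$ (only possible for $n$ odd), so the entire parity splitting is isolated in one correction term $\varepsilon_n(q)=-\tfrac12(-1)^q$, and everything else reduces to evaluations at $z=1$ plus linear algebra. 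I checked the evaluations ($N(1)=(n+1)^3$, $N'(1)=(2n+1)(n+1)^3$, $(\theta^2N)(1)=(n+1)^3(\tfrac92n^2+5n+1)$, $N''(-1)=-\tfrac12(n+1)^3$), the resulting relations on $c_0,\dots,c_3$, and the linear combinations $2(\mathrm{top})-(\mathrm{middle})$ and $11(\mathrm{top})-12(\mathrm{middle})+3(\mathrm{bottom})$; they reproduce exactly the right-hand sides in the statement, including the three-way case split. The trade-off: the paper's array argument is more elementary and visual but requires separate parity bookkeeping for each identity, whereas your filter argument concentrates the case analysis in one place at the cost of a second-derivative product-rule computation; it would also generalize more readily to higher moments, should one need them.
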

\begin{proof}
	The expansion is obtained by multiplying numerator and denominator of $g(z)$ by $(1+z+z^2+\dots+z^n)^2$. Doing so we obtain
\begin{equation}\label{eqn:g}
\left(z+z^3+\dots+z^{2n+1}\right)(1+z+z^2+\dots+z^n)^2=a_1z+\dots+a_{4n+1}z^{4n+1}.
\end{equation}

To compute the coefficients in the expansion of the left-hand side of \eqref{eqn:g}, we consider an $n\times (4n+1)$ array. The columns are labelled by $1,2,\ldots,4n+1$. The first row consists of the entries $1,2,3,\ldots,n+1,n,\ldots,2,1$, followed by zeros. More generally, the $\supth{i}$ row has non-zero entries obtained by shifting the non-zero entries of the first row $2i-2$ positions to the right. See Figure \ref{fig:coeff} for the examples $n=5$ and $n=6$.
Since the coefficients of $(1+z+z^2+\dots+z^n)^2$ are exactly the non-zero entries of the first row of the array, the coefficient $a_i$ is the sum of the entries of the $\supth{i}$ column of the array.

When $n$ is even, we see by inspection that for $q=0,\ldots,n$,
\[
a_q+a_{(n+1)+q}+a_{2(n+1)+q}+a_{3(n+1)+q}=1+2+3+\dots+(n+1)+n+\dots+1.
\]
Similarly, when $n$ is odd, for $q\leq n$ with $q$ even 
\[
a_q+a_{(n+1)+q}+a_{2(n+1)+q}+a_{3(n+1)+q}=2\cdot(2+4+\dots+(n+1)+(n-1)+\dots+2), 
\]
and for $q$ odd we instead have
\[
a_q+a_{(n+1)+q}+a_{2(n+1)+q}+a_{3(n+1)+q}=2\cdot(1+3+\dots+n+n+(n-1)+\dots+1).
\]
All three of these quantities evaluate to $(n+1)^2$. This shows the first desired identity.

For $i=0,\ldots,n+1$ we have by inspection
\[
	a_i=\begin{cases}
		\sum_{j=1}^{i/2} 2j=\frac{i(i+2)}{4} & i\ \textrm{even},\\[1ex]
		\sum_{j=1}^{(i+1)/2} (2j-1)=\frac{(i+1)^2}{4} & i\ \textrm{odd}.\\
	\end{cases}
\]
In particular, this implies the fourth identity.

We next consider the quantity $a_{(n+1)+q}-a_{3(n+1)+q}$ for $0\leq q \leq n$. This is the sum of the first $q+1$ entries in column $(n+1)+q$ and has the form
\[
	a_{(n+1)+q}-a_{3(n+1)+q}=\begin{cases}
		(n+1)+2\sum_{j=1}^{q/2} (n+1-2j) & q\ \textrm{even},\\[1ex]
		2\sum_{j=1}^{(q+1)/2} (n+2-2j) & q\ \textrm{odd}. \\
	\end{cases}
\]
Considering instead $2a_q+a_{(n+1)+q}-a_{3(n+1)+q}$, we obtain
\[
		(n+1)+2\sum_{j=1}^{q/2} ((n+1-2j)+2j)=(n+1)(q+1)
	\]
	for $q$ even and 
\[
		2\sum_{j=1}^{(q+1)/2} ((n+2-2j)+2j-1)=(n+1)(q+1)
	\]
for $q$ odd, proving the second identity.

For the coefficients $a_{2(n+1)+i}$ for $i\geq 0$, we have
\[
	a_{2(n+1)+i}=a_{2n-i}.
\]
We thus obtain
\begin{align*}
	11a_q&+2a_{(n+1)+q}-1a_{2(n+1)+q}+2a_{3(n+1)+q}\\
	&  = 6a_q+6a_{3(n+1)+q}
	-(a_q+a_{(n+1)+q}+a_{2(n+1)+q}+a_{3(n+1)+q})\\
	&\hphantom{=}\;+3(2a_q+a_{(n+1)+q}-a_{3(n+1)+q}) \\
	&= 6(a_q+a_{n-q-1})-(n+1)^2+3(n+1)(q+1).
\end{align*}
Using the above formula for $a_i$ ($i\leq n+1$) and substituting, one obtains the third identity.
\end{proof}

\begin{figure}
	{\scriptsize{\[
	\begin{array}{c c c c c | c c c c c c | c c c c c c | c c c c c c}
		1&2&3&4&5&6&7&8&9&10&11&12&13&14&15&16&17&18&19&20&21
		\\
	\hline	
		1&2&3&4&5&6&5&4&3&2&1&&&&&&\\
		&&1&2&3&4&5&6&5&4&3&2&1&&&&\\
		&&&&1&2&3&4&5&6&5&4&3&2&1&&&\\
		&&&&&&1&2&3&4&5&6&5&4&3&2&1&&\\
		&&&&&&&&1&2&3&4&5&6&5&4&3&2&1\\
		&&&&&&&&&&1&2&3&4&5&6&5&4&3&2&1\\
	\end{array}
\]
}}
\[
n=5
\]
	
\vspace{.5cm}
{\scriptsize{\[
	\begin{array}{c c c c c c | c c c c c c c | c c c c c c c | c c c c c c c}
		1&2&3&4&5&6&7&8&9&10&11&12&13&14&15&16&17&18&19&20&21&22&23&24&25
		\\
	\hline	
		1&2&3&4&5&6&7&6&5&4&3&2&1&&&&&&&\\
		&&1&2&3&4&5&6&7&6&5&4&3&2&1&&&&&&&\\
		&&&&1&2&3&4&5&6&7&6&5&4&3&2&1&&&&&\\
		&&&&&&1&2&3&4&5&6&7&6&5&4&3&2&1&&&\\
		&&&&&&&&1&2&3&4&5&6&7&6&5&4&3&2&1&&\\
		&&&&&&&&&&1&2&3&4&5&6&7&6&5&4&3&2&1\\
		&&&&&&&&&&&&1&2&3&4&5&6&7&6&5&4&3&2&1\\
	\end{array}
\]
}}
\[
n=6
\]
\caption{Example arrays from the proof of Lemma \ref{lemma:coeff}}\label{fig:coeff}
\end{figure}

\subsection{Proof of Theorem \ref{T:chiloc}}\label{proof:chiloc}
	We extract the coefficients in front of $z^{m}$ in the power series $f(z)$ and $g(z)$.
	For 
	\[f(z)=(n+1)z\cdot \left(\sum_{i\geq 0} z^i\right)^4
	\]
this coefficient extraction $[z^m]f(z)$ is straightforward, and we obtain
\begin{align*}
	[z^m]f(z)&=(n+1)\cdot \binom{m+2}{3}=\frac{(n+1)(m+2)(m+1)m}{6}\\
	&=\frac{(n+1)}{6}m^3+\frac{(n+1)}{2}m^2+\frac{(n+1)}{3}m.
\end{align*}

For $[z^m]g(z)$ we use the form of $g(z)$ from Lemma \ref{lemma:coeff} and obtain that $g(z)$ is equal to
\begin{align*}
\sum_{\substack{k\geq 0\\q=0,\ldots,n}}
\left(a_q\binom{k+3}{3}+a_{(n+1)+q}\binom{k+2}{3}+a_{2(n+1)+q}\binom{k+1}{3}+a_{3(n+1)+q}\binom{k}{3}\right)\cdot z^{k(n+1)+q}.
\end{align*}
For $m=k(n+1)+q$ with $q=0,\ldots,n$, and setting $p=n+1$ to simplify notation, it follows that
\begin{align*}
	[z^m]g(z)=a_q\binom{\frac{m-q}{p}+3}{3}+a_{p+q}\binom{\frac{m-q}{p}+2}{3}+a_{2p+q}\binom{\frac{m-q}{p}+1}{3}+a_{3p+q}\binom{\frac{m-q}{p}}{3}.
\end{align*}
We now expand as a polynomial in $m$ to obtain that $[z^m]g(z)$ is
\begin{align*}
	&\frac{1}{6p^{3}}\left(a_q+a_{p+q}+a_{2p+q}+a_{3p+q}\right)m^{3}
\\
&+\frac{1}{2p^3}\left(p\left(2a_q+a_{p+q}-a_{3p+q}\right)-q\left(a_q+a_{p+q}+a_{2p+q}+a_{3p+q}\right)\right)
     m^{2}
     \\
     &\begin{aligned}
        +\frac{1}{6p^3}\left(p^2\left(11a_q+2a_{p+q}-1a_{2p+q}+2a_{3p+q}\right)\right.-6qp\left(2a_q+a_{p+q}-a_{3p+q}\right)&\\
        \left.+3q^2\left(a_q+a_{p+q}+a_{2p+q}+a_{3p+q}\right)\right)&
        m\end{aligned}
      \\
      &\begin{aligned}
      +\frac{1}{6p^3}\left(6p^3a_q
      -qp^2\left(11a_q+2a_{p+q}-1a_{2p+q}\right.+2a_{3p+q}\right)
      +3p^3\left(2a_q+a_{p+q}-a_{3p+q}\right)&\\
	      \left.-q^3\left(a_q+a_{p+q}+a_{2p+q}+a_{3p+q}\right) \right)&.
        \end{aligned}
\end{align*}
Setting
\begin{align*}
	\alpha_1&=a_q+a_{(n+1)+q}+a_{2(n+1)+q}+a_{3(n+1)+q},\\
	\alpha_2&=2a_q+a_{(n+1)+q}-a_{3(n+1)+q},\\
	\alpha_3&=11a_q+2a_{(n+1)+q}-1a_{2(n+1)+q}+2a_{3(n+1)+q},
\end{align*}
we thus have
\begin{align*}
[z^m]g(z)
	=&\frac{1}{6\,p^{3}}\alpha_1 m^{3}
+\frac{1}{2p^3}\left(p\alpha_2-q\alpha_1\right)
     m^{2}
          +\frac{1}{6p^3}\left(p^2\alpha_3-6qp\alpha_2+3q^2\alpha_1\right)
     m\\
      &+\frac{1}{6p^3}\left(6p^3a_q
	      -qp^2\alpha_3
	      +3p^3\alpha_2
	      -q^3\alpha_1\right). 
\end{align*}

Using Lemma \ref{lemma:coeff} to substitute in for $a_q,\alpha_1,\alpha_2,\alpha_3$ and simplifying, we obtain that $[z^m]f(z)-[z^m]g(z)$ is exactly the quasi-polynomial appearing in the statement of Theorem \ref{T:chiloc}. The claim of the theorem thus follows from Lemma \ref{lemma:gen}.\qed

\section{Computation of \texorpdfstring{$\boldsymbol{\chi^0}$}{chi\textasciicircum 0}}
\subsection{A combinatorial formula}\label{sec:chi0formula}
Let $X=X_{\sigma_{A_n}}$ be the toric variety as described in  Example \ref{ex:An}, and let $Y=X_\Sigma$ with $\phi:Y\rightarrow X$ be the minimal resolution, where $\Sigma$ is the fan defined in Example \ref{ex:Anres}. The exceptional locus $E$ consists exactly of torus-invariant divisors $E_1=D_{\rho_1},\ldots,E_n=D_{\rho_n}$.
We shorten notation: 
\[V^i_m(u) = V_{{S}^m\Omega_Y^1}^{\rho_i}(\rho_i(u)).\]
We use that
\[\chi^0\left(s_n,S^m\Omega^1_Y\right)=\dim \frac{\H^0\left(Y\setminus E,S^m\Omega_Y^1\right)}{\H^0\left(Y,S^m\Omega_Y^1\right)}=\sum_{u \in M}
\dim \frac{\H^0\left(Y\setminus E,S^m\Omega_Y^1\right)_u}{\H^0\left(Y,S^m\Omega_Y^1\right)_u}.\]
By \eqref{eqn:global} we have
\[ \begin{aligned}
	\H^0\left(Y\setminus E,{S}^m\Omega_Y^1\right)_u &= V^0_m(u)\cap V^{n+1}_m(u),\\
	\H^0\left(Y,{S}^m\Omega_Y^1\right)_u &= \bigcap_{i=0}^{n+1} V^i_m(u).
\end{aligned}\]
Recall that for $u=(u_1,u_2)$ we have $\rho_i(u)=\rho_i(u_1,u_2)=iu_1+u_2$. We adapt some notation from Section~\ref{sec:filtrations}.
\begin{lemma}\label{lemma:intersections}
Let \[
\lambda_m(i)=\begin{cases}
	0 & i\leq -m,\\
	i+m &-m \leq i \leq 1,\\
	m+1 & i \geq  1.
\end{cases}
\]
Then $\dim V^i_m(u)=m+1-\lambda_m(iu_1+u_2)$. Furthermore, these spaces are maximally independent, so for $I\subset\{0,\ldots,n+1\}$ we have
\[\dim \bigcap_{i\in I} V^i_m(u) = \max\left\{0,m+1-\sum_{i\in I} \lambda_m(iu_1+u_2)\right\}.\]
\end{lemma}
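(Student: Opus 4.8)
The plan is to reduce everything to the one-variable computation of the filtration of $\hat S^m\Omega$ on a toric surface that was already carried out in Example~\ref{ex:Scotangent}. Concretely, the vector space is $V=V_{\hat S^m\Omega_Y^1}=S^m(M\otimes\k)$, which has dimension $m+1$, and the filtration attached to a ray $\rho$ is, after re-indexing, $V^\rho(i)=S^{i+m}(\rho^\perp)\cdot S^{-i}(M\otimes\k)$ for $-m\le i\le 0$, all of $V$ for $i\le -m$, and $0$ for $i\ge 1$. Since $\dim S^j(\rho^\perp)=1$ (as $\rho^\perp$ is a line in the rank-$2$ space $M\otimes\k$), one reads off $\dim V^\rho(i)=m+1-\lambda_m(i)$ with $\lambda_m$ as stated; plugging in $i=\rho_i(u)=iu_1+u_2$ gives the first assertion. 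I would spell out this dimension count in a sentence, noting the three cases of $\lambda_m$ correspond exactly to the three cases in Example~\ref{ex:Scotangent}.

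The substance is the ``maximal independence'' claim for the collection of subspaces $\{V^i_m(u)\}_{i\in I}$. The clean way to see this is to choose a basis of $V=S^m(M\otimes\k)$ adapted to \emph{all} the relevant lines simultaneously. Pick coordinates $x,y$ on $M\otimes\k$ so that $\{x^a y^{m-a}: 0\le a\le m\}$ is a basis of $V$; then $V^\rho(i)$ is spanned by a ``consecutive block'' of monomials, namely those $x^a y^{m-a}$ divisible by an appropriate power of a linear form $\ell_\rho$ cutting out $\rho^\perp$. More precisely, for a fixed ray $\rho$ the subspace $V^\rho(i)$ is $\ell_\rho^{\,\lambda_m(i)}\cdot S^{m-\lambda_m(i)}(M\otimes\k)$, which has codimension $\lambda_m(i)$. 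The key linear-algebra fact is: if $W_1,\dots,W_r\subset V$ with $W_t=\ell_t^{\,c_t}\cdot S^{m-c_t}(M\otimes\k)$ for \emph{pairwise non-proportional} linear forms $\ell_t$, then $\dim\bigcap_t W_t=\max\{0,\,m+1-\sum_t c_t\}$. I would prove this by observing that $\bigcap_t W_t$ consists of homogeneous degree-$m$ polynomials divisible by $\prod_t \ell_t^{\,c_t}$, a fixed polynomial of degree $\sum c_t$; such polynomials form a space of dimension $\max\{0,m+1-\sum c_t\}$ (if $\sum c_t\le m$ it is $\prod\ell_t^{c_t}\cdot S^{m-\sum c_t}$, and if $\sum c_t>m$ the only degree-$m$ multiple is $0$). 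Applying this with $\ell_t=\ell_{\rho_i}$, $c_t=\lambda_m(iu_1+u_2)$, and noting that the $\rho_i$ for $i=0,\dots,n+1$ are pairwise distinct rays hence have pairwise non-proportional normal forms, yields exactly the displayed formula.

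The one point that needs care — and I expect it to be the main (mild) obstacle — is checking that $V^\rho(i)$ really does have the shape $\ell_\rho^{\,\lambda_m(i)}\cdot S^{m-\lambda_m(i)}(M\otimes\k)$ uniformly in the three regimes of $\lambda_m$, and in particular that $\rho^\perp\subset M\otimes\k$ is cut out by a single linear form $\ell_\rho$ whose power behaves multiplicatively under the symmetric-power construction of Example~\ref{ex:Scotangent} (i.e.\ $V^\rho_{\hat S^m\Omega}(i)=\sum_{j_1+\dots+j_m=i}\prod V^\rho_{\hat\Omega}(j_t)$ collapses to a single term because each $V^\rho_{\hat\Omega}(j)$ is either $\rho^\perp$, the whole space, or $0$). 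Once this identification is in place, the divisibility argument is immediate and robust: it does not care which regime each individual $\lambda_m(iu_1+u_2)$ falls into, since the formula $\max\{0,m+1-\sum c_t\}$ is what it is. I would also remark that for $|I|\le 2$ this recovers the two formulas for $\dim V^\rho(i)$ and $\dim V^\rho(i)\cap V^{\rho'}(j)$ used in the proof of Lemma~\ref{lemma:delta}, so the statement is a clean common generalization.
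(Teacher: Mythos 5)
Your proposal is correct and follows essentially the same route as the paper: the dimension count is read off from Example~\ref{ex:Scotangent}, and the intersection formula comes from viewing $\bigoplus_m V_{S^m\Omega^1_Y}$ as a polynomial ring in two variables in which the linear forms spanning the $\rho_i^\perp$ are pairwise coprime, so that codimensions add (the paper states this coprimality/codimension-additivity argument tersely, while you spell it out via divisibility by $\prod_t \ell_t^{c_t}$). The only cosmetic caveat is that $\ell_\rho$ spans the line $\rho^\perp$ rather than ``cutting it out,'' and the regime $\lambda_m=m+1$ should be read as $V^\rho(\cdot)=0$, both of which your argument handles anyway.
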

\begin{proof}
The dimension result follows from Example~\ref{ex:Scotangent}. Furthermore, $\bigoplus_{m=0}^\infty V_{S^m\Omega_Y^1}$ is isomorphic to a bivariate polynomial ring in two variables, and the $\rho_i^\perp$ consist of linear forms that are pairwise coprime for different $i$. Hence, if the intersection of several of these spaces is not zero, then the codimension of the intersection is the sum of the codimensions of the spaces.
\end{proof}

We use Lemma~\ref{lemma:intersections}  to write
\[\chi^0\left(s_n,S^m\Omega^1_Y\right)=\sum_{u\in M} z_m(u)\]
with
\[
\begin{aligned}
z_m(u)&=\dim \frac{\H^0\left(Y\setminus E,S^m\Omega_Y^1\right)_u}{\H^0\left(Y,S^m\Omega_Y^1\right)_u}\\
&=\min\left\{\max\left\{0,(m+1-\lambda_m(u_2)-\lambda_m((n+1)u_1+u_2))\right\},\sum_{i=1}^n\lambda_m(iu_1+u_2)\right\}.
\end{aligned}
\]
\begin{lemma}\label{L:chi_0_as_pointcount}
With the definitions above, the set
\[\sG_{m,n}=\left\{(u_1,u_2,z)\in\RR^3 : 0 < z \leq z_m(u_1,u_2)\right\}\]
is a bounded half-open non-convex polytope and
\[\chi^0\left(s_n,S^m\Omega^1_Y\right)=\# \left(\sG_{m,n} \cap \ZZ^3\right).\]
Furthermore, $\sG_m$ is stable under the transformation $(u_1,u_2)\mapsto (-u_1,(n+1)u_1+u_2)$.
\end{lemma}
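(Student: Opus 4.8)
The statement has three parts: (i) $\sG_{m,n}$ is a bounded half-open non-convex polytope; (ii) $\chi^0(s_n,S^m\Omega^1_Y)=\#(\sG_{m,n}\cap\ZZ^3)$; and (iii) $\sG_{m,n}$ is stable under $\tau\colon(u_1,u_2)\mapsto(-u_1,(n+1)u_1+u_2)$. For (ii), I would argue that for each fixed $u=(u_1,u_2)\in M$, the number of integer points $z$ with $0<z\le z_m(u)$ is exactly $z_m(u)$, since $z_m(u)$ is a nonnegative integer (it is a min/max of nonnegative integers, as $\lambda_m$ takes integer values and $m+1$ is an integer). Summing over $u\in M$ and using the displayed formula $\chi^0(s_n,S^m\Omega^1_Y)=\sum_{u\in M}z_m(u)$ gives the point count, provided the sum is finite — which follows once we know $\sG_{m,n}$ is bounded.

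For (i), the key observation is that $z_m(u)$, as a function on $\RR^2$, is piecewise linear: $\lambda_m$ is piecewise linear in one variable with breakpoints at $-m$ and $1$, so each $\lambda_m(iu_1+u_2)$ is piecewise linear in $u$, and $z_m$ is built from these by affine combinations, $\max\{0,\cdot\}$, and $\min\{\cdot,\cdot\}$, all of which preserve piecewise-linearity and continuity. Hence $\sG_{m,n}=\{(u,z):0<z\le z_m(u)\}$ is, away from its boundary, a finite union of regions of the form $\{$polygon$\}\times\{0<z\le (\text{linear in }u)\}$, each of which is a (half-open) polytope; the whole thing is their union, hence a half-open polytope, and it is non-convex because $z_m$ is not concave. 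Boundedness: I would show $z_m(u)=0$ once $u$ is large. Concretely, if $u_2\le -m$ or $u_2\ge 1$ then $\lambda_m(u_2)\in\{0,m+1\}$; pushing this through, the factor $\min\{\max\{0,m+1-\lambda_m(u_2)-\lambda_m((n+1)u_1+u_2)\},\sum_{i=1}^n\lambda_m(iu_1+u_2)\}$ is forced to $0$ outside a bounded region of the $(u_1,u_2)$-plane — e.g. if $\max\{0,\dots\}$ is positive we need $\lambda_m(u_2)<m+1$ and $\lambda_m((n+1)u_1+u_2)<m+1$, i.e. $u_2<1$ and $(n+1)u_1+u_2<1$; and we need $\lambda_m(iu_1+u_2)>0$ for some $i\in\{1,\dots,n\}$, forcing $iu_1+u_2>-m$ for that $i$. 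These half-plane constraints cut out a bounded polygon in $(u_1,u_2)$, and on it $z\le z_m(u)\le m+1$, so $\sG_{m,n}$ is bounded. (This is essentially the same bookkeeping as in Lemma~\ref{lemma:linearity}, and one could even cite Figure~\ref{fig:linearity}-style reasoning, though here there are $n+2$ rays rather than three.)

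For (iii), note $\tau$ is exactly the $\SL(2,\ZZ)$ change of coordinates induced by the lattice automorphism fixing the fan $\Sigma$: geometrically $\tau$ corresponds to the symmetry of the $A_n$ resolution swapping $\rho_i\leftrightarrow\rho_{n+1-i}$. I would verify directly that $\rho_i(\tau(u))=(n+1-i)u_1+u_2=\rho_{n+1-i}(u)$ for all $i$. Then $\lambda_m(\rho_i(\tau u))=\lambda_m(\rho_{n+1-i}(u))$, so in the formula for $z_m(\tau u)$ the term $\lambda_m(u_2)+\lambda_m((n+1)u_1+u_2)$ is preserved (swap $i=0$ with $i=n+1$), and the sum $\sum_{i=1}^n\lambda_m(iu_1+u_2)$ is preserved (reindex $i\mapsto n+1-i$). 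Hence $z_m(\tau u)=z_m(u)$, which gives the claimed stability of $\sG_{m,n}$ (acting on $(u_1,u_2)$ and fixing $z$); in particular $\tau$ is an integral affine map preserving $\ZZ^3$, consistent with part (ii).

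\textbf{Main obstacle.} The routine-but-delicate part is the boundedness argument in (i): one must carefully case-split on which branch of each $\lambda_m$ and each $\max/\min$ is active and confirm that in every case where $z_m(u)>0$ the point $u$ lies in an explicitly bounded polygon. This is bookkeeping with $O(n)$ piecewise-linear pieces rather than a conceptual difficulty; the cleanest writeup is probably to observe that $z_m(u)>0$ forces both $u_2<1$, $(n+1)u_1+u_2<1$, and $iu_1+u_2>-m$ for some $1\le i\le n$, and then note that these (together with the implicit symmetry, or a companion lower bound coming from $\max\{0,m+1-\lambda_m(u_2)-\lambda_m((n+1)u_1+u_2)\}>0$ also giving $u_2>-m$ and $(n+1)u_1+u_2>-m$ type constraints) pin $u$ into a bounded set. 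The piecewise-linearity and the non-convexity claims are then immediate, and parts (ii) and (iii) are short.
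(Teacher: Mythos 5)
Your proposal is correct and follows essentially the same route as the paper's (very terse) proof: piecewise linearity of $z_m$ gives the polytope structure, vanishing of $z_m$ outside a bounded region gives boundedness, integrality of $z_m$ on $\ZZ^2$ converts $\sum_u z_m(u)$ into the lattice-point count, and the identity $z_m(u_1,u_2)=z_m(-u_1,(n+1)u_1+u_2)$ (via $\rho_i(\tau u)=\rho_{n+1-i}(u)$) gives the symmetry. One immaterial slip: positivity of $\max\{0,m+1-\lambda_m(u_2)-\lambda_m((n+1)u_1+u_2)\}$ does \emph{not} yield lower bounds like $u_2>-m$ (since $\lambda_m$ vanishes below $-m$), but this parenthetical is not needed — the three constraints you state ($u_2<1$, $(n+1)u_1+u_2<1$, and $iu_1+u_2>-m$ for some $1\le i\le n$) already force $-(m+1)<u_1<m+1$ and hence a bounded region.
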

\begin{proof}
It is straightforward to check that $z_m(u_1,u_2)$ is only non-zero on a bounded region, so $\sG_m$ is bounded. It is a (non-convex) polytope because $z_m(u_1,u_2)$ is piecewise linear. Since $z_m(u_1,u_2)$ takes integer values at $(u_1,u_2)\in\ZZ$, we have that the sum $\sum_{(u_1,u_2)\in\ZZ^2} z_m(u_1,u_2)$ is equal to the lattice point count given.

The symmetry is easily verified through the identity
\[z_m(u_1,u_2)=z_m(-u_1,(n+1)u_1+u_2)\qedhere\]
\end{proof}

In Section~\ref{sec42} we give an explicit description of the non-convex polytope $\sG_{m,n}$ as a dilation of a fixed non-convex polytope $\sG_{0,n}$ by a factor of $m+1$.

\subsection{Explicit description of the non-convex polytope $\boldsymbol{\sG_{m,n}}$}\label{sec42}

As it turns out, we get a nicer description of $\sG_{m,n}$ by shifting our coordinates: we set $(u_1,u_2)=(a,b+1)$. We absorb the shift in a new piecewise linear function $\lambda'_{m+1}$ defined by
\[
\lambda'_{m+1}(i)=\lambda_m(i+1)=\begin{cases}
	0 & i\leq -(m+1),\\
	i+m+1 &-(m+1) \leq i \leq 0,\\
	m+1 & i \geq  0.
\end{cases}
\]
We obtain descriptions
\[z_m(a,b)=\min\left\{\max\left\{0,(m+1-\lambda'_{m+1}(b)-\lambda'_{m+1}((n+1)a+b))\right\},\sum_{i=1}^n\lambda'_{m+1}(ia+b)\right\}\]
and
\[\sG_{m,n}=\left\{(a,b,z)\in\RR^3 : 0 < z \leq z_m(a,b)\right\}.\]
The symmetry of the non-convex polytope $\sG_{m,n}$ in these coordinates is under the same transformation $\tau_n=(a,b)\mapsto(-a,(n+1)a+b)$.

\begin{figure}
	\begin{tikzpicture}[xscale=3.5,yscale=1,pin distance=25]
		\coordinate [label=90:$P_0$](P0) at (-1,0);
		\node at (P0) {$\bullet$};
		\coordinate [label=90:$P_1$](P1) at (-1/2,0);
		\node at (P1) {$\bullet$};
		\coordinate [label=90:$P_2$](P2) at (-1/3,0);
		\node at (P2) {$\bullet$};
		\coordinate [label=80:$P_3$](P3) at (-1/4,0);
		\node at (P3) {$\bullet$};
		\coordinate [pin=-140:$Q_1$](Q1) at (-1/3,-1/3);
		\node at (Q1) {$\bullet$};
		\coordinate [pin=-130:$Q_2$](Q2) at (-1/6,-1/2);
		\node at (Q2) {$\bullet$};
		\coordinate [pin=-110:$Q_3$](Q3) at (-1/10,-3/5);
		\node at (Q3) {$\bullet$};
		\coordinate [label=0:$P'_0$](Pp0) at (1,-4);
		\node at (Pp0) {$\bullet$};
		\coordinate [label=-0:$P'_1$](Pp1) at (1/2,-2);
		\node at (Pp1) {$\bullet$};
		\coordinate [label=0:$P'_2$](Pp2) at (1/3,-4/3);
		\node at (Pp2) {$\bullet$};
		\coordinate [label=10:$P'_3$](Pp3) at (1/4,-1);
		\node at (Pp3) {$\bullet$};
		\coordinate [pin=-90:$Q_1'$](Qp1) at (1/3,-5/3);
		\node at (Qp1) {$\bullet$};
		\coordinate [pin=-90:$Q_2'$](Qp2) at (1/6,-7/6);
		\node at (Qp2) {$\bullet$};
		\coordinate [pin=-110:$Q_3'$](Qp3) at (1/10,-1);
		\node at (Qp3) {$\bullet$};
		\coordinate [label=-110:$Z$](Z) at (0,-1);
		\node at (Z) {$\bullet$};
		\draw (P0) -- (P3) -- (Pp3) -- (Pp0) -- (Z) -- cycle;
		\draw (P0) -- (Q1) -- (Q2) -- (Q3) -- (Qp3) -- (Qp2) -- (Qp1) -- (Pp0);
		\draw (P1) -- (Z);
		\draw (P2) -- (Z);
		\draw (P3) -- (Z);
		\draw (Pp1) -- (Z);
		\draw (Pp2) -- (Z);
		\draw (Pp3) -- (Z);
		\draw[->,dashed] (-1.2,0) -- (1.2,0);
		\draw[->,dashed] (0,-4.5) -- (0,0.5);
		\coordinate [label={$b$-axis}] (b_axis) at (0,0.5);
		\coordinate [label=0:{$a$-axis}] (a_axis) at (1.2,0);
		\draw [dotted] (0,-4)--(1,-4);
		\coordinate [label=180:{$b=-4$}] (btick) at (0,-4);
		\draw [dotted] (1,0)--(1,-4);
		\coordinate [label=90:{$a=1$}] (btick) at (1,0);
	\end{tikzpicture}

	\emph{Note:} The $a$-axis is stretched to ease viewing
	\caption{Top view of $\sG_{0,3}$}
	\label{fig:G03}
\end{figure}
Recall that in Section~\ref{sec:intro} we defined the points
\[
\begin{aligned}
P_i&=\left(-\frac{1}{i+1},0,0\right)\quad\text{for } i=0,1,\ldots,n,\\
Q_i&=\left(-\frac{2}{(i+1)(i+2)},-\frac{i}{i+2},\frac{i}{i+2}\right)\quad\text{for } i=0,1,\ldots,n,\\
Z&=(0,-1,0),\\
\end{aligned}
\]
along with the half-open convex polytopes
\begin{equation*}
\begin{aligned}
\sP_i&=\Conv\{P_{i-1},Q_{i-1},P_i,Q_i,Z\}\setminus \Conv\{P_i,Q_i,Z\}\setminus \Conv \{P_{i-1},P_i,Z\}, \\
\sC_n&=\Conv\{P_n,\tau_n(P_n),Q_n,\tau_n(Q_n),Z\}\setminus\Conv\{P_n,\tau_n(P_n),Z\} 
\end{aligned}
\end{equation*}
from \eqref{E:polyhedra}.
For reference we record
\[\begin{aligned}
	P_n'&=\tau_n(P_n)=\left(\frac{1}{n+1},-1,0\right),\\
	Q_n'&=\tau_n(Q_n)=\left(\frac{2}{(n+1)(n+2)},-1,\frac{n}{n+2}\right).
\end{aligned}
\]

\begin{lemma}\label{L:G_0n_convex_decomposition}
The non-convex polytope $\sG_{m,n}$ is the dilation by $m+1$ of\, $\sG_{0,n}$. Furthermore, we have
\[\sG_{0,n}=\sC_n\cup \bigcup_{i=1}^n \sP_i \cup \bigcup_{i=1}^n\tau_n(\sP_i).\]
\end{lemma}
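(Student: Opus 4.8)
The plan is to prove both statements of Lemma~\ref{L:G_0n_convex_decomposition} by directly analyzing the piecewise-linear function $z_m(a,b)$ in the shifted coordinates, where it is expressed in terms of $\lambda'_{m+1}$. First I would establish the dilation claim: since $\lambda'_{m+1}(i) = (m+1)\cdot \lambda'_{1}\!\bigl(i/(m+1)\bigr)$ whenever the argument is scaled appropriately, the function $z_m$ is homogeneous of degree $1$ in the sense that $z_m(a,b) = (m+1)\, z_0\!\bigl(a/(m+1), b/(m+1)\bigr)$ when we interpret $z_0$ as the piecewise-linear function built from $\lambda'_1$. Concretely, the three ingredients of $z_m$ — the inner $\max\{0,\,(m+1)-\lambda'_{m+1}(b)-\lambda'_{m+1}((n+1)a+b)\}$, the sum $\sum_{i=1}^n \lambda'_{m+1}(ia+b)$, and the outer $\min$ — are each positively homogeneous of degree $1$ under simultaneously scaling $(a,b,m+1)$. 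This immediately gives $\sG_{m,n} = (m+1)\cdot \sG_{0,n}$, where $\sG_{0,n}$ is defined via the limiting/normalized function; one should take a little care that the half-open structure ($0 < z \le z_m$) dilates correctly, but this is routine.

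The substantive part is identifying $\sG_{0,n}$ with the stated union $\sC_n\cup\bigcup_{i=1}^n\sP_i\cup\bigcup_{i=1}^n\tau_n(\sP_i)$. I would proceed by subdividing the $(a,b)$-plane into the regions of linearity of $z_0(a,b)$. The breakpoints come from the three piecewise-linear functions $\lambda'_1(b)$, $\lambda'_1((n+1)a+b)$, and each $\lambda'_1(ia+b)$ for $i=1,\dots,n$ (with kinks at argument $=0$ and argument $=-1$), together with the loci where the inner $\max$ switches sign and where the outer $\min$ switches branch. On each resulting linear cell, $\sG_{0,n}$ is the region below a single linear graph over a polygon, hence a polytope; I would compute the vertices of these graphs and check they are exactly the points $P_i, Q_i, Z$ and their $\tau_n$-images. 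The key structural observation to organize this: for $a<0$ near the $a$-axis, only finitely many of the linear forms $ia+b$ are ``active'' (nonzero contribution to the sum) in a given cell, and walking from $a=0$ toward $a=-1$ one successively turns on the constraints $i=1,2,\dots$; this is precisely the nested structure reflected in $\sP_i=\Conv\{P_{i-1},Q_{i-1},P_i,Q_i,Z\}\setminus\cdots$. The $\tau_n$-symmetry from Lemma~\ref{L:chi_0_as_pointcount} handles the region $a>0$, reducing the work to $a\le 0$, and the central piece $\sC_n$ (near $a=0$, where the inner $\max$ term rather than the sum is the binding constraint in the $\min$) must be matched separately using its given vertex description $\Conv\{P_n,\tau_n(P_n),Q_n,\tau_n(Q_n),Z\}\setminus\Conv\{P_n,\tau_n(P_n),Z\}$.

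To make the vertex bookkeeping tractable I would not enumerate all cells blindly but instead verify the claimed decomposition by a containment argument in both directions: (i) show each of $\sC_n$, $\sP_i$, $\tau_n(\sP_i)$ is contained in $\sG_{0,n}$ by checking that at each of their vertices the defining inequality $0<z\le z_0(a,b)$ holds with the right value, and that $z_0$ restricted to the relevant cell agrees with the linear function cutting out the top face; and (ii) show $\sG_{0,n}$ is contained in the union by checking that the union's pieces tile (up to the removed half-open faces) the full region where $z_0>0$ — for this it suffices that the polygons $\mathrm{proj}(\sP_i), \mathrm{proj}(\tau_n\sP_i), \mathrm{proj}(\sC_n)$ onto the $(a,b)$-plane cover $\{z_0>0\}$ with disjoint interiors, which one reads off from the vertex coordinates $P_i=(-1/(i+1),0,0)$ etc. The half-open faces (the sets removed by ``$\setminus\Conv\{P_i,Q_i,Z\}\setminus\Conv\{P_{i-1},P_i,Z\}$'' and the boundary $z=0$) are exactly arranged so that adjacent pieces $\sP_i, \sP_{i+1}$ share a face without double-counting and without gaps, matching the half-open structure $0<z\le z_0$; verifying this compatibility is the main obstacle, since it requires care about which facet each piece retains.

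I expect the genuinely hard step to be the explicit determination that the graph of $z_0$ over each linear cell has vertices precisely among $\{P_i,Q_i,Z\}\cup\tau_n(\{P_i,Q_i,Z\})$ — in particular computing the $Q_i$, which arise as the intersection of the ``turn-on'' hyperplane $ia+b=0$ (or $=-(m+1)$ before normalization) with the locus where the sum $\sum\lambda'$ equals the inner $\max$ expression, and verifying that the resulting point is $\bigl(-2/((i+1)(i+2)), -i/(i+2), i/(i+2)\bigr)$. Once these vertices are pinned down, the identification of $\sP_i$ and $\sC_n$ with the corresponding slices of $\sG_{0,n}$ is a finite check, and the union formula follows. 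I would organize the final write-up as: (1) homogeneity/dilation, (2) reduction to $a\le 0$ via $\tau_n$, (3) cell decomposition of $\{z_0>0\}\cap\{a\le 0\}$ and vertex computation, (4) matching cells to $\sP_i$ and $\sC_n$ including the half-open boundary bookkeeping.
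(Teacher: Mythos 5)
Your proposal is correct and follows essentially the same route as the paper: the dilation claim via the homogeneity $\lambda'_{m+1}((m+1)i)=(m+1)\lambda'_1(i)$, and the decomposition by identifying the linear faces of the graph of $z_0$ (with vertices among the $P_i,Q_i,Z$ and their $\tau_n$-images), using the $\tau_n$-symmetry to handle $a>0$ and checking $z_0$ vanishes outside. The paper's proof is simply a terser version of your steps (1)--(4), leaving the cell-by-cell and half-open boundary verification as a routine check.
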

\begin{proof}
The first claim follows by inspecting the definition of $z_m(a,b)$ and the fact that
\[\lambda'_{m+1}((m+1)i)=(m+1)\lambda'_1(i).\]
It remains to describe $\sG_{0,n}$.
The faces spanned by $\{P_{i-1},P_i,Q_{i-1},Q_i\}$ and $\{Q_{i-1},Q_{i},Z\}$ for $i=1,\ldots,n$
can be checked to be linear parts of the graph of $z_m(a,b)$. See Figure~\ref{fig:G03} for an illustration of the configuration for $n=3$. We define the points $P'_i=\tau_n(P_{i})$ and $Q'_i=\tau_n(Q_{i})$.

By symmetry we get that $\{P'_{i-1},P'_i,Q'_{i-1},Q'_i\}$ and $\{Q'_{i-1},Q'_{i},Z\}$ are also faces of the graph. We get two remaining faces $\{P_n,P'_n,Q_n,Q'_n\}$ and $\{Q_n,Q'_n,Z\}$, and outside these we have that $z_m(a,b)$ is identically zero. 
The description of $\sG_{0,n}$ follows.
\end{proof}
\subsection{Proof of Theorem \ref{T:chi0}}\label{S:proof_chi0}
Lemma~\ref{L:chi_0_as_pointcount} expresses $\chi^0(s_n,S^m\Omega^1_Y)$ as a lattice point count in the dilation by $m+1$ of $\sG_{0,n}$. Lemma~\ref{L:G_0n_convex_decomposition} expresses $\sG_{0,n}$ as a disjoint union of convex polytopes. The theorem follows directly from the volume and lattice point counts of those polytopes.

\subsection{Proof of Proposition \ref{P:chi0_asymptotic}}\label{S:proof_chi0_asymptotic}
We consider the half space $H=\{(a,b,z): a\leq 0\}$. It is straightforward to verify that $\sC_{n-1}\cap H \subset (\sP_{n}\cup \sC_{n})\cap H$, so it follows that $\sG_{0,n-1}\cap H \subset \sG_{0,n}\cap H$.

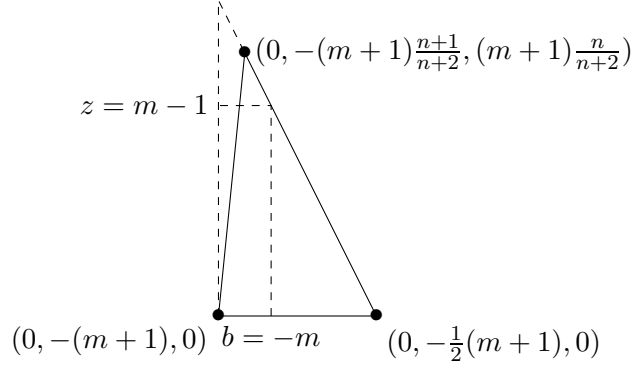
\begin{figure}
	\begin{tikzpicture}[scale=0.7]
		\coordinate [label=-130:{$(0,-(m+1),0)$}] (A) at (-6,0);
		\coordinate [label=-20:{$(0,-\frac{1}{2}(m+1),0)$}](B) at (-3,0);
		\coordinate [label=0:{$(0,-(m+1)\frac{n+1}{n+2},(m+1)\frac{n}{n+2})$}](C) at (-5.5,5);
		\draw [dashed](A) -- (-6,6) -- (C);
		\draw (A) -- (B) -- (C) -- cycle;
		\draw [dashed] (-5,0) -- (-5,4) -- (-6,4);
		\coordinate [label=-90:{$b=-m$}] (xtick) at (-5,0);
		\coordinate [label=-180:{$z=m-1$}] (xtick) at (-6,4);
		\node at (A) {$\bullet$};
		\node at (B) {$\bullet$};
		\node at (C) {$\bullet$};
	\end{tikzpicture}
	\caption{Intersection of $(m+1)\sC_n$ with $a=0$}
	\label{fig:triangle}
\end{figure}

\begin{enumerate}
\item First we note that lattice point counts are non-decreasing with increasing dilation, so $\chi^0(s_n,S^m\Omega^1_Y)$ is non-decreasing in $m$. Since $\sG_{n,m}=(\sG_{n,m}\cap H) \cup \tau_n(\sG_{n,m}\cap H)$ and $\tau_n(\ZZ^3)=\ZZ^3$, we see from the observation above that the lattice point count is also non-decreasing in $n$.
\item To establish that $\chi^0(s_n,S^m\Omega^1_Y)$ is constant in $n$ for $n>m$,  we observe that $(m+1)\sP_n$ does not contain lattice points since any point $(a,b,z)\in(m+1)\sP_n$ satisfies $\frac{m+1}{n+1}<a<0$.
  Similarly, any lattice points in $(m+1)\sC_n$ must have $a=0$ and lie in the triangle with vertices
  \[(0,-(m+1),0),\quad\left(0,-\frac{1}{2}(m+1),0\right),\quad\left(0,-(m+1)\frac{n+1}{n+2},(m+1)\frac{n}{n+2}\right).\]
See Figure~\ref{fig:triangle}.
Only the third point depends on $n$. It lies on the line $z=-2b-(m+1)$ and tends to $(0,-(m+1),(m+1))$ as $n\to\infty$.
Thus we see that the smallest $b$-coordinate of a lattice point has $b\geq -m$ and hence $z\leq m-1$. Such points are already contained in $\sC_n$ for $n=m-1$, so as $n$ grows beyond $m$, we see that $\sG_{n,m}\cap H$ does not gain more lattice points, and therefore the lattice point count in $\sG_{n,m}$ stabilizes for $n\geq m$ as well. 
\item 
A straightforward computation yields
\[\vol \sP_n=\frac{n^2+3n-2}{6n(n+1)^2(n+2)}.\]	
Since
\[\vol\sC_n=\frac{n(n+4)}{6(n+1)(n+2)^2}\]
tends to $0$ as $n\to\infty$, we see that the volume of $\sG_{0,n}$ tends to
\[\lim_{n\to\infty}\sG_{0,n}=2\sum_{n=1}^\infty \vol\sP_n=2\sum_{n=1}^\infty\frac{n^2+3n-2}{6n(n+1)^2(n+2)}=\tfrac{2}{9}\pi^2-2.\]
By Ehrhart theory, we have that $\#(m+1)\sG_{0,n}\cap\ZZ^3$ is a quasi-polynomial in $m$ of degree equal to $\dim \sG_{0,n}=3$, with leading coefficient equal to $\vol \sG_{0,n}$. The argument above establishes that $\vol \sG_{0,n}$ increases with $n$ and tends to $\tfrac{2}{9}\pi^2-2$. The statement follows.
\end{enumerate}

\section{Explicit computation of regular differentials}
\subsection{Setup}
Let $Y$ be a normal surface over $\k$ with function field $\k(Y)$. We write $\Omega_{\k(Y)/\k}$ for the $\k(Y)$-module of K\"ahler differentials.
For any open $U\subset Y$ we have an injection $\H^0(U,S^m\Omega^1_Y)\to S^m\Omega_{\k(Y)/\k}$. We represent a section by its corresponding K\"ahler differential.

The local rings $\sO_{Y,D}$ of prime divisors $D$ on $Y$ give rise to discrete valuations $\ord_D$ on $\k(Y)$. In this section we use the following notation. Given a prime divisor $D$ we choose non-constant functions $\pi,u\in\sO_{Y,D}$ such that $\ord_D(\pi)=1$ and $\ord_D(u)=0$. The differentials $d\pi,du$ form an $\sO_{Y,D}$-basis for $\Omega_{\sO_{Y,D}/\k}$ and therefore also a $\k(Y)$-basis for $\Omega_{\k(Y)/\k}$.

The natural homomorphism $S^m \Omega_{\sO_{Y,D}/\k}\to S^m \Omega_{\k(Y)/\k}$ is an injection, and its image is formed by the differentials that are regular at the generic point of $D$. We define $\ord_D(\omega)$ to be the largest integer $n$ such that $\pi^{-n}\omega \in \Omega_{\sO_{Y,D}/\k}$. 

For $\omega\in S^m\Omega_{\k(Y)/\k}$ we have
$\omega=f_0 (du)^m+f_1(du)^{m-1}d\pi+\cdots+f_m (d\pi)^m$ and
\[\ord_D \omega=\min\left\{\ord_D(f_i): i=0,\ldots,m\right\}.\]
We furthermore have a reduction homomorphism $\rho_D\colon S^m\Omega_{\sO_{Y,D}/\k}\to S^m\Omega_{\k(D)/\k}$ by reducing modulo $\pi$ and sending $d\pi$ to $0$.
\begin{definition}
Let $Y$ be as above, and let $\omega\in S^m\Omega_{\k(Y)/k}$ be non-zero. We say that a prime divisor $D\subset Y$ is a \emph{solution curve} to $\omega$ if $\rho(\pi^{-\ord_D(\omega)}\omega)=0$.
\end{definition}
In terms of the coordinates described above, $D$ is a solution curve to $\omega$ if and only if $\ord_D(f_0) > \ord_D(\omega)$.

\begin{proposition}\label{P:ord_solution_curve}
	Let $\psi\colon Z\to Y$ be a finite morphism of normal surfaces. Let $D\subset Y$ be a prime divisor, and let $D'\subset Z$ be a prime divisor above $D$ of ramification degree $e$. Suppose that $D$ is a solution curve to $\omega\in S^m\Omega_{\k(Y)/\k}$. Then
\[\ord_{D'}\psi^*\omega \geq e\ord_{D}\omega + (e-1).\]
\end{proposition}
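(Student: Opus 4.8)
The plan is to compute in local coordinates at $D$ and at $D'$ and to estimate $\psi^*\omega$ one monomial at a time. Write $r=\ord_D\omega$ and, following the setup, choose $\pi,u\in\sO_{Y,D}$ with $\ord_D(\pi)=1$ and $\ord_D(u)=0$, so that $\omega=f_0(du)^m+f_1(du)^{m-1}d\pi+\cdots+f_m(d\pi)^m$ with $\ord_D(f_i)\geq r$ for every $i$ and, since $D$ is a solution curve, $\ord_D(f_0)\geq r+1$. Likewise choose $\pi',u'\in\sO_{Z,D'}$ with $\ord_{D'}(\pi')=1$ and $\ord_{D'}(u')=0$; then $d\pi',du'$ form a $\k(Z)$-basis of $\Omega_{\k(Z)/\k}$, and for a $1$-form or a symmetric differential written in this basis $\ord_{D'}$ equals the minimum of the $\ord_{D'}$ of the coefficients. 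From this description one reads off $\ord_{D'}(\alpha\beta)\geq\ord_{D'}(\alpha)+\ord_{D'}(\beta)$ and $\ord_{D'}(g\alpha)=\ord_{D'}(g)+\ord_{D'}(\alpha)$ for scalars $g\in\k(Z)$, which I will use freely.

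Two facts then drive the argument. First, by definition of the ramification degree, $\ord_{D'}(\psi^*g)=e\,\ord_D(g)$ for all $g\in\k(Y)^\times$. Second, one needs the differential estimates
\[\ord_{D'}(\psi^*d\pi)\geq e-1\qquad\text{and}\qquad\ord_{D'}(\psi^*du)\geq 0.\]
The second is immediate: $\psi^*u\in\sO_{Z,D'}$ because $\ord_{D'}(\psi^*u)=e\cdot 0=0$, so $\psi^*du=d(\psi^*u)\in\Omega_{\sO_{Z,D'}/\k}$ has coefficients in $\sO_{Z,D'}$. For the first, set $t=\psi^*\pi$; then $\ord_{D'}(t)=e$, so $t=(\pi')^e w$ for a unit $w\in\sO_{Z,D'}$, and differentiating,
\[\psi^*d\pi=dt=e(\pi')^{e-1}w\,d\pi'+(\pi')^e\,dw.\]
Since $dw$ has coefficients in $\sO_{Z,D'}$, the $d\pi'$-coefficient of $\psi^*d\pi$ has $\ord_{D'}\geq e-1$ and the $du'$-coefficient has $\ord_{D'}\geq e$, whence $\ord_{D'}(\psi^*d\pi)\geq e-1$ (with equality when $e$ is invertible in $\k$).

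Finally I would expand $\psi^*\omega=\sum_{i=0}^m(\psi^*f_i)(\psi^*du)^{m-i}(\psi^*d\pi)^i$ and bound each summand. For $i\geq 1$ the $i$-th summand has $\ord_{D'}$ at least $e\,\ord_D(f_i)+i(e-1)\geq er+(e-1)$; for $i=0$ the summand $(\psi^*f_0)(\psi^*du)^m$ has $\ord_{D'}$ at least $e\,\ord_D(f_0)\geq e(r+1)=er+e\geq er+(e-1)$, and this is exactly the place where the solution-curve hypothesis is used. Taking the minimum over $i$ gives $\ord_{D'}(\psi^*\omega)\geq er+(e-1)=e\,\ord_D\omega+(e-1)$, as claimed. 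The only step that requires genuine computation rather than bookkeeping is the bound $\ord_{D'}(\psi^*d\pi)\geq e-1$: one must check that differentiating the ramified function $t=(\pi')^e w$ drops the order by at most one, which is precisely the observation that $d$ of an $e$-th power of a uniformizer has order $e-1$ (and order $\geq e$ when the characteristic divides $e$). Everything else is formal manipulation of coefficient valuations in the basis $d\pi',du'$.
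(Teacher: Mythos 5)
Your proof is correct and follows essentially the same route as the paper: write $\omega$ in the basis $(du)^{m-i}(d\pi)^i$, use $\psi^*\pi=(\pi')^e w$ to get $\ord_{D'}(\psi^*d\pi)\geq e-1$, and use the solution-curve condition to push the $i=0$ term up by $e$ rather than $e-1$. The only cosmetic difference is that the paper first scales $\omega$ to reduce to $\ord_D\omega=0$, whereas you track the general order $r$ directly; both are fine.
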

\begin{proof}
	The inequality is preserved under scaling $\omega$ by a non-zero element of $\k(Y)$, so it suffices to deal with the case $\ord_D(\omega)=0$. Let us take a uniformizer $\pi$ at $D$. Identifying $\k(Y)\subset \k(Z)$ via the pull-back $\psi^*$, we have that a uniformizer $\pi'$ at $D'$ is of the form $\pi=v(\pi')^e$ for some $v\in \k(Z)$ with $\ord_{D'}(v)=0$. We also choose a non-constant $u\in \k(Y)$ so that we have
\[\omega=f_0 (du)^m+f_1(du)^{m-1}d\pi+\cdots+f_m (d\pi)^m\]
with $\min \ord_D(f_i)=0$. The fact that $D$ is a solution curve means that $\ord_D(f_0)\geq 1$ and therefore $\ord_{D'}(f_0)\geq e$.

Note that $d \pi = d(v(\pi')^e) = (\pi')^e dv + ev(\pi')^{e-1}d\pi'$, so we have
\[\ord_{D'} \left(f_i (du)^{m-i}(d\pi)^i\right) \geq e\ord_D(f_i) + e-1 \quad\text{for }i=1,\ldots,m.\]
This implies that $\ord_{D'}(\psi^*\omega)\geq e-1$, as required.
\end{proof}

Let us now consider a normal surface $X$, with singular locus $S$ and  minimal resolution $Y$. Then $\k(X)$ and $\k(Y)$ are canonically isomorphic. 
Let $E\subset Y$ be the locus of $Y$ mapping to $S$ on $X$. Then $X\setminus S$ is isomorphic to $Y\setminus E$.

Because $S$ is of codimension $2$ in $X$, we can extend the sheaf $S^m\Omega^1_{X\setminus S}$ uniquely to a reflexive sheaf $\hat S^m\Omega^1_X$ on $X$, and its sections are completely determined by their behaviour on $X\setminus S$. Here too, we represent sections by the corresponding K\"ahler differentials: a section is regular on $X$ if it is regular at all divisors on~$Y$ that are not contained in $E$.

Now suppose that we have a differential $\omega$ that is regular on $X\setminus S$. If $s\in S$ is an $A_n$-singularity, then we can bound $\ord_{E_i}\omega$ at components above $s$ as well.

\begin{proposition}\label{P:ord_bound_quotient_singularity}
Let $X$ be a surface with an $A_n$-singularity $s$, let $Y$ be a minimal resolution of\, $X$, and let $E_i\subset Y$ be a prime divisor of\, $Y$ above $s$. Suppose that $\omega\in \H^0(X,\hat S^m\Omega_X^1)$. Then
\[\ord_{E_i} \omega \geq \left\lceil\frac{-mn}{n+1}\right \rceil.\]
\end{proposition}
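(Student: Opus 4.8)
The plan is to exploit the toric descriptions of $X$ and $Y$ together with Klyachko's cohomology machinery from Section~\ref{sec:klyachko}. We may take $X=X_{\sigma_{A_n}}$ and $Y=X_\Sigma$ as in Examples~\ref{ex:An} and~\ref{ex:Anres}, so that $\hat S^m\Omega^1_X$ and $S^m\Omega^1_Y$ are equivariant reflexive sheaves; by Example~\ref{ex:cotangent} the filtration data of $\hat S^m\Omega^1_X$ on the singular surface $X$ is still the one recorded in Example~\ref{ex:Scotangent}, since $\hat S^m\Omega^1_X$ agrees with $S^m\Omega^1_X$ on the smooth locus and any toric variety is smooth in codimension one. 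Any $\omega\in\H^0(X,\hat S^m\Omega^1_X)$ decomposes as a finite sum $\omega=\sum_{u\in M}\omega_u$ of $T$-eigencomponents, and since $\ord_{E_i}$ is a valuation on $S^m\Omega^1_{\k(Y)}$ one has $\ord_{E_i}\omega\ge\min_u\ord_{E_i}\omega_u$. So the first reduction is to prove the bound for each eigensection $\omega_u\in\H^0(X,\hat S^m\Omega^1_X)_u$ with $\omega_u\neq 0$ separately.

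Next I would translate everything into filtration data. Writing $\omega_u=x^{-u}z$ for the corresponding $z\in V=S^m(M\otimes\k)$, equation~\eqref{E:shifted_filtration} applied on $Y$ to the ray $\rho_i$ gives $\ord_{E_i}\omega_u=\ord_{D_{\rho_i}}(z)-\rho_i(u)$, while~\eqref{eqn:global} applied to the two rays $\rho_0,\rho_{n+1}$ of the fan of $X$ shows that regularity of $\omega_u$ on $X$ means $\ord_{D_{\rho_0}}(z)\ge\rho_0(u)$ and $\ord_{D_{\rho_{n+1}}}(z)\ge\rho_{n+1}(u)$. Now $z$ is a nonzero degree-$m$ form on the two-dimensional space $M\otimes\k$, hence a product of $m$ linear forms, and the $n+2$ lines $\rho_0^\perp,\dots,\rho_{n+1}^\perp$ are pairwise distinct; letting $d_j$ be the number of linear factors of $z$ proportional to $\rho_j^\perp$, we get $d_0+\dots+d_{n+1}\le m$ and, reading off the filtrations in Example~\ref{ex:Scotangent} (compare Lemma~\ref{lemma:intersections}), $\ord_{D_{\rho_j}}(z)=d_j-m$ for every $j$.

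The final step is a short convexity estimate. Since $\nu_{\rho_i}=\tfrac{n+1-i}{n+1}\nu_{\rho_0}+\tfrac{i}{n+1}\nu_{\rho_{n+1}}$, we have $\rho_i(u)=\tfrac{n+1-i}{n+1}\rho_0(u)+\tfrac{i}{n+1}\rho_{n+1}(u)$; substituting the two inequalities above and using $\tfrac{n+1-i}{n+1}+\tfrac{i}{n+1}=1$ yields
\[
\ord_{E_i}\omega_u=(d_i-m)-\rho_i(u)\ \ge\ d_i-\tfrac{n+1-i}{n+1}d_0-\tfrac{i}{n+1}d_{n+1}.
\]
Discarding $d_i\ge0$ and using $d_0,d_{n+1}\ge0$ with $d_0+d_{n+1}\le m$, the right-hand side is at least $-\max\{\tfrac{n+1-i}{n+1},\tfrac{i}{n+1}\}\,m$, which for $1\le i\le n$ is at least $-\tfrac{n}{n+1}m$; since $\ord_{E_i}\omega_u$ is an integer it is then at least $\lceil-mn/(n+1)\rceil$, which finishes the proof. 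I do not expect a serious obstacle here: the computation is essentially mechanical once it is set up, and the only points needing care are keeping the sign conventions in~\eqref{E:shifted_filtration} and~\eqref{eqn:global} consistent (harmless in any case, since the statement is symmetric under $u\mapsto-u$) and checking that passing to the reflexive hull on the singular surface $X$ does not change the relevant filtration data --- which is already handled by Example~\ref{ex:cotangent}.
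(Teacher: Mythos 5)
Your proof is correct and follows essentially the same route as the paper's: reduce to torus eigensections, encode regularity on $X$ via the Klyachko filtration data of $\hat S^m\Omega^1$ at the two boundary rays $\rho_0,\rho_{n+1}$, and exploit the convex combination $\nu_{\rho_i}=\tfrac{n+1-i}{n+1}\nu_{\rho_0}+\tfrac{i}{n+1}\nu_{\rho_{n+1}}$ to bound $\ord_{E_i}$, finishing with integrality. The only difference is bookkeeping: the paper turns regularity into inequalities on the weight $u$ alone (from $V^0_m(u)\cap V^{n+1}_m(u)\neq 0$) and then uses the bound $\ord_{E_i}\omega_u\geq -m-\rho_i(u)$, whereas you additionally track the multiplicities $d_j$ of the linear factors of the invariant part $z$, which sharpens an intermediate inequality but leads to the same conclusion.
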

\begin{proof}
We use the notation from Example \ref{ex:Anres}. Then $E_i=D_{\rho_i}$ for some $i\in\{1,\ldots,n\}$.
It suffices to show the claim for torus semi-invariant symmetric differentials, so let $\omega \in \H^0(X,\hat S^m\Omega^1_X)_u$ for some weight $u\in \ZZ^2$.
In the notation of Section~\ref{sec:chi0formula}, we have $V_m^0(u)\cap V_m^{n+1}(u)\neq 0$. By Lemma \ref{lemma:intersections} we have
\[
	\rho_0(u)\leq 0, \quad \rho_{n+1}(u)\leq 0,\quad \rho_0(u)+\rho_{n+1}(u)\leq -m.
\]
For the ray $\rho_i$, $1\leq i \leq n$, we have 
\[
	\rho_i(u)= \frac{1}{n+1}\left((n+1-i)\rho_0(u)+i\rho_{n+1}(u)\right).
\]
Since both $(n+1-i)$ and $i$ are at least $1$, it follows that $\rho_i(u) \leq -m/(n+1)$, or equivalently,
\[
	j=\rho_i(u)+\left \lceil \frac{m\cdot (-n)}{n+1}\right \rceil \leq -m.
\]
From Example~\ref{ex:Scotangent} it follows that $\omega\in x^u V^{\rho_i}(j)$ and hence by \eqref{E:shifted_filtration} that
\[\ord_{D_{\rho_i}}(\omega)\geq j-\rho_i(u)=\left \lceil \frac{m\cdot (-n)}{n+1}\right\rceil.\qedhere\]
\end{proof}

\subsection{Labs surfaces}\label{S:labs}
We consider surfaces in $\PP^3$ from Labs \cite[Corollary~A1]{Labs2006} of degree $d=2k$, constructed by taking a smooth plane quadric $X_1\subset\PP^3$ tangent to the four coordinate planes and pulling it back to a surface $X_k$ under the map $(\xi_0:\xi_1:\xi_2:\xi_3)\mapsto(\xi_0^k:\xi_1^k:\xi_2^k:\xi_3^k)$. An explicit choice of model yields
\[X_k\colon \xi_0^{2k}+\xi_1^{2k}+\xi_2^{2k}+\xi_3^{2k}
-\xi_0^k\xi_1^k-\xi_0^k\xi_2^k-\xi_0^k\xi_3^k-\xi_1^k\xi_2^k-\xi_1^k\xi_3^k-\xi_2^k\xi_3^k=0.\]
We write $S$ for the singular locus of $X_k$. It consists of $4k^2$ isolated singularities of type $A_{k-1}$. The singularities lie in the four planes $\xi_0=0,\ldots,\xi_3=0$. Writing $\zeta_k$ for a primitive $k$\textsuperscript{th} root of unity, the singularities with $\xi_3=0$ have coordinates
\[(1:\zeta_k^i:\zeta_k^j:0) \quad\text{for } i,j=0,\ldots,k-1.\] 

We observe that $X_1$ is a non-singular quadric and that we have a finite morphism $\phi_k\colon X_k\to X_1$ defined by $(\xi_0:\xi_1:\xi_2:\xi_3)\mapsto(\xi_0^k:\xi_1^k:\xi_2^k:\xi_3^k)$ of degree $k^3$ and branch locus $\xi_0\xi_1\xi_2\xi_3=0$, of ramification degree $k$ over each of those plane sections.

Writing $\zeta_3$ for a primitive third root of unity, we have that $X_1$ is isomorphic to $\PP^1\times\PP^1$ over a field containing $\zeta_3$. In terms of affine coordinates $(s_0:s_1)\times(t_0:t_1)$, we can express the isomorphism as
\[\begin{aligned}
\xi_0&=3s_0t_0,\\
\xi_1&=s_1t_1+(\zeta_3+2)s_1t_0-(\zeta_3-1)s_0t_1+3s_0t_0,\\
\xi_2&=s_1t_1+(2\zeta_3+1)s_1t_0-(2\zeta_3+1)s_0t_1+3s_0t_0,\\
\xi_3&=s_1t_1.
\end{aligned}\]
We note that the plane $\xi_3=0$ is tangent to $X_1$ and hence that it intersects $X_1$ in two lines $L_{3,1}, L_{3,2}$. By symmetry, the same holds for the other coordinate planes $\xi_0=0$, $\xi_1=0$, $\xi_2=0$, for which we adopt the same notation.

We pass to affine coordinates $(x_1,x_2,x_3)=(\xi_1/\xi_0,\xi_2/\xi_0,\xi_3/\xi_0)$ on $\PP^3$ and
$(s,t)=(s_1/s_0,t_1/t_0)$ on $\PP^1\times \PP^1$. We obtain
\[\begin{aligned}
3dx_1&=(t+\zeta_3+2)ds+(s-\zeta_3+1)dt,\\
3dx_2&=(t+2\zeta_3+1)ds+(s-2\zeta_3-1)dt,\\
3dx_3&=tds+sdt.
\end{aligned}\]
We consider the degree $2$ differential $dsdt$ on $\PP^1\times\PP^1$, which under the isomorphism above yields
\[\omega_1 = \frac{-3}{x_1^2+x_2^2+1-2x_1x_2-2x_1-2x_2}\left(x_2(dx_1)^2+(1-x_1-x_2)dx_1dx_2+x_1(dx_2)^2\right).\]
We record a few facts about $\omega_1$.
\begin{lemma}\label{L:solutions_X1}
The differential $\omega_1$ has $\ord_{L_{0,1}}\omega_1=\ord_{L_{0,2}}\omega_1=-2$ and is regular elsewhere. Furthermore, the solution curves to $\omega_1$ are exactly the lines constituting the two rulings on $X_1$.
\end{lemma}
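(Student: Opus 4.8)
The plan is to transport everything to $X_1\cong\PP^1\times\PP^1$, on which $\omega_1$ is by construction the image of the symmetric differential $ds\,dt$; write $p_1,p_2$ for the two projections. For the order statement, I would observe that $\Omega^1_{\PP^1\times\PP^1}=p_1^*\Omega^1_{\PP^1}\oplus p_2^*\Omega^1_{\PP^1}$, so the line bundle $\sL:=p_1^*\Omega^1_{\PP^1}\otimes p_2^*\Omega^1_{\PP^1}$ is a direct summand of $S^2\Omega^1_{\PP^1\times\PP^1}$ and $\omega_1=p_1^*(ds)\cdot p_2^*(dt)$ is a rational section of $\sL$; the order of such a section along a prime divisor is the same whether computed in $\sL$ or in $S^2\Omega^1$. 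On $\PP^1$ the rational section $ds$ of $\Omega^1_{\PP^1}$ has no zeros and a double pole at $\infty$, so $\div(ds)=-2[\infty]$, and likewise for $dt$; hence $\div(\omega_1)=-2\{s=\infty\}-2\{t=\infty\}$. The identification $\{s=\infty\}\cup\{t=\infty\}=X_1\cap\{\xi_0=0\}=L_{0,1}\cup L_{0,2}$ is immediate from $\xi_0=3s_0t_0$. This gives $\ord_{L_{0,1}}\omega_1=\ord_{L_{0,2}}\omega_1=-2$ and, for every other prime divisor $D$, $\ord_D\omega_1=0$.

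Next I would show that every line of either ruling is a solution curve. By the symmetry interchanging the two factors it suffices to treat $D=\{s=c\}$ for $c\in\PP^1$. Choose a uniformizer $\pi=s-c$ along $D$ (or $\pi=1/s$ if $c=\infty$) and the transverse function $u=t$, which has $\ord_D(u)=0$. Then $\omega_1=ds\,dt=g\,d\pi\,du$ with $g$ a unit at $D$ (or $g=-\pi^{-2}$ if $c=\infty$), so in the expansion $\omega_1=f_0(du)^2+f_1\,du\,d\pi+f_2(d\pi)^2$ we have $f_0=0$. Since $D$ is a solution curve precisely when $\ord_D(f_0)>\ord_D\omega_1$, and $\ord_D(0)=+\infty$, every such $D$ is a solution curve; the same computation with $s$ and $t$ interchanged handles the lines $\{t=c\}$.

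For the converse, let $D$ be a prime divisor on $X_1$ that is not a line of either ruling. Then $p_1$ and $p_2$ restrict to non-constant maps $D\to\PP^1$, so $s|_D,t|_D\in\k(D)$ are non-constant. By the first step $\ord_D\omega_1=0$, so $\rho_D$ may be applied directly to $\omega_1$; as $\rho_D$ is induced by the $\k$-algebra surjection $\sO_{X_1,D}\to\k(D)$ it commutes with $d$, so $\rho_D(\omega_1)=\rho_D(ds\,dt)=d(s|_D)\cdot d(t|_D)$ in $S^2\Omega^1_{\k(D)/\k}$. Since $\k$ has characteristic $0$, a non-constant element of $\k(D)$ has nonzero differential in the one-dimensional $\k(D)$-vector space $\Omega^1_{\k(D)/\k}$, and $S^2$ of a one-dimensional vector space is a domain; hence $\rho_D(\omega_1)\neq0$ and $D$ is not a solution curve. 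Combining the three steps shows that the solution curves of $\omega_1$ are exactly the lines of the two rulings (the two poles $L_{0,1},L_{0,2}$ being themselves ruling lines, one from each ruling, the statement is internally consistent).

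The only point needing a little care, rather than a genuine obstacle, is the pair $L_{0,1},L_{0,2}$, where $\omega_1$ has a pole: the solution-curve test there requires first clearing the pole, but since the $(du)^m$-coefficient vanishes identically this is harmless. The remaining verifications — that $ds$ is nowhere vanishing with a double pole at $\infty$, that $\{s=\infty\}$ and $\{t=\infty\}$ are the two lines cut out on $X_1$ by $\xi_0=0$, and that the reduction map $\rho_D$ commutes with $d$ — are routine from the explicit description of $X_1$ and the standard formalism of Kähler differentials.
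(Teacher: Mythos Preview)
Your proof is correct and follows essentially the same approach as the paper: both transport the question to $\PP^1\times\PP^1$ via the explicit isomorphism and analyze $ds\,dt$ there. The paper's own proof is just a few lines asserting that the pole orders and solution curves are ``easy'' and ``straightforward'' to determine on $\PP^1\times\PP^1$; you have simply filled in those details carefully, including the converse direction via the nonvanishing of $d(s|_D)\cdot d(t|_D)$ and the handling of the pole at the two lines $L_{0,i}$.
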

\begin{proof}
On $\PP^1\times\PP^1$ we easily see that $dsdt$ has double poles at $s=\infty$ and $t=\infty$ and nowhere else. Furthermore, it is straightforward to check that the two rulings on $\PP^1\times\PP^1$ 
form exactly the solution curves of $dsdt=0$.
The statement on $X_1$ follows simply by applying the isomorphism $\PP^1\times\PP^1\to X_1$.
\end{proof}

We next consider $\phi_k\colon X_k\to X_1$. The inverse images of the lines $L_{0,i},L_{1,i},L_{2,i},L_{3,i}$ are prime divisors $D_{0,i},D_{1,i},D_{2,i},D_{3,i}$. These are degree $k$ Fermat curves, as one can see from the factorization
\[\xi_0^{2k}+\xi_1^{2k}+\xi_2^{2k}-\xi_0^k\xi_1^k-\xi_0^k\xi_2^k-\xi_1^k\xi_2^k=\left(\xi_0^k+\zeta_3 \xi_1^k+\zeta_3^2\xi_2^k\right)\left(\xi_0^k+\zeta_3^2 \xi_1^k+\zeta_3\xi_2^k\right).\]

We consider the pull-back $\omega_k=\phi_k^*\omega_1$ to $X_k$.

\begin{lemma}\label{L:S2hat_global_section}
For $D=D_{0,i}$ we have $\ord_{D}\omega_k \geq -k-1$, and $\omega_k$, as a section of\, $\hat S^2\Omega^1_{X_k}$, is regular elsewhere. Furthermore, for $D=D_{1,i},D_{2,i},D_{3,i}$ we have $\ord_{D}\omega_k \geq k-1$.
	
As a result, for $k\geq 2$ we have that 
$(x_1x_2x_3)^{1-k}\omega_k$
is a global section of $\hat S^2\Omega^1_{X_k}$, and for $k\geq 4$ we have that
$\tilde\omega_k=(x_1x_2x_3)^{2-k}\omega_k$
is a global section that vanishes identically on $\xi_0\xi_1\xi_2\xi_3=0$.
\end{lemma}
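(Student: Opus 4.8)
The plan is to derive every order bound in the statement from Proposition~\ref{P:ord_solution_curve} applied to the finite morphism $\phi_k\colon X_k\to X_1$ of normal surfaces, and then to convert these bounds into the two global-section assertions by bookkeeping the orders of $x_1,x_2,x_3$ along the eight Fermat curves $D_{j,i}$.

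First I would record the geometry of $\phi_k$: it is étale over $X_1\setminus\{\xi_0\xi_1\xi_2\xi_3=0\}$, and over each ruling line $L_{j,i}\subset\{\xi_j=0\}\cap X_1$ it is ramified of degree $e=k$ with $\phi_k^{-1}(L_{j,i})=D_{j,i}$ a single prime divisor. By Lemma~\ref{L:solutions_X1} the poles of $\omega_1$ are exactly $L_{0,1},L_{0,2}$, each of order $-2$, and every ruling line — in particular every $L_{j,i}$ — is a solution curve to $\omega_1$. Proposition~\ref{P:ord_solution_curve} then gives immediately
\[\ord_{D_{0,i}}\omega_k\geq k\cdot(-2)+(k-1)=-k-1\qquad\text{and}\qquad\ord_{D_{j,i}}\omega_k\geq k\cdot 0+(k-1)=k-1\ \ (j=1,2,3).\]
For any prime divisor $D$ of $X_k$ other than the $D_{j,i}$, the finite image $\phi_k(D)$ is a prime divisor of $X_1$ distinct from every $L_{j,i}$, so $\phi_k$ is étale at the generic point of $D$ and $\omega_1$ is regular there; hence $\omega_k$ is regular at $D$. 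Since a section of $\hat S^2\Omega^1_{X_k}$ is regular exactly when it is regular along every prime divisor of $X_k$ (equivalently, along every prime divisor of $Y$ not contained in $E$), this proves the first two assertions.

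Next I would compute the orders of $x_1x_2x_3=\xi_1\xi_2\xi_3/\xi_0^3$ along the $D_{j,i}$. Each $D_{j,i}$ lies in $\{\xi_j=0\}\cap X_k$ with multiplicity one and in no other coordinate hyperplane, so $\ord_{D_{0,i}}(x_l)=-1$ for $l=1,2,3$, whence $\ord_{D_{0,i}}(x_1x_2x_3)=-3$, while $\ord_{D_{j,i}}(x_1x_2x_3)=1$ for $j=1,2,3$; on every prime divisor lying in no coordinate hyperplane, $x_1x_2x_3$ is a unit. Combining with the bounds on $\omega_k$: for $k\geq 2$ we get $\ord_{D_{0,i}}\big((x_1x_2x_3)^{1-k}\omega_k\big)\geq 3(k-1)-(k+1)=2k-4\geq 0$, $\ord_{D_{j,i}}\big((x_1x_2x_3)^{1-k}\omega_k\big)\geq (1-k)+(k-1)=0$ for $j\neq 0$, and order $\geq 0$ along all remaining divisors, so $(x_1x_2x_3)^{1-k}\omega_k$ is a global section of $\hat S^2\Omega^1_{X_k}$. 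The identical computation for $\tilde\omega_k=(x_1x_2x_3)^{2-k}\omega_k$ yields order $\geq 2k-7$ along $D_{0,i}$ and order $\geq 1$ along each $D_{j,i}$ with $j\neq 0$; hence for $k\geq 4$ it is a global section of $\hat S^2\Omega^1_{X_k}$ with strictly positive order along every component of $\{\xi_0\xi_1\xi_2\xi_3=0\}\cap X_k$, i.e. it vanishes identically on $\xi_0\xi_1\xi_2\xi_3=0$.

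The step needing the most care is the very first one: Proposition~\ref{P:ord_solution_curve} must be applied with $D$ equal to a \emph{polar} line $L_{0,i}$ of $\omega_1$, so what matters is that $L_{0,i}$ is still a solution curve, so that the ramification contributes the extra $+(e-1)$ even at the poles; without it one would only get $\ord_{D_{0,i}}\omega_k\geq -2k$, which is too negative for $(x_1x_2x_3)^{1-k}$ to fix. Thus the two facts to verify carefully are that the tangent-plane lines $L_{j,i}$ are genuine ruling lines (hence solution curves by Lemma~\ref{L:solutions_X1}) and that the ramification of $\phi_k$ along them is of degree exactly $k$; everything afterwards is routine manipulation of orders.
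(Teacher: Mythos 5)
Your proposal is correct and follows the paper's own argument: the order bounds come from Proposition~\ref{P:ord_solution_curve} applied to $\phi_k$, using that the ruling lines $L_{j,i}$ are solution curves of $\omega_1$ (Lemma~\ref{L:solutions_X1}) with ramification degree $k$, and the global-section and vanishing claims then follow from the fact that each $\xi_j$ vanishes to order one along its curves $D_{j,i}$. You simply spell out the order bookkeeping (e.g.\ $2k-4\geq 0$ and $2k-7\geq 1$) that the paper leaves implicit.
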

\begin{proof}
The curves mentioned in the lemma lie over solution curves for $\omega_1$ with ramification degree $k$. The first claims are a direct application of Proposition~\ref{P:ord_solution_curve}.

The second part is just the observation that $\xi_0,\xi_1,\xi_2,\xi_3$ vanish to the first order on their respective curves.
\end{proof}

\begin{lemma}\label{L:genus_of_solution_curves}
	The solution curves of $\tilde\omega_k$ contained in $\xi_0\xi_1\xi_2\xi_3=0$ are degree $k$ non-singular plane curves and hence of genus $(k-1)(k-2)/2$. The other solution curves are non-singular complete intersections of two degree $k$ surfaces and hence curves of genus $k^3-2k^2+1$.
\end{lemma}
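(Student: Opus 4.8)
The plan is to pull everything back to the smooth quadric $X_1$ via the finite map $\phi_k\colon X_k\to X_1$, whose solution curves are already determined by Lemma~\ref{L:solutions_X1}. First I would note that the set of solution curves of a symmetric differential is unchanged under multiplication by a nonzero rational function: scaling $\omega$ by $g$ scales its leading coefficient $f_0$ and $\omega$ itself by the same factor, so the defining inequality $\ord_D(f_0)>\ord_D(\omega)$ is unaffected. Hence $\tilde\omega_k$ and $\omega_k=\phi_k^*\omega_1$ have the same solution curves.

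Next I would show that if a prime divisor $C\subset X_k$ is a solution curve of $\phi_k^*\omega_1$, then $D:=\phi_k(C)$, a curve since $\phi_k$ is finite, is a solution curve of $\omega_1$. The key point is that for any uniformizer $\pi$ at $D$ the reduction homomorphism $\rho_C$ annihilates $\phi_k^*(d\pi)$: writing $\phi_k^*\pi=v(\pi')^e$ for a uniformizer $\pi'$ at $C$ and a unit $v$, each summand of $d(\phi_k^*\pi)=(\pi')^e\,dv+ev(\pi')^{e-1}\,d\pi'$ carries either a positive power of $\pi'$ or the factor $d\pi'$, both of which $\rho_C$ kills. Since $\rho_C$ is a ring homomorphism on the symmetric algebra and $\rho_C(\phi_k^*(du))=d\,\overline{\phi_k^*u}$, it follows that $\rho_C\circ\phi_k^*=\iota\circ\rho_D$, where $\iota\colon\k(D)\hookrightarrow\k(C)$ is the residue-field inclusion, which is injective on symmetric differentials because $\mathrm{char}\,\k=0$. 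Now rescale $\omega_1$ (which, as above, changes nothing) so that $\ord_D(\omega_1)=0$; if $D$ were not a solution curve then $f_0$ would be a unit at $D$, hence $\phi_k^*f_0$ a unit at $C$, so the $(du)^m$-coefficient of $\phi_k^*\omega_1$ — which is $\phi_k^*f_0$ plus terms of positive order — is a unit at $C$, whence $\ord_C(\phi_k^*\omega_1)=0$ while $\rho_C(\phi_k^*\omega_1)=\iota(\rho_D(\omega_1))\neq0$, contradicting that $C$ is a solution curve. By Lemma~\ref{L:solutions_X1} this shows every solution curve of $\tilde\omega_k$ is an irreducible component of $\phi_k^{-1}(L)$ for some line $L$ of one of the two rulings on $X_1\cong\PP^1\times\PP^1$, lying in $\xi_0\xi_1\xi_2\xi_3=0$ exactly when $L$ does.

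It remains to describe $\phi_k^{-1}(L)$. Since $X_1=\{Q=0\}$ for $Q(\xi)=\sum_i\xi_i^2-\sum_{i<j}\xi_i\xi_j$ and $X_k=\{Q(\xi_0^k,\dots,\xi_3^k)=0\}$, writing $L=\{\ell_1=\ell_2=0\}$ for two planes and using $Q\in(\ell_1,\ell_2)$ gives $\phi_k^{-1}(L)=\{\ell_1(\xi_0^k,\dots,\xi_3^k)=\ell_2(\xi_0^k,\dots,\xi_3^k)=0\}$, a complete intersection of two degree-$k$ surfaces contained in $X_k$. If $L$ lies in a coordinate plane, say $L=L_{j,i}\subset\{\xi_j=0\}$, then $\phi_k^{-1}(L)$ is the curve $D_{j,i}\subset\{\xi_j=0\}\cong\PP^2$ cut out by one of the twisted Fermat factors (e.g.\ $\xi_0^k+\zeta_3\xi_1^k+\zeta_3^2\xi_2^k$) from the factorization of the Labs equation restricted to $\{\xi_j=0\}$; all of its coefficients are nonzero, so by the Jacobian criterion $D_{j,i}$ is a nonsingular plane curve of degree $k$, of genus $(k-1)(k-2)/2$. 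If $L$ lies in no coordinate plane I claim $\phi_k^{-1}(L)$ is nonsingular; a smooth complete intersection curve is connected, hence irreducible, so then $C=\phi_k^{-1}(L)$ is a nonsingular complete intersection of two degree-$k$ surfaces, of genus $1+\tfrac{1}{2}k^2(2k-4)=k^3-2k^2+1$. Together with the preceding paragraph, this proves the lemma.

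The hard part will be the nonsingularity of $\phi_k^{-1}(L)$ when $L$ meets no coordinate plane in a line. Since $\phi_k$ is étale off the branch divisor $\xi_0\xi_1\xi_2\xi_3=0$, the preimage $\phi_k^{-1}(L)$ is already smooth over $L$ minus the finitely many points where $L$ meets that divisor. The geometric input I would isolate is that such an $L$ avoids every coordinate line $\{\xi_i=\xi_j=0\}$: each ruling of the smooth quadric $X_1$ contains exactly one line in each of the four coordinate planes, namely the eight lines $L_{j,i}$, and a point of $\{\xi_i=\xi_j=0\}\cap X_1$ lies on one such line in each of the two planes, which come from different rulings since no coordinate line lies on $X_1$; hence every ruling line through such a point is one of the $L_{j,i}$. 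Consequently $L$ meets the branch divisor transversally in distinct points, each on a single coordinate hyperplane, and $\phi_k^{-1}(L)$ avoids the singular locus of $X_k$, which maps under $\phi_k$ onto $(1:1:1:0)$ and its permutations. Near such a branch point $\phi_k$ has, on the smooth surface $X_k$, the local normal form $(s,t)\mapsto(s^k,t)$ with $s$ vanishing on the branch curve, so the preimage of a line transverse to $\{s=0\}$ is the smooth graph $t=\mathrm{const}+s^k/c+\cdots$; the transversality used here is precisely what the previous step guarantees. One should also dispatch the routine point that $\ell_1,\ell_2$ can be chosen with all coefficients nonzero — possible precisely because $L$ lies in no coordinate plane — so that both degree-$k$ hypersurfaces are reduced and $\phi_k^{-1}(L)$ is genuinely a complete intersection curve.
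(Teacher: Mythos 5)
Your proposal is correct, but it proves the crucial nonsingularity statement by a genuinely different route than the paper. The paper works with the explicit parametrization of $X_1\simeq\PP^1\times\PP^1$: a ruling line is the kernel of an explicit $2\times 4$ matrix $A(s)$, so the corresponding curve on $X_k$ is cut out by the two forms $\sum_j A_{i,j}\xi_j^k$, and nonsingularity follows from the Jacobian criterion because the $2\times 2$ minors of $A(s)$ vanish only at the special parameter values whose lines lie in the coordinate planes (those preimages being the plane Fermat curves $D_{j,i}$). You instead argue via the geometry of the cover: solution curves upstairs map to solution curves downstairs (your identity $\rho_C\circ\phi_k^*=\iota\circ\rho_D$ — this fills in a step the paper takes for granted), $\phi_k$ is \'etale off $\xi_0\xi_1\xi_2\xi_3=0$, and at the tamely ramified points the local normal form $(s,t)\mapsto(s^k,t)$ together with transversality of a non-special ruling line to the branch curve gives smoothness of the scheme-theoretic preimage, which coincides with the complete intersection $\{\ell_1(\xi^k)=\ell_2(\xi^k)=0\}$; connectedness of complete intersections then gives irreducibility. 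Both approaches are sound; yours trades the paper's minor computation for incidence geometry of lines on the quadric, at the cost of having to justify the local normal form and the identification of the fibre product with the complete intersection, which you do.

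One sentence needs tightening: your ``consequently'' is justified only for points where two coordinates vanish, whereas the images of the singular points of $X_k$ are the tangency points $(1{:}1{:}1{:}0)$ and its permutations, where a single coordinate vanishes (and where the branch curve has a node). You need the separate, equally easy, observation that the only ruling lines through such a tangency point are the two lines $L_{j,1},L_{j,2}$ in that tangent plane, so a ruling line not contained in a coordinate plane avoids these points as well; this is what guarantees both that $\phi_k^{-1}(L)$ misses the singular locus of $X_k$ and that every branch point met by $L$ lies on exactly one of the eight lines, where your transversality and normal-form argument applies.
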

\begin{proof}
By Lemma~\ref{L:solutions_X1} we see that the solution curves arise as fibres of the composition $X^k\to X_1\to \PP^1$ induced by the projections from $X_1\simeq \PP^1\times\PP^1$ onto either of the factors.

Let us first consider the projection onto the first factor. The fibre over the point $(1:s)$ can be expressed as an intersection of planes on $X_1$. Computation shows it is the kernel of the matrix
\[A=\begin{pmatrix}
-(\frac{1}{3}(\zeta_3+2)s^2+s)&s&0&-(s-\zeta_3+1)\\
-(\frac{1}{3}(2\zeta_3+1)s^2+s)&0&s&-(s-2\zeta_3-1)\\
\end{pmatrix},\]
so the corresponding solution curve on $X_k$ is described by
\[
\begin{cases}
A_{1,0}\xi_0^k+A_{1,1}\xi_1^k+A_{1,2}\xi_2^k+A_{1,3}\xi_3^k=0,\\
A_{2,0}\xi_0^k+A_{2,1}\xi_1^k+A_{2,2}\xi_2^k+A_{2,3}\xi_3^k=0.
\end{cases}
\]
Using the Jacobian criterion, any singular point must have an appropriate mixture of vanishing of homogeneous coordinates and minors of $A$.
However, those minors only consist of factors $s,(s-\zeta_3+1),(s-2\zeta_3-1)$, which lead to the curves $D_{1,i},D_{2,i},D_{3,i}$ contained in $\xi_1\xi_2\xi_3=0$. For $s=\infty$ we obtain the curves $D_{0,i}$ contained in $\xi_0=0$.

The other ruling consists of fibres over points $(1:t)$ on the second factor and behaves symmetrically to this one.
\end{proof}

\subsection{Proof of Theorem~\ref{T:labs_hyperbolic}}\label{S:labs_hyperbolic}
Let $Y_k\to X_k$ be a minimal desingularization of $X_k$. By Lemma~\ref{L:S2hat_global_section} we have that $\tilde\omega_k$ is a regular section of $\hat S^2\Omega_X^1$. We consider the pull-back of $\tilde\omega_k$ to $Y_k$, and we also denote it by $\tilde\omega_k$. From Lemma~\ref{L:S2hat_global_section} we obtain that $\tilde\omega_k$ is regular outside any prime divisor $D$ of $Y_k$ above a singularity $s$ of $X_k$. In fact, since $s$ lies on $\xi_0\xi_1\xi_2\xi_3$, which is contained in the vanishing locus of $\tilde\omega_k$, we see that $\tilde\omega_k=f\omega'$ for some differential $\omega'$ regular around $s$ and function $f$ vanishing at $s$. We identify $f$ with its pull-back to $Y_k$ and conclude that $\ord_D(f)\geq 1$.

The singularities on $X_k$ are of type $A_{k-1}$, so Proposition~\ref{P:ord_bound_quotient_singularity} yields that $\ord_D \omega' \geq \lceil \frac{2(1-k)}{k}\rceil =-1$.  It follows that $\ord_D(\tilde\omega_k)=\ord_D(f)+\ord_D(\omega')\geq 0$, so $\tilde\omega_k$ is regular everywhere on $Y_k$. 

For a curve $D\subset Y_k$ we obtain a pull-back $S^m\Omega^1_Y\to S^m\Omega^1_D$ that preserves regularity. Hence, $\tilde\omega_k$ pulls back to a regular form on $D$. On a genus $0$ curve such forms must vanish, so such a $D$ must be a solution curve to $\tilde\omega_k$. Apart from the exceptional components above singularities, any such curve is a pull-back of a solution curve to $\tilde\omega_k$ on $X_k$. By Lemma~\ref{L:genus_of_solution_curves} none of these are of genus $0$.

For $k>5$ we have more freedom: from valuations we can conclude that \[\eta=(a_0+a_1x_1+a_2x_2+a_3x_3)\tilde\omega_k\] represents a regular differential on $Y_k$ that vanishes on $a_0\xi_0+a_1\xi_1+a_2\xi_2+a_3\xi_3=0$. 

For a putative genus $1$ curve $C$ on $X_k$, one can then choose a plane 
$a_0\xi_0+a_1\xi_1+a_2\xi_2+a_3\xi_3=0$ that intersects $C$ transversally. Reducing to $C$ would yield a regular degree $2$ differential on $C$ that additionally has zeros on $C$. But then $\eta$ must reduce to $0$ on $C$; \textit{i.e.}~$C$ is a solution curve to $\eta$. Since $\eta$ is a scaling of $\tilde\omega_k$ by an element in $\k(Y)$, the two differentials have the same solution curves. No curves on $Y_k$ above singularities of $X_k$ can be of genus $1$, so $C$ would need to be a pull-back of a solution curve to $\tilde\omega_k$ on $X_k$. Again, by Lemma~\ref{L:genus_of_solution_curves} such curves do not have genus $1$.

\renewcommand\thesection{\Alph{section}}
\setcounter{section}{0}

\section*{Appendix. Ehrhardt generating functions}
\addcontentsline{toc}{section}{Appendix. Ehrhardt generating functions}
\refstepcounter{section}
\setcounter{subsection}{1}

We give the generating functions of the lattice point counts in dilations $(m+1)\sP_n$ for $n=1,\ldots,5$.
{\scriptsize
\[
\def\arraystretch{2}
\begin{array}{c|l}
n&\textrm{Generating function }\sum_{t=0}^\infty L(\sP_n,m+1)t^m\\
\hline
1 & \frac{t^{3}}{{\left(t^{2} + t + 1\right)} {\left(t + 1\right)} {\left(t - 1\right)}^{4}} \\
2 & \frac{{\left(t^{4} + t^{2} - t + 1\right)} t^{2}}{{\left(t^{2} + t + 1\right)}^{2} {\left(t^{2} - t + 1\right)} {\left(t + 1\right)} {\left(t - 1\right)}^{4}} \\
3 & \frac{{\left(t^{11} + t^{9} + t^{8} + t^{7} + t^{6} + t^{4} + t^{2} + 1\right)} t^{3}}{{\left(t^{4} + t^{3} + t^{2} + t + 1\right)} {\left(t^{4} - t^{3} + t^{2} - t + 1\right)} {\left(t^{2} + t + 1\right)} {\left(t^{2} - t + 1\right)} {\left(t^{2} + 1\right)} {\left(t + 1\right)}^{2} {\left(t - 1\right)}^{4}} \\
4 & \frac{{\left(t^{18} + t^{16} + t^{14} + t^{13} + t^{12} + t^{11} + t^{10} + t^{9} + t^{7} + t^{5} + t^{4} + t^{2} + 1\right)} t^{4}}{{\left(t^{8} - t^{7} + t^{5} - t^{4} + t^{3} - t + 1\right)} {\left(t^{4} + t^{3} + t^{2} + t + 1\right)}^{2} {\left(t^{4} - t^{3} + t^{2} - t + 1\right)} {\left(t^{2} + t + 1\right)} {\left(t^{2} + 1\right)} {\left(t + 1\right)} {\left(t - 1\right)}^{4}} \\
5 & \frac{{\left(t^{28} + t^{25} + t^{23} + t^{22} + t^{20} + t^{19} + t^{18} + t^{17} + t^{16} + t^{15} + t^{14} + t^{12} + t^{11} + t^{9} + t^{8} + t^{6} + t^{5} + t^{3} + 1\right)} t^{5}}{{\left(t^{12} - t^{11} + t^{9} - t^{8} + t^{6} - t^{4} + t^{3} - t + 1\right)} {\left(t^{8} - t^{7} + t^{5} - t^{4} + t^{3} - t + 1\right)} {\left(t^{6} + t^{5} + t^{4} + t^{3} + t^{2} + t + 1\right)} {\left(t^{4} + t^{3} + t^{2} + t + 1\right)} {\left(t^{2} + t + 1\right)}^{2} {\left(t^{2} - t + 1\right)} {\left(t + 1\right)} {\left(t - 1\right)}^{4}} \\
\end{array}
\]
}
We also give the generating functions of the lattice point counts in dilations $(m+1)\sC_n$:
{\scriptsize\[\def\arraystretch{2}
\begin{array}{c|l}
n&\textrm{Generating function }\sum_{t=0}^\infty L(\sC_n,m+1)t^m\\
\hline
1 & \frac{{\left(t^{4} + t^{3} + 2 \, t^{2} + 3 \, t + 3\right)} t^{2}}{{\left(t^{2} + t + 1\right)}^{2} {\left(t + 1\right)}^{2} {\left(t - 1\right)}^{4}} \\
2 & \frac{{\left(t^{4} - t^{2} + 2 \, t + 1\right)} t^{2}}{{\left(t^{2} + t + 1\right)} {\left(t^{2} - t + 1\right)} {\left(t + 1\right)}^{2} {\left(t - 1\right)}^{4}} \\
3 & \frac{{\left(t^{12} + t^{10} + 2 \, t^{8} + 2 \, t^{6} + 2 \, t^{5} + 2 \, t^{4} + 3 \, t^{2} + 1\right)} {\left(t^{2} + t + 1\right)} t^{2}}{{\left(t^{4} + t^{3} + t^{2} + t + 1\right)}^{2} {\left(t^{4} - t^{3} + t^{2} - t + 1\right)} {\left(t^{2} + 1\right)} {\left(t + 1\right)}^{2} {\left(t - 1\right)}^{4}} \\
4 & \frac{{\left(t^{9} + t^{7} - t^{6} + t^{3} + t^{2} + 1\right)} {\left(t^{4} - t^{3} + t^{2} - t + 1\right)} {\left(t + 1\right)} t^{2}}{{\left(t^{8} - t^{7} + t^{5} - t^{4} + t^{3} - t + 1\right)} {\left(t^{4} + t^{3} + t^{2} + t + 1\right)} {\left(t^{2} + t + 1\right)}^{2} {\left(t - 1\right)}^{4}} \\
5 & \frac{{\left(t^{24} + t^{21} + 2 \, t^{18} + 2 \, t^{15} + 2 \, t^{13} + 2 \, t^{12} - 2 \, t^{11} + 2 \, t^{10} + 2 \, t^{9} + 2 \, t^{7} + 2 \, t^{4} + t^{3} + 1\right)} {\left(t^{4} + t^{3} + t^{2} + t + 1\right)} t^{2}}{{\left(t^{12} - t^{11} + t^{9} - t^{8} + t^{6} - t^{4} + t^{3} - t + 1\right)} {\left(t^{6} + t^{5} + t^{4} + t^{3} + t^{2} + t + 1\right)}^{2} {\left(t^{2} + t + 1\right)} {\left(t^{2} - t + 1\right)} {\left(t + 1\right)}^{2} {\left(t - 1\right)}^{4}} \\
\end{array}
\]
}
As an example, for $n=2$ we get the generating function
\begin{align*}
\sum_{m=1}^\infty \chi^0(s_2,S^m\Omega_Y)t^m&=
 2\left(
\frac{t^{3}}{{\left(t^{2} + t + 1\right)} {\left(t + 1\right)} {\left(t - 1\right)}^{4}}
+ \frac{{\left(t^{4} + t^{2} - t + 1\right)} t^{2}}{{\left(t^{2} + t + 1\right)}^{2} {\left(t^{2} - t + 1\right)} {\left(t + 1\right)} {\left(t - 1\right)}^{4}}
\right)\\
&\hphantom{=}\; +\frac{{\left(t^{4} - t^{2} + 2 \, t + 1\right)} t^{2}}{{\left(t^{2} + t + 1\right)} {\left(t^{2} - t + 1\right)} {\left(t + 1\right)}^{2} {\left(t - 1\right)}^{4}}\\
&=3t^2 + 8t^3 + 15t^4 + 28t^5 + O(t^6).
\end{align*}
See the ancillary files \cite{BIXanc} provided with this article for machine-readable representations of the corresponding quasi-polynomials, as well as sample Sage code for generating this data.

%%%%%%%%%%%%%%%%%%%%%
% References
%%%%%%%%%%%%%%%%%%%%%

\end{document}